\theoremstyle{plain}
  \newtheorem{theorem}{\bf Theorem}[section]
  \newtheorem{proposition}[theorem]{\bf Proposition}
  \newtheorem{lemma}[theorem]{\bf Lemma}
\theoremstyle{remark}
  \newtheorem{remark}[theorem]{\bf Remark}
  \numberwithin{equation}{section}
\newcommand{\RR}{\mathbb{R}} 
\newcommand{\SP}{\mathbb{S}} 
\newcommand{\norm}[1]{\lVert#1\rVert} 
\newcommand{\supp}{\mathrm{supp \hspace{.1 mm}}} 
\newcommand{\jp}[1]{<#1>} 
\newcommand{\pv}{{\it P.V.}} 
\newcommand{\ds}[1]{d\sigma_{#1k}} 
\begin{document}


\title[Recovery of the singularities of a potential]{Fixed angle scattering: Recovery of  singularities and its limitations }
\author{Cristóbal J.  Meroño}
\thanks{Cristóbal J.  Meroño, Departamento de Matemáticas, Universidad Autónoma de Madrid. E-mail: cristobal.meronno@uam.es}
\date{\today}


\begin{abstract}
We prove that in dimension $n \ge 2$ the main singularities of a complex potential $q$ having a certain a priori regularity are contained  in the Born approximation $q_\theta$ constructed from fixed angle scattering data. Moreover, ${q-q_\theta}$ can be up to one derivative more regular than $q$ in the Sobolev scale. In fact, this result is optimal, we construct a family of compactly supported and radial potentials for which it is not possible  to have more than one derivative gain. Also, these functions show that for $n>3$, the maximum derivative gain can be very small for potentials  in the Sobolev scale not having a certain a priori level of regularity which grows with the dimension.
\end{abstract}


\maketitle


\section{Introduction and main results}

The central problem in inverse scattering for the Schrödinger equation is to recover a potential $q(x)$, $x \in \RR^n$, from the scattering data, the so-called scattering amplitude $u_\infty$. The scattering amplitude measures the far field response of the Hamiltonian $H:=-\Delta + q$ to incident plane waves. 
In applications to physics one of the  most used methods  in scattering is to construct from the data the Born approximation of the potential, essentially a linear approximation to the inverse problem. There  are several natural ways to construct a Born approximation, and they differ on how the scattering data are used. The main examples are related to the backscattering problem, the fixed angle scattering problem and  the full data scattering problem, which we will introduce in detail in the next section.  

 As the name suggests, the Born approximation  is a good approximation for potentials satisfying certain smallness conditions. But, motivated by applications, an interesting question from a mathematical point of view  is to establish how much and what kind of information does it contain about a potential $q(x)$ which does not necessarily satisfy any smallness condition.  In \cite{PSo} the full data Born approximation  was introduced and they showed that in  dimension three it must contain the leading singularities of $q(x)$. Since then, this approach has received a great amount of attention in all the three different scattering problems mentioned. Due to its  radial symmetry properties the most studied cases are the backscattering problem (see, among others \cite{OPS,RV,BM09,Re,RRe,BFRV13} and \cite{GU,Esw} for a different approach) and the full data scattering problem (see \cite{PSo,PSe,PSS} for real potentials and \cite{BFRV10} for complex potentials). In the case of  fixed angle scattering we mention \cite{R} for results in dimension $n \ge 2$, \cite{ser} in $n=2$, and \cite{BFPRM1} where the techniques introduced in \cite{R} are applied to fixed angle scattering in elasticity.

Usually the singularities are measured in the Sobolev, Hölder or integrability scales. In this work we are going to use the Sobolev scale with integrability exponent $p=2$, see \cite{R,BFRV10} for results in more general Sobolev spaces. If $\jp{x} = (1+|x|^2)^{1/2}$ and $\alpha \in \RR$, we introduce  the (Bessel) fractional derivative operator $\jp{D}^{\alpha}$ given by the Fourier symbol $\jp{\xi}^\alpha$, and the weighted Sobolev spaces
\[W_\delta^{\alpha,p}(\RR^n) := \{ f\in \mathcal{S}' : \norm{\jp{\cdot}^\delta \jp{D}^{\alpha} f}_{L^p(\RR^n)} <\infty\}, \]
for $\delta \in \RR$. We say that $f\in W^{\alpha,p}_{loc}(\RR^n)$ if $\phi f\in W^{\alpha,p}(\RR^n)$ for every $\phi \in C^\infty_c(\RR^n)$, also we use the notation $L_\delta^p(\RR^n) := W^{0,p}_\delta(\RR^n)$.

 As we shall see in the next section, the Born approximations $q_\theta$ of fixed angle scattering and the Born approximation $q_F$ of full data scattering are related to the potential through the Born series expansion, respectively
\begin{equation*} 
 {q}_\theta  \sim  {q} + \sum_{j=2}^{\infty}{Q_{\theta,j}(q)}, \hspace{3mm} \text{and} \hspace{3mm}  {q}_F  \sim  {q} + \sum_{j=2}^{\infty}{Q_{F,j}(q)},
 \end{equation*}
where $Q_{\theta,j}(q)$ and $Q_{F,j}(q)$, are certain multilinear operators  describing the multiple dispersion of waves (we use the $\sim$ symbol to avoid claiming anything about convergence yet).  We will call the $Q_{\theta,2}$ operator the double dispersion operator of fixed angle scattering (and analogously in full data scattering). The $\theta$ subindex in the notation is due to the fact that in fixed angle scattering the plane waves have a fixed direction of propagation $\theta \in \SP^{n-1}$. We can  now introduce the main  theorems in this work
\begin{theorem} \label{teo.1}
Let $n\ge 2$ and $\beta \ge 0$. Assume that at least one of the statements $q-q_{\theta} \in W_{loc}^{\alpha,2}(\RR^n)$ or $q-q_{F} \in W_{loc}^{\alpha,2}(\RR^n)$  holds for every $q\in W^{\beta,2}(\RR^n)$ compactly supported, radial, and real. Then $\alpha$ necessarily satisfies,
\begin{equation*}
\alpha \le  \begin{cases}
 \,\, 2\beta - (n-4)/2, \hspace{4 mm} if \hspace{16mm} m \le \beta < (n-2)/2,\\
 \, \, \beta + 1, \hspace{18.5mm} if \hspace{4mm} (n-2)/2 \le \beta<\infty ,
         \end{cases} 
\end{equation*}
\begin{equation} \label{eq.m}
where \quad  m = (n-4)/2 + 2/(n+1).
\end{equation}
\end{theorem}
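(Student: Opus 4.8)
Being sharpness statements, these are proved by exhibiting, for each $\beta$ in the indicated range, one radial, real, compactly supported potential that saturates them. Write $q-q_\theta\sim-Q_{\theta,2}(q)+R_\theta(q)$ with $R_\theta(q)=-\sum_{j\ge3}Q_{\theta,j}(q)$, and similarly $q-q_F\sim-Q_{F,2}(q)+R_F(q)$. The estimates established in the course of proving the recovery result show that, for $q\in W^{\beta,2}(\RR^n)$ compactly supported, the series defining $R_\theta(q)$ and $R_F(q)$ converge in $W_{loc}^{\alpha_0+\delta,2}(\RR^n)$ for some $\delta>0$, where $\alpha_0=\alpha_0(\beta,n)$ denotes the right hand side of the theorem. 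Consequently, for any $\alpha>\alpha_0$, a compactly supported radial real $q\in W^{\beta,2}$ can satisfy $q-q_\theta\in W_{loc}^{\alpha,2}$ (resp. $q-q_F\in W_{loc}^{\alpha,2}$) only if $Q_{\theta,2}(q)\in W_{loc}^{\gamma,2}$ (resp. $Q_{F,2}(q)\in W_{loc}^{\gamma,2}$) for some $\gamma>\alpha_0$. Hence the entire obstruction is carried by the double dispersion operators, and it suffices to construct a single compactly supported radial real $q\in W^{\beta,2}$ with $Q_{\theta,2}(q)\notin W_{loc}^{\gamma,2}$ and $Q_{F,2}(q)\notin W_{loc}^{\gamma,2}$ for every $\gamma>\alpha_0$.

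For the potential I would take a radial profile with a single singularity across a sphere — for instance $q(x)=\psi(|x|)\,(1-|x|)_+^s$ with $\psi\in C_c^\infty$, $\psi\equiv1$ near $|x|=1$, or the characteristic function of a ball when $\beta$ is small — with the exponent $s$ (and, in the borderline case, an additional logarithmic factor) calibrated so that $q$ lies exactly at the edge of $W^{\beta,2}$. Stationary phase over the sphere gives the model asymptotics $\widehat q(\xi)=a(\hat\xi)\,|\xi|^{-n/2-\beta}e^{i|\xi|}+\overline{a(\hat\xi)}\,|\xi|^{-n/2-\beta}e^{-i|\xi|}+(\text{lower order})$, with $a$ nowhere vanishing. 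Recalling that the double dispersion operator is given, up to a harmless normalization, by
\[
\widehat{Q_{\theta,2}(q)}(\xi)=-\int_{\RR^n}\frac{\widehat q(\xi-\mu)\,\widehat q(\mu)}{|\mu|^2+2\,k(\xi)\,\theta\cdot\mu-i0}\,d\mu,\qquad k(\xi)=-\frac{|\xi|^2}{2\,\xi\cdot\theta},
\]
and that $\widehat{Q_{F,2}(q)}(\xi)$ is the same integral with denominator $|\mu-\xi/2|^2-|\xi|^2/4-i0$ — which is radial in $\xi$ since $q$ is — it suffices to bound the decay of these integrals from below on a set of $\xi$ of positive measure; for $Q_{\theta,2}$ one restricts to a narrow cone of directions in which $\xi\cdot\theta<0$ and $k(\xi)\sim|\xi|$, and there the two integrals in fact coincide.

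The core of the proof is the large-$|\xi|$ asymptotics of this oscillatory integral once the model form of $\widehat q$ is inserted. The resolvent denominator vanishes on a sphere $S_{k(\xi)}$ through $0$ and $\xi$, and after rescaling $\mu=|\xi|\nu$ the phases $\pm|\nu|\pm|\hat\xi-\nu|$ have, for the relevant sign combinations, the whole segment from $0$ to $\hat\xi$ as critical set. I would split the integral into: the \emph{local} regions $|\mu|=O(1)$ and $|\xi-\mu|=O(1)$, where the rescaling breaks down and $\widehat q$ must be kept in its genuine form, smooth and nonzero near the origin; the \emph{glancing} region near $S_{k(\xi)}$, where the $-i0$ contributes, besides a principal value, the restriction of $\widehat q(\xi-\mu)\widehat q(\mu)$ to the sphere, reduced by stationary phase in the angular variables; and the remaining oscillatory region, controlled by degenerate stationary phase along the segment. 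Summing, the slowest-decaying piece wins, and translating into Sobolev regularity via $\int_{\mathrm{cone}}|\xi|^{2\gamma}\,|\widehat{Q_{\theta,2}(q)}(\xi)|^2\,d\xi$ one finds the critical exponent to be $\alpha_0=\beta+1$ when the local regions dominate, and $\alpha_0=2\beta-(n-4)/2$ when the glancing contribution dominates. The transition takes place exactly at $\beta=(n-2)/2$: this is the value at which the small-$\mu$ model $|\mu|^{-n/2-\beta}$ of $\widehat q$ ceases to be integrable against the surface measure of $S_{k(\xi)}$ near the point $\mu=0$ (and, symmetrically, $\xi-\mu=0$), so that for $\beta\ge(n-2)/2$ the genuine value $\widehat q(0)$ takes over there and enforces the one-derivative gain, whereas for $m\le\beta<(n-2)/2$ the singular behaviour on the sphere is what determines the rate. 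The further restriction $\beta\ge m$, with $m=(n-4)/2+2/(n+1)$, is precisely the range in which the angular stationary phase on $S_{k(\xi)}$ keeps its endpoint power singularities integrable and produces the clean rate $2\beta-(n-4)/2$; below $m$ a more singular regime takes over for which the gain is strictly smaller, which is the limitation described in the introduction. Once the sharp lower bound on $|\widehat{Q_{\theta,2}(q)}|$ over the cone, and its radial analogue for $Q_{F,2}$, are established, one observes that the obstruction is a genuine singularity of these functions across a fixed sphere and therefore survives multiplication by any cutoff equal to $1$ near it; hence $Q_{\theta,2}(q),\,Q_{F,2}(q)\notin W_{loc}^{\gamma,2}$ for every $\gamma>\alpha_0$, and the theorem follows from the first paragraph.

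The step I expect to be the main obstacle is precisely this oscillatory-integral analysis: correctly isolating the competing contributions of the local, glancing and oscillatory regions together with the powers of $|\xi|$ each of them produces; carrying out the degenerate stationary phase along the segment $[0,\hat\xi]$ and the angular stationary phase on $S_{k(\xi)}$ while tracking the endpoint power singularities; and, above all, verifying that the leading coefficient is not identically zero — which typically forces a careful choice of the cutoff $\psi$ (and of the cone of directions) in order to exclude an accidental cancellation between the $e^{i|\xi|}$ and $e^{-i|\xi|}$ components of $\widehat q$ and between the principal value and the $i0$ parts of the resolvent. A secondary technical point is to make the remainder estimate of the first paragraph uniform enough to be summable in $j$ while still strictly beating $\alpha_0$; this is where the a priori regularity $q\in W^{\beta,2}$ and, in the low-regularity range, the hypothesis $\beta\ge m$ are used once more.
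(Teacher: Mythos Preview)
Your reduction step has a real gap. You claim that the remainder $R_\theta(q)=\sum_{j\ge 3}\widetilde Q_{\theta,j}(q)$ lies in $W_{loc}^{\alpha_0+\delta,2}$ for some $\delta>0$, so that the obstruction is carried entirely by $Q_{\theta,2}$. But the available estimates (Proposition~\ref{prop.Qj}) only give $\widetilde Q_{\theta,3}(q)\in W^{\alpha,2}$ for $\alpha<\alpha_3$, and a direct computation shows $\alpha_3-\alpha_0=-\beta/n<0$ throughout the range $m\le\beta<(n-2)/2$ (and in fact for $\beta$ somewhat beyond $(n-2)/2$). So for low $\beta$ your reduction fails: the third term is, as far as we know, \emph{less} regular than $\alpha_0$, and you cannot isolate $Q_{\theta,2}$ this way. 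The paper circumvents this by a scaling argument: apply the hypothesis not to a single $q$ but to the family $\lambda q$, $\lambda\in(0,1)$. The finite part $\sum_{j=2}^{l-1}\lambda^j\widetilde Q_{\theta,j}(q)$ then lies in $W^{\alpha,2}_{loc}$ for every $\lambda$ (after choosing $l$ large enough that only the tail, not the third term, needs to beat $\alpha$), and a Vandermonde/linear-independence argument in the quotient $\mathcal S'/W^{\alpha,2}_{loc}$ forces each $\widetilde Q_{\theta,j}(q)$ individually into $W^{\alpha,2}_{loc}$. This uses the ``for all $q$'' hypothesis in an essential way.

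Your counterexample route is also much harder than necessary. The paper avoids oscillatory integrals, stationary phase, glancing regions, and the cancellation worries you anticipate, by choosing $g_\beta$ with \emph{nonnegative} Fourier transform: take $g_\beta=\phi\, G_\beta$ where $\widehat{G_\beta}(\xi)=\jp{\xi}^{-n/2-\beta}$ and $\phi=\psi*\psi$ for a real radial $\psi\in C_c^\infty$, so $\widehat\phi=\widehat\psi^2\ge 0$ and $\widehat{g_\beta}\ge 0$ with $\widehat{g_\beta}(\xi)\ge C\jp{\xi}^{-n/2-\beta}$ for large $|\xi|$. Then the imaginary part of $\widehat{Q_{\theta,2}(g_\beta)}$ is exactly the spherical integral $\pi S_\theta(g_\beta)(\eta)$, which is manifestly nonnegative, and one gets the lower bound $S_\theta(g_\beta)(\eta)\ge C\max(\jp{\eta}^{-\beta-n/2-1},\jp{\eta}^{-2\beta-2})$ on a cone simply by restricting the Ewald-sphere integral to two subsets (a small cap near $0$ and the complement of two small balls). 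No phases, no cancellation issues; the two competing rates and the crossover at $\beta=(n-2)/2$ fall out of these two trivial restrictions. The positivity also makes the passage to $W^{\alpha,2}_{loc}$ immediate.
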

As far as we know, this is the first time that  necessary conditions  are given for the regularity of $q-q_\theta$ and $q-q_F$.  As a consequence, it has been established that in general it is not possible to have more than one derivative gain in the Sobolev scale. In fact, in the worst case $\beta=m$ we have $(\alpha -\beta) \le 2/(n+1)$ which goes to zero as $n$ grows (see Figure \ref{fig.dim3yn}). This is true even if we consider only radial, real and compactly supported potentials. We remark that for $n \ge 4$ and $0 \le \beta < m$, it is not even known if the (high frequency) Born series converges in both  scattering problems (see Proposition \ref{prop.Qj} below). 
We have also the following positive results.
\begin{theorem}[Recovery of singularities] \label{teo.recovery} 
Assume that $q \in W^{\beta,2}(\RR^n)$  is a  compactly supported function. Then $q-q_{\theta} \in W^{\alpha,2}(\RR^n)$,  modulo a $C^\infty$ function, if $0 \le \beta <\infty$ and the following conditions hold,
\begin{equation*}  
 \alpha < \begin{cases}
 \,\, 2\beta - (n-3)/2, \hspace{4 mm} if \hspace{4mm} (n-3)/2 <\beta < (n-1)/2,\\
 \, \, \beta + 1, \hspace{18.5mm} if \hspace{4mm} (n-1)/2 \le \beta<\infty .
         \end{cases} 
\end{equation*} 
\end{theorem}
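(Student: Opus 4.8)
The plan is to prove Theorem \ref{teo.recovery} by analyzing the Born series
\[
q_\theta - q \sim \sum_{j=2}^{\infty} Q_{\theta,j}(q),
\]
and showing that each term, as well as the full tail, lies in a Sobolev space at least as regular as the asserted $W^{\alpha,2}$, modulo a smooth function. The key point, going back to \cite{PSo} and exploited in \cite{R} for fixed angle scattering, is that the difference $q_\theta - q$ is \emph{smoother} than $q$ itself: each multiple-dispersion operator $Q_{\theta,j}$ gains regularity because the corresponding composition of free resolvents has a smoothing effect. So the strategy splits into two parts: first a careful estimate on the double dispersion term $Q_{\theta,2}(q)$, which is expected to be the term governing the threshold $\alpha$, and then a summable bound on $\sum_{j\ge 2} Q_{\theta,j}(q)$ showing the tail is smoother still.

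First I would set up the double dispersion operator $Q_{\theta,2}(q)$ as an oscillatory/Fourier integral. Writing $R_0(k)$ for the free outgoing resolvent of $-\Delta$ at energy $k^2$, $Q_{\theta,2}(q)$ has the schematic form of an integral over $k$ of $e^{-ik\theta\cdot x}\, q\, R_0(k)(e^{ik\theta\cdot(\cdot)} q)$ (or the appropriate bilinear analogue), so that after taking Fourier transforms one is left with a convolution-type integral in which the singular factor is the symbol $(|\xi|^2 - k^2 - i0)^{-1}$ of the resolvent. The regularity gain comes from integrating this in $k$ against the oscillation produced by the plane-wave factors $e^{\pm ik\theta\cdot x}$: this is exactly the mechanism by which a propagator smooths. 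The cleanest route is to decompose dyadically in the frequency variable $|\xi|$ and in the resolvent-energy variable $k$, estimate each piece in $L^2$ with weights using the standard resolvent (limiting absorption / trace) estimates $\|\jp{x}^{-1/2-\varepsilon} R_0(k) \jp{x}^{-1/2-\varepsilon}\|_{L^2\to L^2} \lesssim \jp{k}^{-1}$ together with the improved Sobolev-regularity version $\|R_0(k)f\|_{H^{s+1}} \lesssim \|f\|_{H^{s-1}}$ away from the characteristic set, and then sum the dyadic pieces. The compact support of $q$ is used to insert the weights $\jp{x}^{-N}$ freely and to kill the low-energy contribution, which is where the ``modulo $C^\infty$'' comes from: the small-$k$ part of the $k$-integral produces an analytic (hence smooth) function of $x$.

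The case split in the statement mirrors a competition between two sources of loss. When $\beta$ is large (past $(n-1)/2$), $q$ is in $L^\infty$ (even continuous) by Sobolev embedding, so products like $q\cdot(\text{resolvent of } q)$ are controlled in the same Sobolev class and the resolvent's one-derivative smoothing is fully available, yielding the clean gain $\alpha < \beta + 1$. When $(n-3)/2 < \beta < (n-1)/2$, the potential is too rough to multiply freely, and one must pay for the product $q \cdot (\cdots)$ using a fractional Leibniz / bilinear Sobolev estimate; the bilinear term $Q_{\theta,2}(q)$ then behaves roughly like a product of two copies of $q$ with one derivative restored, which lands it in $W^{\alpha,2}$ only for $\alpha < 2\beta - (n-3)/2$ — the endpoint being exactly where a bilinear $H^\beta\cdot H^\beta \hookrightarrow H^{\alpha}$-type embedding fails. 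I would carry out this bilinear estimate via a Littlewood–Paley decomposition (high–high, high–low, low–high interactions), where the high–high interaction is responsible for the $2\beta - (n-3)/2$ bound. For $0 \le \beta \le (n-3)/2$ the theorem claims nothing — no positive $\alpha$ is asserted beyond what one already knows — consistent with the obstruction in Theorem \ref{teo.1}.

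The main obstacle, as usual in recovery-of-singularities arguments, is the double dispersion term: one must extract a full derivative of smoothing from $Q_{\theta,2}$ \emph{uniformly in the scattering direction $\theta$} and with the correct dependence on $\beta$, which requires combining the resolvent smoothing estimate with the bilinear product estimate in a way that is sharp at both ends of the $k$- and $\xi$-decompositions. The higher-order terms $Q_{\theta,j}$, $j \ge 3$, are then handled by iterating the resolvent bounds: each additional factor of $R_0(k)$ contributes a decaying factor in $k$ (from the limiting-absorption estimate) and, by Proposition \ref{prop.Qj}, these estimates sum to give a convergent series whose sum is at least as regular as $Q_{\theta,2}(q)$ — in fact strictly more regular for the relevant range of $\beta$, so it does not affect the threshold. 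Assembling the low-energy smooth remainder, the bilinear estimate for $j=2$, and the summable tail for $j \ge 3$ gives $q - q_\theta \in W^{\alpha,2}(\RR^n)$ modulo $C^\infty$ under the stated conditions.
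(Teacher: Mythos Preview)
Your overall architecture matches the paper's exactly: write $q-q_\theta$ (modulo $C^\infty$) as $-\widetilde Q_{\theta,2}(q) - \sum_{j\ge 3}\widetilde Q_{\theta,j}(q)$, invoke Proposition~\ref{prop.Qj} to see that for $\beta>(n-3)/2$ the tail $\sum_{j\ge 3}$ already lives in $W^{\alpha,2}$ with $\alpha<\alpha_3$, and check that $\alpha_3$ dominates the threshold coming from the $j=2$ term. So the reduction to Theorem~\ref{teo.Q2} is right, and your heuristic for why the case split occurs (Sobolev multiplication vs.\ bilinear loss) is sound.

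Where you diverge from the paper is in the proposed mechanism for the $Q_{\theta,2}$ estimate itself, and here your sketch is both different and somewhat underspecified. You describe $Q_{\theta,2}$ as ``an integral over $k$'' to which one applies dyadic decomposition in $k$ and $|\xi|$, limiting-absorption bounds, and Littlewood--Paley paraproduct analysis. But in fixed angle scattering there is no integral in $k$: for each frequency $\eta\in H_\theta$ the energy is pinned to $k=k(\eta,\theta)=-|\eta|^2/(\eta\cdot\theta)$, so the symbol of $Q_{\theta,2}$ is a \emph{single} evaluation of $qR_k(qe^{ik\theta\cdot(\cdot)})$ at that $k$. The paper exploits this rigidity directly: Proposition~\ref{prop.fixangleformula} computes $\widehat{Q_{\theta,2}(q)}(\eta)$ explicitly as the sum of a spherical integral $S_{\theta,1}$ over the Ewald sphere $\Gamma_1(-2k\theta)$ and a principal value $P_\theta$ in the radial Ewald parameter $r$. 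The spherical piece is then bounded (Lemma~\ref{lemma.K.Q2}) by Cauchy--Schwarz on the sphere combined with the trace theorem (Proposition~\ref{prop:traceT}) and the elementary spherical integral Lemma~\ref{lemma.integrals}; the principal value is controlled by a Lipschitz estimate in $r$ (Proposition~\ref{prop.esdefnQ2}) plus a dyadic decomposition near $r=1$ that trades a small power of $k$ for summability. Your Littlewood--Paley/paraproduct route might ultimately reach the same endpoint, but the paper's Ewald-sphere computation is what actually delivers the sharp $\alpha<\beta+1$ gain here, and your outline does not yet contain a concrete substitute for that step.
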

The condition $q-q_{\theta} \in W^{\alpha,2}(\RR^n)$,  modulo a $C^\infty$ function implies that $q-q_{\theta} \in W_{loc}^{\alpha,2}(\RR^n)$. This theorem shows that there is a $1^-$ derivative gain when $\beta\ge(n-1)/2$, which, except for the limiting case $\alpha=\beta+1$, is the best possible result by Theorem $\ref{teo.1}$. It also improves the results of \cite{R} in the spaces $W^{\alpha,2}(\RR^n)$ for every value of $\beta$ (see Figure \ref{fig.dim3yn}).

\begin{figure} \label{fig.dim3yn}
\begin{tikzpicture}[scale=1.5]
 \draw[<->] (3,0) node[below]{$\beta$} -- (0,0) --
(0,2.3) node[left]{$\alpha-\beta$};
  \draw [dotted,gray, very thin] (0.5,1) -- (0.5,0);
   \draw [dotted, gray, very thin] (0,1) -- (.5,1);
    \draw [dotted, gray, very thin] (1,1) -- (1,0);
    \draw [dotted, gray, very thin] (0,.5) -- (1.5,.5);
     \draw [dotted,gray, very thin] (1.5,0.5) -- (1.5,0);
    \draw  [dashed, semithick,red] (0,0.5) -- (0.5,1);
 \draw  [dashed, semithick,red]  (0.5,1) -- (3,1);
   \draw [dash dot ] (0,0) -- (1.5,.5);
    \draw  [semithick,blue](0,0) -- (1,1);
 \draw  [semithick,blue] (1,1) -- (3,1);
  \node at (1,2) {$n=3$};
 \node [left] at (0,1) { \tiny $1$ };
 \node [left] at (0,0.5) { \tiny $\frac{1}{2}$ };
 \node [below] at (0.5,0) { \tiny $\frac{1}{2}$ };
  \node [below] at (1,0) { \tiny $1$ };
   \node [below] at (1.5,0) { \tiny $\frac{3}{2}$ };
\end{tikzpicture}
\begin{tikzpicture}[scale=1.5]
1. \draw[<->] (3.5,0) node[below]{$\beta$} -- (0,0) --
(0,2.3) node[left]{$\alpha-\beta$};
 \draw [dotted, gray, very thin] (0,1) -- (2,1);
 \draw [dotted, gray, very thin] (2,1) -- (2,0);
  \draw [dotted, gray, very thin] (3,.5) -- (3,0);
  \draw [dotted,gray,very thin] (2.5,1) -- (2.5,0);
   \draw [dotted, gray,very thin] (1.4,0) -- (1.4,0.4);
    \draw  [dashed, semithick,red](1,0) -- (2,1);
 \draw  [dashed, semithick,red]  (2,1) -- (3.5,1); 
    \draw [dash dot ] (1.8,0) -- (3,.5);  
 \draw  [semithick,blue] (1.5,0) -- (2.5,1);
 \draw  [semithick,blue] (2.5,1) -- (3.5,1);
  \node at (1,2) {$n>3$};
 \node [left] at (0,1) { \tiny $1$ };
  \node [ below] at (3,0) { \tiny $\frac{n}{2}$ };
 \node [below] at (2.6,0) { \tiny $\frac{n-1}{2}$ };
 \node [below] at (2.1,0) { \tiny $\frac{n-2}{2}$ };
\node [ below] at (1.6,0) { \tiny $\frac{n-3}{2}$ };
   \node [below] at (.9,0) { \tiny $\frac{n-4}{2}$ };
  \node [below] at (1.3,0) { \tiny $m$ };
\end{tikzpicture}
   \caption{The (red) dashed line represents the limitation on the regularity gain given in Theorem  $\ref{teo.1}$ for $q-q_{\theta}$ and in Theorem $\ref{teo.Q2}$ for the $Q_{\theta,2}$ operator, and the solid (blue)  line represents the positive results given in Theorem $\ref{teo.recovery}$. The dot dashed line represents the previously known positive results of \cite{R}.}
\end{figure}
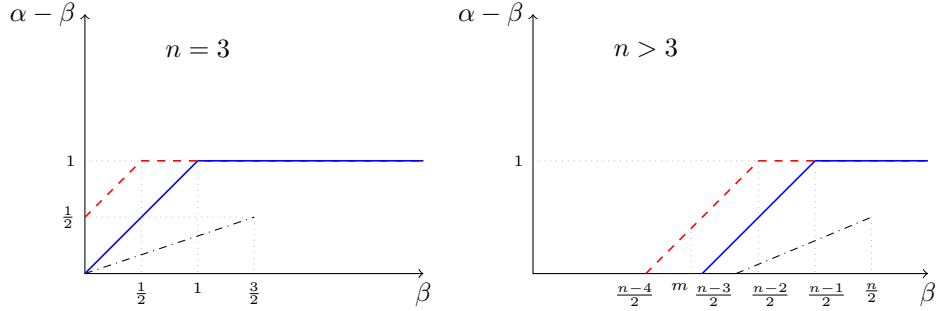

The key point to prove Theorem \ref{teo.recovery} is to obtain new estimates of the double dispersion operator $Q_{\theta,2}$ that we now introduce. Consider a constant $C_0>1$ and  let $0 \le \chi(\xi) \le 1$, $\xi\in \RR^n$, be a smooth cut-off function such that
 \begin{equation} \label{eq.cutoff1}
 {\chi}(\xi)= 1 \;\; \text{if} \;\; |\xi|>2C_0 \;\; \text{and} \;\; \chi(\xi)=0 \;\; \text{if} \;\; |\xi|<C_0.
 \end{equation}
 Then we define
\begin{equation}  \label{eq.cutoff2}
 \widehat{\widetilde{Q}_{\theta,j}(q)}(\xi) := {\chi}(\xi)\widehat{{Q}_{\theta,j}(q)}(\xi). 
 \end{equation}
Hence,  $\widetilde{Q}_{\theta,j}(q)$ and ${Q}_{\theta,j}(q)$ differ just in a $C^\infty$ function.
\begin{theorem}  \label{teo.Q2}
Let $n\ge 2$ and $q\in W_2^{\beta,2}(\RR^n)$ with $0 \le \beta < \infty$.  Then 
\begin{equation*}
\norm{\widetilde {Q}_{\theta, 2}(q)}_{{W}^{\alpha,2}}\le C  \norm{q}_{W^{\beta,2}_2}^2,
\end{equation*}
if the following conditions also hold,
\begin{equation} \label{eq:rangepos}
 \alpha < \begin{cases}
 \,\, 2\beta - (n-3)/2, \hspace{4 mm} if \hspace{5mm} (n-3)/2 <\beta < (n-1)/2,\\
 \, \, \beta + 1, \hspace{18.5mm} if \hspace{5mm} (n-1)/2 \le \beta<\infty .
         \end{cases} 
\end{equation} 
Conversely, if we assume that ${Q_{\theta,2}(q) \in {W}_{loc}^{\alpha,2}(\RR^n)}$ for all ${q\in W^{\beta,2}(\RR^n)}$ real, radial and compactly supported, then $\alpha$ necessarily satisfies
\begin{equation} \label{eq.remarkable} 
\alpha\le \min (\beta + 1, 2\beta  -(n-4)/2).
\end{equation}
\end{theorem}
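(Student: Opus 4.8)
The plan rests on an explicit bilinear formula for $Q_{\theta,2}$. Writing the outgoing free resolvent $R_0(k)=(-\Delta-k^2-i0)^{-1}$ through its Fourier symbol and feeding the Born expansion of the scattering solution into the fixed angle reconstruction formula, Proposition \ref{prop.Qj} gives, on the reachable Fourier half-space,
\begin{equation*}
\widehat{Q_{\theta,2}(q)}(\xi)=c_n\int_{\RR^n}\frac{\widehat q(\xi-\eta)\,\widehat q(\eta)}{|\eta|^2+2k(\xi)\,\theta\cdot\eta-i0}\,d\eta,\qquad k(\xi)=-\frac{|\xi|^2}{2\,\theta\cdot\xi}\ \ge\ \frac{|\xi|}{2},
\end{equation*}
so the kernel is singular exactly on the sphere $S_\xi=\{|\eta+k(\xi)\theta|=k(\xi)\}$, which passes through the origin, and by Plemelj $\operatorname{Im}\widehat{Q_{\theta,2}(q)}(\xi)$ equals a constant times the honest surface integral $\int_{S_\xi}\widehat q(\xi-\eta)\widehat q(\eta)\,d\sigma(\eta)$. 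Note $k(\xi)=|\xi|/2$ only when $\xi\parallel\theta$, while $k(\xi)\to\infty$ as $\xi\to\theta^\perp$; the cut-off $\chi$ removes $|\xi|\le C_0$.

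For the estimate I would run a double dyadic decomposition in the output frequency $|\xi|\sim 2^j$ and the inputs $|\eta|\sim 2^\ell$, $|\xi-\eta|\sim 2^{\ell'}$, resolving the $-i0$ by Plemelj and splitting the $\eta$-integral near and far from $S_\xi$. Two distinct mechanisms produce the two branches of \eqref{eq:rangepos}. Where $|\xi|\lesssim\max(|\eta|,|\xi-\eta|)$ the factor $1/D$ acts like a $(-\Delta-k^2)^{-1}$ smoothing relative to the larger input and contributes exactly one extra derivative, giving the $\beta+1$ bound. Where $|\xi|$ dominates, one input is forced onto a neighbourhood of $S_\xi$; here I would use that $q\in W^{\beta,2}_2$ puts $\jp{\cdot}^{\beta}\widehat q$ in $H^2$, hence it admits traces onto hypersurfaces, and combine a restriction/trace estimate adapted to $S_\xi$ (whose curvature is uniform after rescaling by $k(\xi)$) with the $\beta$ derivatives of decay, at the cost of $(n-3)/2$ derivatives — this is the $2\beta-(n-3)/2$ term. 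Summing the geometric series in $j,\ell,\ell'$ converges precisely under \eqref{eq:rangepos}; together with the analogous (easier, more regular) bounds for $\widetilde Q_{\theta,j}$, $j\ge3$, in Proposition \ref{prop.Qj}, this also yields Theorem \ref{teo.recovery}.

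For the converse I would take a fixed real, radial, compactly supported $q_a$ with $\widehat{q_a}(\xi)=|\xi|^{-a}+O(|\xi|^{-a-1})$ as $|\xi|\to\infty$ (obtained by cutting off, in both $x$ and $\xi$, a homogeneous distribution), where $a=\beta+n/2+\varepsilon$, so that $q_a\in W^{\beta,2}(\RR^n)$ for every $\varepsilon>0$. Evaluating $\operatorname{Im}\widehat{Q_{\theta,2}(q_a)}(\xi)=c_n'\int_{S_\xi}\widehat{q_a}(\xi-\eta)\widehat{q_a}(\eta)\,d\sigma$ along the ray $\xi=-t\theta$ (so $k=t/2$ and $S_\xi$ is the sphere through $0$ of radius $t/2$ about $-(t/2)\theta$), the main contribution is the spherical cap near the origin: there $\widehat{q_a}(\xi-\eta)\approx\widehat{q_a}(-t\theta)\sim t^{-a}$, and writing $\eta=(\eta',\eta_n)$ the cap is the paraboloid $\eta_n=-|\eta'|^2/t+\cdots$ with $d\sigma\approx d\eta'$, so one is left with $t^{-a}\int_{|\eta'|\gtrsim1}|\eta'|^{-a}\,d\eta'$ (the low frequencies of the second factor contributing only a bounded amount). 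If $\beta>(n-2)/2$ this integral converges and $\operatorname{Im}\widehat{Q_{\theta,2}(q_a)}(-t\theta)\sim t^{-a-1}$; if $\beta<(n-2)/2$ it must be truncated at $|\eta'|\sim t$, where the cap approximation breaks, giving $\operatorname{Im}\widehat{Q_{\theta,2}(q_a)}(-t\theta)\sim t^{\,n-2-2a}$. Checking that the rest of $S_\xi$ and the error in $\widehat{q_a}$ are of lower order, and that the expansion is stable on an open cone of directions about $-\theta$, one gets $|\widehat{Q_{\theta,2}(q_a)}(\xi)|\gtrsim|\xi|^{-c_0}$ with $c_0=\min(a+1,2a-n+2)$ on that cone; convolving with a fixed real even cut-off to pass to $W^{\alpha,2}_{loc}$ preserves this non-oscillatory lower bound, so $Q_{\theta,2}(q_a)\notin W^{\alpha,2}_{loc}$ whenever $\alpha\ge c_0-n/2=\min(\beta+1,2\beta-(n-4)/2)+O(\varepsilon)$. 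Letting $\varepsilon\downarrow0$ gives \eqref{eq.remarkable}.

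I expect the main difficulty on the positive side to be the lack of uniformity of $k(\xi)$: as $\xi\to\theta^\perp$ the sphere $S_\xi$ becomes flat on scales below $|\xi|$, so its curvature cannot be exploited there and the two derivatives available from the $\jp{\cdot}^2$ weight must be spent very carefully in the rescaled trace estimate — and it is exactly this that leaves the $\tfrac12$-derivative gap between \eqref{eq:rangepos} and \eqref{eq.remarkable} when $\beta<(n-1)/2$. On the converse side the delicate point is making the asymptotics of $\int_{S_\xi}\widehat{q_a}(\xi-\eta)\widehat{q_a}(\eta)\,d\sigma$ uniform in a cone of $\xi$, separating the two competing contributions (the part with $\eta$ near $0$, of size $t^{-a-1}$, versus the whole near-origin cap, of size $t^{\,n-2-2a}$, which cross over exactly at $\beta=(n-2)/2$) and verifying that neither the principal value part of the kernel nor the lower-order part of $\widehat{q_a}$ cancels the surviving main term.
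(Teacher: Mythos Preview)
Your plan is broadly correct, and the converse half closely matches the paper's argument: the $g_\beta$ constructed there is essentially your $q_a$ (a cut-off Bessel kernel with $\widehat{g_\beta}\gtrsim\jp{\xi}^{-n/2-\beta}$), and the lower bound on the imaginary part is obtained from exactly the two competing contributions you identify --- the cap near the origin versus the bulk of the sphere --- crossing over at $\beta=(n-2)/2$. Two remarks on the positive direction, where your sketch diverges from the paper.

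First, the paper does not run a Littlewood--Paley decomposition in $|\xi|,|\eta|,|\xi-\eta|$. Instead it writes the principal value as a one-dimensional integral
\[
P_\theta(q)(\eta)=\pv\int_0^\infty\frac{S_{\theta,r}(q)(\eta)}{1-r}\,dr
\]
over concentric spheres $\Gamma_r(-2k\theta)$ of radius $rk$, and proves (i) a uniform bound $\|k^\varepsilon \widetilde S_{\theta,r}(q)\|_{L^2_\alpha}\lesssim(1+r)^{-\gamma}$, and (ii) a Lipschitz estimate $\|k^{-1}(\widetilde S_{\theta,r_1}-\widetilde S_{\theta,r_2})\|_{L^2_\alpha}\lesssim|r_1-r_2|$ near $r=1$, spending the \emph{second} derivative from the weight $\jp{x}^2$ to differentiate the spherical integral in $r$. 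The $P_\theta$ bound then follows by splitting $|1-r|\lessgtr\delta$, using (ii) to kill the singularity near $r=1$ and a short dyadic sum \emph{in $r$} (not in the frequency variables) for the intermediate region. Your frequency-dyadic scheme is plausible, but ``summing geometric series'' does not by itself control the logarithmic pile-up at the singular sphere; you would need a substitute for this Lipschitz step.

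Second, your worry about curvature and the degeneration as $\xi\to\theta^\perp$ is misplaced. The paper uses only the elementary $W^{1,2}$ trace inequality on spheres with constant $1$ independent of the radius (Proposition~\ref{prop:traceT}), never a Stein--Tomas restriction bound, so no curvature is needed and the blow-up of $k(\xi)$ near $\theta^\perp$ causes no difficulty: all estimates are uniform in $k$ once one uses $|\eta|\le2k$. This is also why exactly two weights suffice --- one derivative for the trace in Lemma~\ref{lemma.K.Q2}, one for the Lipschitz estimate in Proposition~\ref{prop.esdefnQ2}. The half-derivative gap between \eqref{eq:rangepos} and \eqref{eq.remarkable} for $\beta<(n-1)/2$ is simply the gap between the $W^{1,2}$ trace exponent and the optimal one, not a curvature-degeneration artefact. (Minor point: the bilinear formula is Proposition~\ref{prop.fixangleformula}, not Proposition~\ref{prop.Qj}.)
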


The necessary condition (\ref{eq.remarkable}) also holds for the double dispersion operator $Q_{F,2}$ in full data scattering (see Theorem \ref{teo.count}).  It is interesting to notice that this implies that the estimates of $Q_{F,2}$ given in \cite[Theorem 1.2]{BFRV10} are the best possible for every $0 \le \beta < \infty$ and $p=2$ (except for the limiting case). Hence, in full data scattering the regularity gap between the positive and negative results for the double dispersion operator has been closed for the complete range of $\beta$ and $n\ge2$. Unfortunately, this is not translated into optimal results of recovery of singularities, since also the estimates of higher order $Q_{F,j}$ operators  should be improved in the range $(n-4)/2 < \beta \le (n-2)/2$ (see \cite[Theorem 1.1]{BFRV10}).  

The paper is structured as follows. In section \ref{sec.2} we introduce in detail the fixed angle and full data scattering problems, and we sketch the proof of Theorem \ref{teo.recovery} from Theorem \ref{teo.Q2}. In sections \ref{sec.3} and \ref{sec.4} we prove the estimates of the $\widetilde Q_{\theta,2}(q) $ operator given in Theorem \ref{teo.Q2}. The proof of this theorem  is finished in section \ref{sec.5}, where we construct the counterexamples from which condition (\ref{eq.remarkable}) follows and we prove Theorem \ref{teo.1}. In the appendix we give some technical results used in the rest of the paper.

\section{The fixed angle and full data scattering problems} \label{sec.2}

 We now introduce with more detail the fixed angle and full data scattering inverse problems, following  \cite{BFRV10} (see, for example, \cite[chapter V]{eskin} for a more general introduction to scattering).  Consider  the scattering solution $u_s(k,\theta,x)$, $k\in(0,\infty)$, $\theta\in \SP^{n-1}$,  of the stationary Schrödinger equation which satisfies
\begin{equation} \label{eq.int.1}
\begin{cases}
(-\Delta + q -k^2)u=0 \\
u(x) = e^{i k \theta \cdot x} + u_s(k,\theta,x) \\
\lim_{|x| \to \infty} (\frac{\partial u_s}{\partial r} - iku_s)(x) = o(|x|^{-(n-1)/2}),
\end{cases}
\end{equation}
where the last line is the outgoing Sommerfeld radiation condition necessary for uniqueness.  If $q$ is compactly supported, a solution $u_s$ of (\ref{eq.int.1}) must have the following asymptotic behavior when $|x| \to \infty$
\[u_s(k,\theta,x) =  C |x|^{-(n-1)/2}k^{(n-3)/2} e^{ik|x|} u_\infty(k,\theta,x/|x|) + o(|x|^{-(n-1)/2}) ,\]
 for a certain function $u_\infty(k,\theta,\theta')$, $k\in (0,\infty)$, $\theta,\theta' \in \SP^{n-1}$. As mentioned in the introduction, $u_\infty$ is the scattering amplitude or far field pattern, and it is given by the expression
\begin{equation} \label{eq.int.2}
u_\infty (k,\theta,\theta') = \int_{\RR^n} e^{-ik\theta'\cdot y} q(y) u(y) \, dy,
\end{equation}
where it is important to notice that $u$ depends also on $k$ and $\theta$ (see, for example, \cite{notasR} for a proof of this fact when $q\in C^\infty_c(\RR^n)$). 

Applying the outgoing resolvent of the Laplacian $R_k$ in the first line of (\ref{eq.int.1}), where 
\begin{equation} \label{eq:resolvent}
\widehat{R_k(f)}(\xi)  = (-|\xi|^2+k^2 + i0)^{-1}\widehat{f}(\xi),
\end{equation}
we obtain the Lippmann-Schwinger integral equation
\begin{equation} \label{eq.int.3}
u_s=R_k(qe^{ik\theta \cdot (\cdot)}) + R_k(qu_s(k,\theta,\cdot)).
\end{equation}
The existence and uniqueness of the scattering solution of (\ref{eq.int.1}) follows from a priori estimates for the resolvent operator $R_k$ and the previous integral equation (\ref{eq.int.3}). In the case of real potentials, this can be shown with  the help of Fredholm theory, see, for example, \cite[Chapter 6]{notasR}. Otherwise, since the norm of the operator $T(f)= R_k(qf)$  decays to zero as $k \to \infty$ in appropriate function spaces, we can also use a Neumann series expansion in (\ref{eq.int.3}) which will be convergent for  $k >k_0$ (in general $k_0 \ge 0$  will depend on some a priori bound of $q$). For our purposes it is enough to consider compactly supported $q \in L^r(\RR^n)$, $r>n/2$. Notice that by the Sobolev embedding this is satisfied if $q\in W^{\beta,2}(\RR^n)$ with $\beta>(n-4)/2$. See \cite[p. 511]{BFRV10} for more details and references.

We can now introduce the inverse scattering problems. If we insert (\ref{eq.int.3}) in (\ref{eq.int.2}),  we can expand  the Lippmann-Schwinger equation in a Neumann series, as we mentioned before.  Then we obtain the Born series expansion relating  the scattering amplitude and the Fourier transform of the potential
\begin{align}
\nonumber u_\infty (k,\theta,\theta') = \widehat{q}(\eta) + \sum^l_{j=2}\int_{\RR^n} e^{-ik \theta'\cdot y} (qR_k)^{j-1}(q(\cdot)e^{ik\theta \cdot (\cdot)} )(y) \,dy\\
 \label{eq.int.4} + \int_{\RR^n} e^{-ik \theta'\cdot y} (qR_k)^{l-1}(q(\cdot)u_s(k,\theta, \cdot) )(y) \,dy,
\end{align}
where $\eta = k(\theta'-\theta)$, $k>k_0$ and the last is the error term. 
The problem of determining $q$ from the knowledge of the scattering amplitude is formally overdetermined in the sense that  the data $u_\infty(k,\theta,\theta')$ is described by $2n-1$ variables, while the unknown potential $q(x)$ has only $n$. There are different ways to deal with the extra parameters.

{\it Backscattering problem.} In backscattering, only knowledge of $u_\infty(k,\theta,-\theta)$  is assumed  for all $k>k_0$ and $\theta \in \SP^{n-1}$.  Then the problem is formally well determined, and the Born approximation $q_{B}$ is defined by the identity,
\begin{equation*} 
 \widehat{q_{B}}(\eta):= u_\infty(k,\theta,-\theta), 
 \end{equation*}
 where $\eta= -2k\theta$ and $k>k_0$. We will not treat this problem further in this work, see the introduction for references.
 
 {\it Fixed angle scattering problem}. In the fixed angle scattering problem one assumes  knowledge of $u_\infty(k,\theta,\theta')$ only for a fixed $\theta \in \SP^{n-1}$ and the opposite unit vector $-\theta$, and for all $k>k_0$ and $\theta' \in \SP^{n-1}$. Then the problem is  formally well determined. Uniqueness for the inverse problem of recovering $q(x)$ from the previous data  is still an open question.  Generic uniqueness and uniqueness  for small potentials has been obtained in \cite{stefanov} for  potentials in dimension 3 with certain  smoothness conditions. Also, it has been shown in \cite{BLM} that if the scattering amplitude vanishes for a fixed $\theta$, then $q$ has to be zero. 
 
  Now, for a fixed $\theta$, the identity $\eta = k(\theta'-\theta)$ gives us a diffeomorphism (a chart) from $(0,\infty)\times \SP^{n-1}$ to $H_\theta \subset \RR^n$, where
\begin{equation*} 
H_\theta := \{\eta \in \RR^n: \eta \cdot \theta <0 \},
\end{equation*} 
is an open half space of $\RR^n$. Inverting this diffeomorphism, we obtain that for $\eta \in H_\theta$, the relation $\eta = k(\theta'-\theta)$ is satisfied if and only if
\begin{equation} \label{eq.keta}
 k(\eta,\theta) := - \frac{|\eta|^2}{\theta \cdot \eta} \hspace{3mm}\text{and} \hspace{3mm} \theta'(\eta,\theta) := k^{-1}(\eta + k\theta).
\end{equation}
 We notice that the condition $k(\eta,\theta)>k_0$ holds  if we ask $|\eta|>C_0$ for any constant $C_0>2k_0$ since we always have that $2k\ge |\eta|$. Therefore for $|\eta|>C_0$, we can define the Born approximation of fixed angle scattering as follows,
\begin{equation} \label{eq.born}
 \widehat{q_{\theta}}(\eta):= \begin{cases}
 u_\infty(k(\eta, \theta),\theta,\theta'(\eta, \theta)), \hspace{14mm} \text{when}  \hspace{4mm} \eta\in H_\theta,\\
u_\infty(k(\eta,-\theta),-\theta,\theta'(\eta,-\theta)), \hspace{6mm} \text{when}  \hspace{4mm}\eta\in H_{-\theta},
 \end{cases}
 \end{equation}
where we  need the angle $-\theta$ to generate the opposite half space to cover a full measure subset of $\RR^n$. For real potentials, using symmetries of the scattering data $u_\infty$, is possible to reduce the data to only one unit vector $\theta$  (see \cite{PSo}) but we consider directly the general case. 
Assuming convergence of the series, we can write  (\ref{eq.int.4}) taking $l \to \infty$  as follows,
\begin{equation} \label{eq.int.5}
 \chi(\eta)\widehat{q}_\theta(\eta)  =  \chi(\eta)\widehat{q}(\eta) + \sum_{j=2}^{\infty} \widehat{\widetilde{Q}_{\theta,j}(q)}(\eta) ,
 \end{equation}
where $\widehat{\widetilde{Q}_{\theta,j}(q)}(\eta) = {\chi}(\eta)\widehat{{Q}_{\theta,j}(q)}(\eta)$ (see (\ref{eq.cutoff2})), and
\begin{equation} \label{eq.QjB}
\widehat{Q_{\theta,j}(q)}(\eta) := {B_{\theta,j}(q)}(\eta) + {B_{-\theta,j}(q)}(\eta),
\end{equation}
 if we define, for $\theta \in \SP^{n-1}$, 
\begin{equation} \label{eq.B}
{B_{\theta,j}(q)}(\eta) := \begin{cases}
\int_{\RR^n} e^{-ik \theta'\cdot y} (qR_k)^{j-1}(qe^{ik\theta\cdot (\cdot)} )(y) \,dy, \hspace{4mm}\text{if} \hspace{3mm} \eta \in H_\theta, \\
0 \hspace{58mm}\text{if} \hspace{3mm} \eta \notin H_\theta,
\end{cases}
\end{equation}
with $k=k(\eta,\theta)$ and $\theta'= \theta'(\eta,\theta)$  given by (\ref{eq.keta}). Notice we have avoided giving a definition of $\widehat{q_\theta}(\eta)$ for all $|\eta|<C_0$. As a consequence, from now on it will be understood that the function $q_\theta(x)$ is defined {\it modulo a $C^\infty$ function} (when the potentials are real, $u_\infty$ is defined for all $k>0$ so we also have, by (\ref{eq.born}) a definition of $\widehat q_\theta(\eta)$ for low frequencies).

{\it Full data scattering}. In this case we  construct a Born approximation considering the values  $u_\infty(k,\theta,\theta')$ for all $k>k_0$ and $\theta,\theta' \in \SP^{n-1}$.  To do that, fix $\eta \in \RR^n$, and consider the Born approximation of fixed angle scattering for every $\theta \in \SP^{n-1}$. We are going to deal with the overdeterminacy by taking an average on the $\theta$ parameter.  Taking the average  of (\ref{eq.int.5}) in $\SP^{n-1}$, we obtain
\begin{equation} \label{eq.Fborn}
  \chi(\eta)\widehat{q}(\eta) =  \chi(\eta)\widehat{q}_{F}(\eta) + \sum_{j=2}^{\infty}   \widehat{\widetilde{Q}_{F,j}(q)}(\eta),
 \end{equation} 
 where $\widehat{\widetilde{Q}_{F,j}(q)}(\eta):={\chi}(\eta) \widehat{{Q}_{F,j}(q)}(\eta)$ and $q_F$ is the Born approximation of full data scattering,
 \[ \widehat{q_{F}}(\eta):= \fint_{\SP^{n-1}} \widehat{q_\theta}(\eta) \, d\sigma(\theta), \hspace{3mm} \text{and} \hspace{3mm} \widehat{Q_{F,j}(q)}(\eta) = \fint_{\SP^{n-1}} \widehat{Q_{\theta,j}(q)}(\eta) \, d\sigma(\theta).\]
 In this work we are not going to need any explicit formula for the $Q_{F,j}$ operators, only to observe that  by  (\ref{eq.QjB}) we have
\begin{equation} \label{eq.BjF}
\widehat{Q_{F,j}(q)}(\eta) =  \frac{2}{|\SP^{n-1}|} \int_{\{\theta\in \SP^{n-1}: \, \eta \cdot\theta <0\}} {B_{\theta,j}(q)}(\eta) \, d\sigma(\theta),
\end{equation}
 (see \cite{BFRV10} for a specific computation of the  Fourier symbol of  $Q_{F,2}$).
 
We now introduce the main result giving convergence of the series (\ref{eq.int.5}) in Sobolev spaces.
\begin{proposition}     \label{prop.Qj}
Let  $n\ge 2$, $j\ge 2$ and $\max(0,m) \le \beta<\infty$, where $m$ was defined in $(\ref{eq.m})$. Then if $q\in W^{\beta,2}(\RR^n)$  is compactly supported, $\widetilde Q_{\theta,j}(q) \in W^{\alpha,2}(\RR^n)$ 
 if $\alpha<\alpha_j$, with
\begin{equation} \label{eq.alphaj}
\alpha_j = \beta -\frac{1}{2} + (j-1)  - (j-1) \frac{(n-1)}{2}  \max{\left( 0,\frac{1}{2} - \frac{\beta}{n} \right )}  .
\end{equation}
Moreover,  for every $\alpha >0$, there exists an $l\ge 2$  such that the series $ \sum_{j=l}^{\infty} \widetilde Q_{\theta,j}(q)$
converges absolutely in $W^{\alpha,2}(\RR^n)$ provided we take ${C_0=C\norm{q}^{1/\varepsilon}_{W^{\beta,2}}}$ in $(\ref{eq.cutoff1})$ and $(\ref{eq.cutoff2})$ for a large constant $C=C(n,\alpha,\beta,\supp \, q)$ and a certain $\varepsilon = \varepsilon(n,\beta)>0$.
\end{proposition}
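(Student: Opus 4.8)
The plan is to reduce everything to the half--space integrals $B_{\theta,j}$, rewrite those in terms of conjugated resolvents, and then run a dyadic argument in the frequency variable. Since $\widetilde Q_{\theta,j}(q)$ and $Q_{\theta,j}(q)$ differ by a $C^\infty$ function it suffices to estimate the former, and by (\ref{eq.QjB}) it is enough to treat $B_{\theta,j}(q)$ and $B_{-\theta,j}(q)$ separately. The first (algebraic) step is to conjugate the outgoing resolvent by the plane wave: $R_k\!\big(e^{ik\theta\cdot(\cdot)}g\big)=e^{ik\theta\cdot x}P_{k,\theta}(g)$, where $P_{k,\theta}$ has Fourier symbol $(-|\zeta|^2-2k\theta\cdot\zeta+i0)^{-1}$. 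Iterating $j-1$ times in (\ref{eq.B}) and using that the leftover plane wave shifts the frequency at which the Fourier transform is evaluated from $k\theta'$ to $\eta=k(\theta'-\theta)$, one gets, for $\eta\in H_\theta$ and $k=k(\eta,\theta)$ as in (\ref{eq.keta}),
\[ B_{\theta,j}(q)(\eta)=\widehat{F^{\,j}_{k,\theta}}(\eta),\qquad F^{\,j}_{k,\theta}:=q\,P_{k,\theta}\big(q\,P_{k,\theta}(\cdots q\,P_{k,\theta}(q))\big), \]
with $j$ factors of $q$ and $j-1$ factors of $P_{k,\theta}$. Two features will be used repeatedly: the outermost factor is multiplication by $q$, so $F^{\,j}_{k,\theta}$ is supported in $\supp q$ (hence weighted and unweighted norms of it are comparable, with constant depending only on $\supp q$); and $k=k(\eta,\theta)\ge|\eta|$ on $H_\theta$, so large $|\eta|$ forces large $k$, which is exactly the regime where the resolvent is well behaved.

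The analytic engine is a single--step bound for the map $g\mapsto q\,P_{k,\theta}(g)$. Combining the weighted $L^2$ (Agmon--Hörmander / Sylvester--Uhlmann type) estimate $\norm{\jp{D}^s P_{k,\theta}g}_{L^2_{-\delta}}\le C k^{s-1}\norm{g}_{L^2_\delta}$, valid for $\delta>1/2$, $0\le s\le 1$, uniformly in $\theta\in\SP^{n-1}$ and $k\ge1$, with the Sobolev embedding $W^{\beta,2}(\RR^n)\hookrightarrow L^r$, $1/r=\max(0,\tfrac12-\tfrac{\beta}{n})$, and the Kenig--Ruiz--Sogge uniform $L^p$--$L^{p'}$ resolvent bounds to absorb the low integrability of $q$ when $0\le\beta<n/2$, one shows that multiplication by $q$ followed by $P_{k,\theta}$ sends a fixed weighted Sobolev space into itself with operator norm $\le C\,k^{-\delta_0}$, where $\delta_0:=1-\tfrac{n-1}{2}\max(0,\tfrac12-\tfrac{\beta}{n})$. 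Iterating $j-1$ times (spending one $\norm{q}$ at each step, with the last factor of $q$ contributing its $\beta$ derivatives) gives, for admissible $\sigma$,
\[ \norm{\jp{D}^{\sigma}F^{\,j}_{k,\theta}}_{L^2}\le C^{\,j}\,\norm{q}_{W^{\beta,2}_{2}}^{\,j}\,k^{\,\sigma-\beta-(j-1)\delta_0}, \]
so one genuinely gains $j-1$ powers of $k$ from the $j-1$ resolvents, and the $\tfrac{n-1}{2}\max(0,\cdot)$ term is the unavoidable cost of each resolvent application in the low--regularity regime. This iterated low--regularity estimate — getting the exact exponent $\delta_0$ per step, uniformly in $\theta$ and $k$, and threading it through an $\eta$--dependent frequency $k=k(\eta,\theta)$ — is the main obstacle; everything around it is bookkeeping. (For $j=2$ a sharper version of this step is Theorem \ref{teo.Q2}, but for general $j$ the cruder bound above is what we use.)

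To turn the pointwise identity $B_{\theta,j}(q)(\eta)=\widehat{F^{\,j}_{k(\eta,\theta),\theta}}(\eta)$ into an $L^2$ bound with weight $\jp{\eta}^\alpha$, I would decompose $H_\theta\cap\{|\eta|>C_0\}$ dyadically in $|\eta|\sim R$ and, within each annulus, in the size $k(\eta,\theta)\sim\Lambda\ge R$; the range $\Lambda\gg R$ only occurs in a slab of thickness $\sim R^2/\Lambda$ around $\{\eta\cdot\theta=0\}$, where the bounds are only better since $k$ is larger. On each piece change variables $\eta\leftrightarrow(k,\theta')$ via $\eta=k(\theta'-\theta)$ (Jacobian comparable to $k^{n-1}$ up to the degeneration near $\theta'=\theta$), estimate $|\widehat{F^{\,j}_{k,\theta}}(\eta)|$ using the previous step together with the $L^2$--$L^\infty$ interplay afforded by $\supp F^{\,j}_{k,\theta}\subset\supp q$, and sum the two geometric series in $R$ and $\Lambda$. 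The constraint $\alpha<\alpha_j$ with $\alpha_j$ as in (\ref{eq.alphaj}) is precisely what makes both sums converge, and the extra $-1/2$ in $\alpha_j$ is the loss incurred in this $L^2$--in--$\eta$ passage (a change--of--variables / trace loss, not a cancellation argument). Doing the same for $B_{-\theta,j}$ and adding yields $\widetilde Q_{\theta,j}(q)\in W^{\alpha,2}(\RR^n)$ for all $\alpha<\alpha_j$.

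Finally, the hypothesis $\beta\ge\max(0,m)$ with $m$ as in (\ref{eq.m}) forces $\delta_0>0$: one checks that $\delta_0>0$ is equivalent to $\beta>n(n-5)/(2(n-1))$, and $m$ exceeds that threshold for every $n\ge2$. Hence $\alpha_j=\beta-\tfrac12+(j-1)\delta_0\to\infty$, so for any given $\alpha>0$ there is $l=l(n,\alpha,\beta)$ with $\alpha<\alpha_j$ for all $j\ge l$, and each term of $\sum_{j\ge l}\widetilde Q_{\theta,j}(q)$ lies in $W^{\alpha,2}(\RR^n)$. For absolute convergence I would track the $C_0$--dependence: since $\widetilde Q_{\theta,j}(q)$ is frequency--supported in $\{|\eta|>C_0\}$, each of the $j-1$ resolvents in $F^{\,j}_{k,\theta}$ acts at scale $k\ge C_0$, so beyond the $\alpha$ derivatives actually needed one has spare negative powers of $C_0$; quantitatively $\norm{\widetilde Q_{\theta,j}(q)}_{W^{\alpha,2}}\le A^{\,j}\norm{q}_{W^{\beta,2}_{2}}^{\,j}\,C_0^{-\varepsilon(j-l)}$ for suitable $A=A(n,\alpha,\beta,\supp q)$ and $\varepsilon=\varepsilon(n,\beta)>0$. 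Using $\norm{q}_{W^{\beta,2}_{2}}\le C(\supp q)\norm{q}_{W^{\beta,2}}$ and choosing $C_0=\big(2A\,C(\supp q)\norm{q}_{W^{\beta,2}}\big)^{1/\varepsilon}$ makes the right--hand side a summable geometric series, which finishes the argument; by (\ref{eq.BjF}) the identical reasoning applies to $\widetilde Q_{F,j}$ after averaging in $\theta$.
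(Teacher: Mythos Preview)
Your approach is essentially that of the paper (which in turn follows Ruiz \cite{R}): conjugate the resolvent to $P_{k,\theta}$, iterate a single-step bound gaining $k^{-\delta_0}$ per application, and then convert the pointwise identity $B_{\theta,j}(q)(\eta)=\widehat{F^{\,j}_{k(\eta,\theta),\theta}}(\eta)$ into an $L^2_\alpha$ bound in $\eta$. The paper packages this last passage as the direct change of variables $\eta\leftrightarrow(k,\theta')$ followed by a trace/restriction estimate on the sphere in $\theta'$ for each fixed $k$, arriving at $\norm{\widetilde Q_{\theta,j}(q)}_{W^{\alpha,2}}^2\le C^{2j}\norm{q}_{W^{\beta,2}}^{2j}\int_{C_0}^\infty k^{\,n-1+2\alpha-2\gamma}\,dk$; your dyadic decomposition in $(|\eta|,k)$ is an alternative bookkeeping for the same computation, and your attribution of the $-\tfrac12$ in $\alpha_j$ to a trace-type loss is on target.

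One point needs correcting: your justification of the hypothesis $\beta\ge m$ is incomplete. You verify only that $\beta\ge m$ forces $\delta_0>0$ (so that $\alpha_j\to\infty$), but $\delta_0>0$ is equivalent to $\beta>\tfrac{n(n-5)}{2(n-1)}$, which is strictly weaker than $\beta\ge m=\tfrac{n(n-3)}{2(n+1)}$ for every $n\ge 2$. The actual origin of $m$ is the Kenig--Ruiz--Sogge exponent restriction $\tfrac1p-\tfrac1{p'}\le\tfrac{2}{n+1}$ on the uniform $L^p\to L^{p'}$ resolvent bounds: combining it with the Sobolev embedding $W^{\beta,2}\hookrightarrow L^r$, $\tfrac1r=\tfrac12-\tfrac{\beta}{n}$, forces $\tfrac12-\tfrac{\beta}{n}\le\tfrac{2}{n+1}$, i.e.\ $\beta\ge m$. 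In the paper this surfaces as the constraints on the auxiliary exponents $t_\ell,r_\ell$. Thus $\beta\ge m$ is required already for your single-step bound to hold with the claimed $\delta_0$, not merely for the tail of the series to converge.
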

In dimension $n>4$ this result gives  convergence of the tail of the high frequency Born series provided $q \in W^{\beta,2}(\RR^n)$ with $\beta>m>(n-4)/2$. This means that $q$ must have more a priori regularity as $n$ increases. In part, this is to be expected, since the condition $q \in L^r$, $r>n/2$ is necessary for the existence of scattering solutions, as mentioned previously. This also suggests that for high dimension it could be more adapted to the problem at hand to measure  the regularity of $q$ (and the regularity gain) in the space $W^{\beta,r}(\RR^n)$, for some $r\ge n/2$ and instead of in $W^{\beta,2}(\RR^n)$, but this is a more difficult problem. 

Proposition \ref{prop.Qj} is originally from \cite{R}, where it is proved  for Sobolev spaces with more general $p$. Since we have made a slight modification of the result in \cite{R} to consider also the case of $\beta \ge n/2$, for the interested reader we give in the appendix some indications about its proof. 

\begin{remark} \label{remark2}
 As noticed already in \cite{BFRV10}, by the definition of the $\widetilde Q_{F,j}$ operator as an average in $\theta$ of  $\widetilde Q_{\theta,j}$, Proposition   \ref{prop.Qj} gives us an equivalent result for the convergence of the series  $ \sum_{j=l}^{\infty} \widetilde Q_{F,j}(q)$ (this follows from the fact that the estimates in \cite{R}  of the $\widetilde Q_{\theta,j}$ operators are uniform on $\theta$).
 \end{remark}
 
 We can now reduce the proof of Theorem \ref{teo.recovery} to proving Theorem $\ref{teo.Q2}$. 
\begin{proof}[Proof of Theorem $\ref{teo.recovery}$]
Taking the inverse Fourier transform of (\ref{eq.int.5}), we have that, {\it modulo a $C^\infty$ function}
\begin{equation} \label{eq.seriesT}
q(x) =  q_\theta(x) - \widetilde{Q}_{\theta,2}(q)(x)  -\sum_{j=3}^{\infty} { \widetilde{Q}_{\theta,j}(q)(x)}.
\end{equation}
By the previous proposition, if $\max(0,m) \le \beta< \infty$, the series on the left-hand side converges in the Sobolev space $W^{\alpha,2}(\RR^n)$ with $\alpha<\alpha_3$ given by (\ref{eq.alphaj}). On the other hand, when $\max(0,(n-3)/2) < \beta<\infty$, we have that $\alpha_3 \ge \min(1,\beta-(n-3)/2)$, which is the upper bound for the regularity of $\widetilde{Q}_{\theta,2}(q)$ given by (\ref{eq:rangepos}). Therefore Theorem \ref{teo.Q2} implies that, modulo a $C^{\infty}$ function, $q-q_\theta \in W^{\alpha,2}(\RR^n)$ with $\alpha$ satisfying (\ref{eq:rangepos}).
\end{proof}

To prove Theorem \ref{teo.Q2} we provide an explicit formula for $Q_{\theta,2}(q)$ in the Fourier transform side. 
Let $\zeta \in \RR^n$ and $r\in (0,\infty)$. We define the modified Ewald spheres
\begin{equation*} 
\Gamma_r(\zeta):=\{\xi\in\RR^n: |\xi-\zeta/2| = r|\zeta/2|\},
\end{equation*} 
and we denote by $\sigma_{rk}$ their Lebesgue measure.
\begin{proposition} \label{prop.fixangleformula}
Let $\theta \in \SP^{n-1}$ and $\eta \in \RR^n$. Then we have that
\begin{equation}  \label{eq:Bformula}
B_{\theta,2}(q)(\eta) = \chi_{H_\theta}(\eta) \left[ i \pi S_{\theta,1}(q)(\eta) + P_\theta(q)(\eta)\right],
\end{equation}
where
\begin{equation} \label{eq.S}
S_{\theta,r}(q)(\eta) :=  \frac{1}{k(r+1)}\int_{\Gamma_r(-2k\theta)}  \widehat{q}(\xi) \widehat{q}(\eta-\xi) \, d\sigma_{rk}(\xi),
\end{equation}
 with $k=k(\eta,\theta)$ given by $(\ref{eq.keta})$, $r\in (0,\infty)$,  and
\begin{equation} \label{eq.P}
{P_\theta(q)}(\eta) := \pv \int_{0}^\infty \frac{1}{1-r}{S_{\theta,r}(q)}(\eta) \,dr.
\end{equation}
\end{proposition}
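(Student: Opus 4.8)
The plan is to start from the Born series term \(B_{\theta,2}(q)(\eta)\), which by \eqref{eq.B} is
\[ B_{\theta,2}(q)(\eta)=\int_{\RR^n} e^{-ik\theta'\cdot y}\, q(y)\, R_k\big(q\, e^{ik\theta\cdot(\cdot)}\big)(y)\,dy \]
for \(\eta\in H_\theta\), with \(k=k(\eta,\theta)\), \(\theta'=\theta'(\eta,\theta)\) as in \eqref{eq.keta}, and zero otherwise. The first step is to write everything on the Fourier side: expand \(q(y)=\int \widehat q(\zeta)e^{iy\cdot\zeta}\,d\zeta\) (up to the usual \((2\pi)^{-n}\) normalisation), use the resolvent symbol \eqref{eq:resolvent}, and recognise that \(e^{-ik\theta'\cdot y}e^{ik\theta\cdot y}=e^{-i(k\theta'-k\theta)\cdot y}=e^{-i\eta\cdot y}\) because \(\eta=k(\theta'-\theta)\). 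After performing the \(y\)-integral one collapses two of the frequency integrals with the resulting delta functions and is left with a single integral of \(\widehat q(\xi)\widehat q(\eta-\xi)\) against the resolvent multiplier \((k^2-|\,\xi-k\theta\,|^2+i0)^{-1}\) — or some affine reparametrisation thereof — integrated over \(\xi\in\RR^n\). The bookkeeping of which variable plays which role, and getting the shift \(\xi\mapsto\xi-k\theta\) (or \(\xi+k\theta\)) correct, is the first place to be careful.

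The second step is to analyse that denominator. Writing \(k^2-|\xi-k\theta|^2 = -|\xi|^2+2k\,\theta\cdot\xi = -|\xi|^2-|\eta|^2\,\frac{\theta\cdot\xi}{\theta\cdot\eta}\) after substituting \(k=k(\eta,\theta)=-|\eta|^2/(\theta\cdot\eta)\), one sees that the zero set \(\{k^2=|\xi-k\theta|^2\}\) is exactly a sphere through the origin: the sphere \(|\xi-k\theta|=k\), i.e. \(\xi\) lies on a sphere of radius \(k|\theta|=k\) centred at \(k\theta\), which is the ``Ewald sphere'' \(\Gamma_1(-2k\theta)\) in the paper's normalisation (one checks \(\Gamma_r(\zeta)=\{|\xi-\zeta/2|=r|\zeta/2|\}\) with \(\zeta=-2k\theta\), \(r=1\) gives \(|\xi+k\theta|=k|{-}k\theta|/|k|=k\); possibly a sign convention on \(\theta\) flips this to \(\Gamma_r(-2k\theta)\) — this must be matched to the statement). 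The key analytic input is then the Sokhotski–Plemelj formula: \((A-t+i0)^{-1}\) as a one-dimensional distribution in the variable transverse to the sphere decomposes as a principal value plus \(-i\pi\,\delta\). Applying this with the appropriate coarea/slicing of \(\RR^n\) by the dilated spheres \(\Gamma_r(-2k\theta)\), \(r\in(0,\infty)\), the delta part produces a surface integral over \(\Gamma_1(-2k\theta)\), which up to the Jacobian of the slicing is \(i\pi\,S_{\theta,1}(q)(\eta)\), and the principal-value part produces \(\pv\int_0^\infty (1-r)^{-1}S_{\theta,r}(q)(\eta)\,dr = P_\theta(q)(\eta)\). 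The factor \(1/(k(r+1))\) in the definition \eqref{eq.S} of \(S_{\theta,r}\) should emerge precisely as this Jacobian: parametrising \(\RR^n\) by \((r,\omega)\in(0,\infty)\times\Gamma_1\) via \(\xi\in\Gamma_r\) and computing how \(k^2-|\xi-k\theta|^2\) depends on \(r\) near \(r=1\) gives the \(k(r+1)\) (at \(r=1\), \(k(r+1)=2k\), consistent with the gradient of \(|\xi-k\theta|^2\) having size \(2k\) on the sphere).

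So the main obstacle — really the only non-routine part — is carrying out the slicing of the \(\xi\)-integral by the family \(\{\Gamma_r(-2k\theta)\}_{r>0}\) and verifying that (i) the level-set function whose \(i0\)-regularised reciprocal appears is, after reparametrisation, exactly \(1-r\) (times a smooth nonvanishing factor that gets absorbed into the measure), and (ii) the coarea Jacobian is exactly \(1/(k(r+1))\) relative to the measure \(d\sigma_{rk}\). Once the change of variables is set up correctly this is a one-line application of Fubini plus Sokhotski–Plemelj in the \(r\) variable; the risk is purely in tracking constants, signs, and the orientation/normalisation conventions for \(d\sigma_{rk}\). I would therefore first do the computation formally with \(q\in C_c^\infty\) (so all integrals converge absolutely away from the singular sphere and the \(i0\) limit is classical), record the identity \eqref{eq:Bformula}, and only at the end remark that both sides extend continuously to the relevant \(q\in W^{\beta,2}\) with the cutoff \(\chi(\eta)\) in place, so that the formula holds in the generality needed for Theorem~\ref{teo.Q2}. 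The overall factor \(\chi_{H_\theta}(\eta)\) is immediate from the definition \eqref{eq.B} of \(B_{\theta,j}\), which vanishes off \(H_\theta\).
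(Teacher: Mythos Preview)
Your plan is correct and follows essentially the same route as the paper: pass to the Fourier side, recognise the denominator as $k^2-|\xi+k\theta|^2$ (centred at $-k\theta$, which resolves your flagged sign ambiguity), split via Sokhotski--Plemelj into a delta on the Ewald sphere plus a principal value, and rewrite the latter in spherical coordinates about $-k\theta$ with the radial substitution $t=rk$ to obtain the $(1-r)^{-1}$ and the Jacobian $1/(k(1+r))$. The only cosmetic difference is that the paper invokes the Sokhotski--Plemelj splitting already packaged as a formula for $R_k(f)(x)$ before inserting it into \eqref{eq.B}, whereas you apply it after collapsing to the single $\xi$-integral; the computations are otherwise identical.
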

\begin{proof}
The resolvent of the Laplacian satisfies the formula
\begin{equation*}
R_k(f)(x) = i \frac{\pi}{2}k^{n-2} \int_{S^{n-1}} \widehat{f}(k\omega) e^{ik x \cdot \omega} \, d\sigma(\omega) + \pv \int_{\RR^n} e^{i x \cdot \zeta} \frac{\widehat{f}(\zeta)}{-|\zeta|^2 + k^2} \, d\zeta.
\end{equation*}
This follows from computing the limit implicit in  (\ref{eq:resolvent}) in the sense of distributions; see, for  example, \cite[Chapter 5]{notasR} and \cite[pp. 209-236]{GS}. Using this formula in (\ref{eq.B}) for $j=2$, and computing the resulting Fourier transform in the $y$ variable, we get for $\eta \in H_\theta$,  $k= k(\eta,\theta)$, and $\theta' =\theta'(\eta,\theta)$ that
\begin{align*}
{B_{\theta,2}(q)}(\eta) &= i \frac{\pi}{2}k^{n-2}\int_{\RR^n} e^{-ik\theta' \cdot y}q(y) \int_{S^{n-1}} \widehat{q}(k\omega -k\theta) e^{ik y \cdot \omega} \, d\sigma(\omega) \,dy\\
& \hspace{15mm} +\int_{\RR^n} e^{-ik\theta' \cdot y}q(y) \pv \int_{\RR^n}  e^{i y \cdot \zeta} \frac{\widehat{q}(\zeta-k\theta)}{-|\zeta|^2 + k^2} \, d\zeta \,dy\\
&=  i \frac{\pi}{2}k^{n-2} \int_{S^{n-1}} \widehat{q}(k\theta'-k\omega) \widehat{q}(k\omega -k\theta) \, d\sigma(\omega)\\
&\hspace{15mm}+ \pv \int_{\RR^n}  \widehat{q}(k\theta'-\zeta) \frac{\widehat{q}(\zeta-k\theta)}{-|\zeta|^2 + k^2} \, d\zeta.
\end{align*}
If we put $\xi = k(\omega-\theta)$ in the  first integral and  $\xi = \zeta -k\theta$ in the second, then
\begin{equation*}
{B_{\theta,2}(q)}(\eta) = i \pi S_{\theta, 1}(q)(\eta) - \pv \int_{\RR^n}  \frac{\widehat{q}(\xi)\widehat{q}(\eta-\xi)}{\xi \cdot (\xi+2k\theta)} \, d\xi,
\end{equation*} 
when $\eta \in H_\theta$. Now, notice that
\[-\xi \cdot (\xi+2k\theta) =k^2-|\xi - (-k\theta)|^2 =(k-|\xi - (-k\theta)|) (k+|\xi - (-k\theta)|).\]
We  are going to take spherical coordinates with a radial parameter $t$ around the point $-k\theta$ in the principal value integral, denoting by $B_t$  the sphere of center $-k\theta$ and radius $t$, and by $\sigma_t$ its Lebesgue measure. Hence, if we use the change of  variables $t=rk$, $r \in (0,\infty)$,  in the radial variable, we obtain
\begin{align*}
\nonumber &-\pv \int_{\RR^n}  \frac{\widehat{q}(\xi)\widehat{q}(\eta-\xi)}{\xi \cdot (\xi+2k\theta)} \, d\xi  = \pv \int_{0}^\infty \frac{1}{\left(k-t \right)\left( k+t \right)}\int_{\partial B_t}  \widehat{q}(\xi) \widehat{q}(\eta-\xi) \, d\sigma_t(\xi) \,dt  \\
 &=  \pv \int_{0}^\infty \frac{1}{1-r} \frac{1}{k(1+r)} \int_{\Gamma_{r}(-2k\theta)}\widehat{q}(\xi) \widehat{q}(\eta-\xi) \, \ds{r}(\xi) \,dr \\
&= \pv \int_{0}^\infty \frac{1}{1-r} S_{\theta, r}(q)(\eta) \,dr. 
\end{align*}
\end{proof}

\section{Estimates of the spherical operator} \label{sec.3}

In order to  prove the estimates of the $\widetilde Q_{\theta,2}$ operator given by Theorem \ref{teo.Q2}, we are going to bound the $B_{\theta,2}$ operators, introduced in the previous section, with the help of Proposition \ref{prop.fixangleformula}.
We begin in this section  with estimates  for the spherical operator defined in (\ref{eq.S}). These estimates will be useful in the following section to bound the operator $P_\theta$ given in (\ref{eq.P}).
To simplify notation, as in (\ref{eq.cutoff2}), we define
\begin{equation}  \label{eq.SyPtilde} \widetilde S_{\theta,r}(q)(\eta) := {\chi}(\eta)S_{\theta,r}(q)(\eta), \hspace{4mm} \text{and} \hspace{4mm} \widetilde P_\theta(q)(\eta) := {\chi}(\eta)P_\theta(q)(\eta).
\end{equation}
\begin{lemma} \label{lemma.QspheL2}
Let $n\ge 2$ and  $q\in W_1^{\beta,2}(\RR^n)$ with $\beta\ge 0$.  Then if $0 \le \varepsilon<1$, the estimate
\begin{equation} \label{eq.PV1}
\norm{k^{\varepsilon} \widetilde S_{\theta,r}(q)}_{ L^2_\alpha}\le C{(1+r)^{-\gamma} } \norm{q}_{W_1^{\beta,2}}^2, 
\end{equation}
holds when
\begin{equation} \label{eq.range.Q2}
\begin{cases}
 \alpha \le \beta +(\beta- (n-3)/2)-\varepsilon, \hspace{3mm} if \hspace{3mm} (n-3)/2 +\varepsilon<\beta < (n-1)/2,\\
 \alpha<\beta +1-\varepsilon, \hspace{25mm} if \hspace{3mm} (n-1)/2  \le \beta<\infty ,
         \end{cases} 
\end{equation}
for some real number $\gamma >0$ (possibly depending on $\beta$).
\end{lemma}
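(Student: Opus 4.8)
The plan is to estimate the bilinear spherical form
$S_{\theta,r}(q)(\eta) = \frac{1}{k(r+1)}\int_{\Gamma_r(-2k\theta)}\widehat q(\xi)\widehat q(\eta-\xi)\,d\sigma_{rk}(\xi)$
by reducing it, via an $L^2$-duality argument in $\eta$, to a weighted convolution-type estimate for the restriction of $\widehat q \otimes \widehat q$ to the family of spheres $\Gamma_r(-2k\theta)$. First I would record the basic geometry: since $\eta \in \supp\chi$ forces $|\eta|>C_0>2k_0$, and since $k=k(\eta,\theta)=-|\eta|^2/(\theta\cdot\eta)$ satisfies $2k \ge |\eta|$, the sphere $\Gamma_r(-2k\theta)$ has center $-k\theta$, radius $rk$, and its surface measure is comparable to $(rk)^{n-1}$. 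On this sphere one has the pointwise weight bound coming from $q\in W^{\beta,2}_1$: write $\widehat q(\xi) = \jp{\xi}^{-\beta}g(\xi)$ where $g = \jp{D}^{\beta}q$ has a weight in $L^2$; but because of the weight $\jp{\cdot}^1$ in $W^{\beta,2}_1$, the function $\widehat q$ is actually (locally) a $C^1$ function decaying like $\jp{\xi}^{-\beta}$ in an $L^2$-averaged sense on spheres — this is exactly the kind of control one needs to integrate over $(n-1)$-dimensional surfaces. I would make this precise with a trace/restriction lemma (presumably one of the "technical results" in the appendix): for $F\in W^{s,2}_1(\RR^n)$ with $s>1/2$, the restriction of $F$ to a sphere of radius $\rho$ lies in $L^2$ of that sphere with a norm controlled by $\rho^{(n-1)/2}\jp{\rho}^{-\text{(gain)}}\norm{F}_{W^{s,2}_1}$, uniformly in the center.

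Next I would set up the duality. Fix $\psi \in L^2_{-\alpha}$, i.e. $\jp{\cdot}^{-\alpha}\psi \in L^2$, and estimate $\int \jp{\eta}^{\alpha}\chi(\eta)\,k(\eta,\theta)^{\varepsilon}\,S_{\theta,r}(q)(\eta)\,\overline{\psi(\eta)}\,d\eta$. Parametrizing $\Gamma_r(-2k\theta)$ and using Fubini, this becomes an integral in the two "frequency" variables $\xi$ and $\eta-\xi =: \zeta$, with $\widehat q(\xi)\widehat q(\zeta)$ against a kernel $K(\xi,\zeta)$ built from $\jp{\xi+\zeta}^{\alpha}\chi(\xi+\zeta)k^{\varepsilon-1}(r+1)^{-1}$ times the Jacobian of the spherical parametrization and $\overline{\psi(\xi+\zeta)}$. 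The point is that on the sphere the constraint $\xi\cdot(\xi+2k\theta)=0$ links $|\xi|$, $|\zeta|$ and $k$; one reads off $k \sim |\eta|^2/(|\eta| ) \sim |\eta|$ up to the angle, and $|\xi|,|\zeta| \lesssim k \lesssim |\eta|$. I would then distribute the weight $\jp{\eta}^{\alpha}\lesssim \jp{\xi}^{\alpha_1}\jp{\zeta}^{\alpha_2}$ with $\alpha_1+\alpha_2=\alpha$ chosen to match the decay $\jp{\xi}^{-\beta}\jp{\zeta}^{-\beta}$ available from $q$, apply Cauchy–Schwarz in $\xi$ (resp. $\zeta$) using the restriction lemma to absorb the sphere integration, and be left with $\norm{q}_{W^{\beta,2}_1}^2\norm{\psi}_{L^2_{-\alpha}}$ times a scalar factor that is a power of $k$ and $(1+r)$. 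Tracking the exponents: the prefactor $k^{\varepsilon-1}$, the measure weight $(rk)^{n-1}$, the two losses of $\jp{\cdot}^{(n-1)/2}$ from restricting to spheres of radius $\sim rk$, and the gain $2\beta$ from the two decay factors, should combine to give integrability in $\eta$ precisely when $\alpha$ obeys \eqref{eq.range.Q2}, with a surplus power of $(1+r)$ that yields the claimed $(1+r)^{-\gamma}$ decay (the two regimes in \eqref{eq.range.Q2} correspond to whether one can afford $\alpha_i > (n-3)/2$ on each factor, forcing the piecewise description).

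The main obstacle, I expect, is the uniformity in $r$ and the sharp bookkeeping of powers of $k$ near the degenerate geometry. When $r$ is close to $1$ the sphere $\Gamma_r(-2k\theta)$ passes near the origin in $\xi$ and near $\eta$ in $\zeta$, so the distribution of the weight between the two factors must be done carefully to avoid a logarithmic or polynomial loss; when $r\to 0$ or $r\to\infty$ the measure factor $(rk)^{n-1}$ and the $k(r+1)$ in the denominator compete, and one must check the $(1+r)^{-\gamma}$ decay survives rather than just boundedness. Also, the low-regularity endpoint $\beta$ near $(n-3)/2$ is where the sphere-restriction step is borderline (one needs $s>1/2$, and the effective regularity on each factor is $\beta$ minus the weight one borrows for $\jp{\eta}^\alpha$), which is precisely why the first case of \eqref{eq.range.Q2} is a closed inequality $\alpha \le \dots$ rather than strict. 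I would handle the borderline by exploiting the extra weight $\jp{\cdot}^1$ in $W^{\beta,2}_1$ to buy back the needed fraction of a derivative in the trace estimate, and by splitting the $r$-integration into $r\sim 1$, $r\ll 1$ and $r\gg 1$ and treating each regime with the appropriate choice of $\alpha_1,\alpha_2$.
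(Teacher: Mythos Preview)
Your outline diverges from the paper's argument at the central technical step, and as written it has a genuine gap.

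The paper does not use duality. It computes $\norm{k^\varepsilon \widetilde K_r(\widehat{f_1},\widehat{f_2})}_{L^2_\alpha}^2$ directly: first it uses $|\eta|\le 2k$ to trade $k^{\lambda-1}$ for $\jp{\eta}^{\lambda-1}$ (with the parameter $\lambda$ fixed by $\beta=\alpha-1+\lambda$), then passes the weight $\jp{\eta}^{\beta}$ onto $\jp{\xi}^{\beta}$ or $\jp{\eta-\xi}^{\beta}$. The crucial move is a Cauchy--Schwarz on the Ewald sphere with the \emph{auxiliary} weight $|\eta-\xi|^{(n-1)/2-(\lambda-\varepsilon)}$. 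This produces two factors: one is the pure spherical integral $\int_{\Gamma_r(-2k\theta)} |\eta-\xi|^{-(n-1)+2(\lambda-\varepsilon)}\,d\sigma_{rk}$, bounded by $C(rk)^{2(\lambda-\varepsilon)}$ via an elementary lemma; the other contains the full product $|\widehat{f_1}(\xi)|^2\jp{\xi}^{2\beta}|\widehat{f_2}(\eta-\xi)|^2\jp{\eta-\xi}^{\cdots}$ and is bounded by the trace inequality $\int_{\SP_\rho}|F|^2\,d\sigma_\rho\le \norm{F}_{W^{1,2}}^2$, whose constant is $1$, independent of radius and center. This last step is the whole point: it erases the $\eta$-dependent sphere constraint, after which a genuine Fubini in $(\eta,\xi)\in\RR^n\times\RR^n$ separates the two Sobolev norms. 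The $(1+r)^{-\gamma}$ decay then falls out with no case-splitting in $r$: combine the explicit $(1+r)^{-1}$ in $S_{\theta,r}$ with the factor $(1+r)^{\lambda}$, $\lambda<1$, from $K_r$.

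Your scheme---pair with $\psi$, parametrize $\Gamma_r(-2k\theta)$ by $\omega\in\SP^{n-1}$, Fubini, then ``Cauchy--Schwarz in $\xi$ (resp.\ $\zeta$)''---does not go through because the sphere moves with $\eta$: for fixed $\omega$ the map $\eta\mapsto\xi=k(\eta)(r\omega-\theta)$ has image in a single ray, so $\xi$ and $\zeta=\eta-\xi$ are never independent $\RR^n$-variables and there is no Cauchy--Schwarz ``in $\xi$'' to apply. Relatedly, the restriction lemma you posit (for $F\in W^{s,2}_1$, $s>1/2$, with operator norm $\rho^{(n-1)/2}\jp{\rho}^{-\text{gain}}$) is not what is in the appendix and is not what is needed; the actual ingredient is the $W^{1,2}\to L^2(\SP_\rho)$ trace with constant independent of $\rho$, and it is applied to the \emph{product} of the two factors, not to each separately. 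Consequently your exponent bookkeeping (a surface weight $(rk)^{n-1}$ plus ``two losses of $\jp{\cdot}^{(n-1)/2}$'') does not match the mechanism that actually closes the estimate.
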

To simplify later computations we define the operator  
\[ {\widetilde{K}_{r}(g_1,g_2)(\eta)} = {\chi}(\eta){K}_{r}(g_1,g_2)(\eta),\]
where
\begin{equation*} 
{{K}_{r}(g_1,g_2)}(\eta) := \frac{1}{k}\int_{\Gamma_{r}(-2k\theta)}|g_1(\xi)| |g_2(\eta-\xi)| \, \ds{r}(\xi).
\end{equation*}
Then we have that
\[
\left| \widetilde{S}_{\theta,r}(q)(\eta) \right| \le  \frac{1}{1+r}{\widetilde{K}_{r}(\widehat{q},\widehat{q})(\eta)},
\]
and therefore the proof of Lemma \ref{lemma.QspheL2} is an immediate consequence of the following lemma taking $\gamma =1- \lambda$. 
\begin{lemma} \label{lemma.K.Q2}
Let $n\ge 2$ and $f_1,f_2 \in W_1^{\beta,2}(\RR^n)$ with $\beta\ge 0$. Then if $0\le \varepsilon<1$, the estimate
\begin{equation} \label{eq.thm.sph.1}
\norm{k^\varepsilon \widetilde K_{r}(\widehat{f_1},\widehat{f_2})}_{ L^2_\alpha}\le C (1+r)^{\lambda} \norm{f_1}_{W_1^{\beta,2}} \norm{f_2}_{W_1^{\beta,2}},
\end{equation}
 holds when $\alpha$ is in the range given in $(\ref{eq.range.Q2})$, for some real number  $0<\lambda<1$ (possibly depending on $\beta$).
\end{lemma}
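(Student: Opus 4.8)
The plan is to estimate the bilinear form $\widetilde K_r(\widehat{f_1},\widehat{f_2})$ by exploiting the geometry of the sphere $\Gamma_r(-2k\theta)$, which has center $-k\theta$ and radius $rk$. The key observation is that on $\Gamma_r(-2k\theta)$ one has $|\xi+k\theta| = rk$, so that $|\xi|$ and $|\eta-\xi|$ are comparable to $k\max(1,r)$ up to lower-order contributions near the ``poles'' of the sphere; this lets us trade the Bessel weights $\jp{\xi}^\beta$ and $\jp{\eta-\xi}^\beta$ sitting inside $\|f_i\|_{W^{\beta,2}_1}$ for powers of $k$ and $r$, which is exactly the source of the $k^\varepsilon$ loss and the $(1+r)^\lambda$ growth. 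First I would reduce to $L^2_\alpha$ by writing $\|k^\varepsilon \widetilde K_r(\widehat{f_1},\widehat{f_2})\|_{L^2_\alpha}^2 = \int \jp{\eta}^{2\alpha} k(\eta,\theta)^{2\varepsilon} \chi(\eta)^2 |K_r(\widehat{f_1},\widehat{f_2})(\eta)|^2\, d\eta$, recalling $k(\eta,\theta) = -|\eta|^2/(\theta\cdot\eta) \sim |\eta|$ for $\eta$ in a fixed cone inside $H_\theta$ but possibly much larger than $|\eta|$ near $\partial H_\theta$; the support cut-off $\chi(\eta)$ ensures $|\eta|>C_0$ but does \emph{not} control $k/|\eta|$, so the half-space geometry must be handled with care (this is one delicate point, typically absorbed by the measure of the degenerate region).

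The core of the argument is a pointwise bound on $K_r(\widehat{f_1},\widehat{f_2})(\eta)$. I would apply Cauchy--Schwarz on the sphere $\Gamma_r(-2k\theta)$ with respect to $d\sigma_{rk}$, splitting the integrand as $|\widehat{f_1}(\xi)\jp{\xi}^{\beta}\jp{\xi}^{1}|\cdot\big(|\widehat{f_2}(\eta-\xi)|\jp{\xi}^{-\beta-1}\big)$ — or some balanced variant distributing the $\beta$ and the weight-$1$ between the two factors — so that one factor carries the full $W^{\beta,2}_1$ content of $f_1$ restricted to the sphere and the other is an explicit kernel to be integrated. This produces
\[
|K_r(\widehat{f_1},\widehat{f_2})(\eta)|^2 \lesssim \frac{1}{k^2}\Big(\int_{\Gamma_r} |\jp{\xi}^{\beta+1}\widehat{f_1}(\xi)|^2\, d\sigma_{rk}\Big)\Big(\int_{\Gamma_r} \jp{\xi}^{-2\beta-2}|\widehat{f_2}(\eta-\xi)|^2\, d\sigma_{rk}\Big),
\]
and then I would integrate in $\eta$, use Fubini, and in the $\eta$-integral of the second factor perform a change of variables (the coarea/fibering of the family of Ewald spheres, or a direct computation using that $\eta\mapsto (\xi,\eta-\xi)$ with $\xi\in\Gamma_r(-2k(\eta,\theta)\theta)$ parametrizes a neighborhood of the relevant set) to convert $\int\jp{\eta}^{2\alpha}k^{2\varepsilon}\cdots d\eta$ into an $L^2$ norm of $\widehat{f_2}$ against an explicit Jacobian weight. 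Matching the surviving powers of $\jp{\cdot}$ on the $f_1$ and $f_2$ sides against $\jp{\cdot}^{2\beta}\jp{\cdot}^2$ and the $\jp{\eta}^{2\alpha}k^{2\varepsilon}$ we put in is precisely what forces the condition $\alpha \le \beta + (\beta-(n-3)/2)-\varepsilon$ in the low-regularity range (where the kernel $\int_{\Gamma_r}\jp{\xi}^{-2\beta-2}\,d\sigma_{rk}\sim (rk)^{n-1}(k\max(1,r))^{-2\beta-2}$ is only summable with a deficit measured by $n-3$) and $\alpha<\beta+1-\varepsilon$ in the high-regularity range (where the gain saturates at one derivative plus the $\varepsilon$ loss).

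The main obstacle I anticipate is \textbf{controlling the behavior near the poles of the Ewald spheres}, i.e.\ the region where $|\xi|$ or $|\eta-\xi|$ is \emph{not} comparable to $k\max(1,r)$ but instead small (the sphere $\Gamma_r(-2k\theta)$ passes through $0$ and through $-2k\theta$). There the cheap bound $|\xi|\sim k$ fails, and one must instead use that the offending portion of the sphere has small surface measure, or localize dyadically in $|\xi|$ and $|\eta-\xi|$ and sum. A second, related obstacle is the uniformity-in-$r$ bookkeeping: the claim is a \emph{single} exponent $0<\lambda<1$ works, and $\lambda$ depends on $\beta$ — near the endpoints $\beta=(n-3)/2$ and $\beta=(n-1)/2$ one must verify that the spherical-measure factors $(1+r)^{\#}$ coming from $|\Gamma_r|\sim (rk)^{n-1}$ versus the denominators $(k(1+r))^{\#}$ combine to a strictly sub-linear power of $(1+r)$; this is where the weight-$1$ (i.e.\ the subscript $1$ in $W^{\beta,2}_1$) is spent, giving an extra $(1+r)^{-\epsilon_0}$ of room. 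Once the pointwise/integrated bound is in place, collecting the powers and choosing $\lambda$ is routine. The final dependence of $\lambda$ on $\beta$ and the strict inequality $\lambda<1$ then feed directly, via $\gamma = 1-\lambda>0$, into Lemma \ref{lemma.QspheL2}.
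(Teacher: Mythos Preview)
Your plan diverges from the paper's proof in a way that creates a genuine gap. The paper's Cauchy--Schwarz on the sphere does \emph{not} separate $f_1$ and $f_2$ into different factors. Instead it multiplies and divides by $|\eta-\xi|^{(n-1)/2-(\lambda-\varepsilon)}$ so that one factor is the purely geometric quantity $\int_{\Gamma_r(-2k\theta)} k^{-2(\lambda-\varepsilon)}|\eta-\xi|^{-(n-1)+2(\lambda-\varepsilon)}\,d\sigma_{rk}$, bounded by $Cr^{2(\lambda-\varepsilon)}$ via Lemma~\ref{lemma.integrals}, while the other factor keeps the full product $|\widehat{f_1}(\xi)|^2\jp{\xi}^{2\beta}|\widehat{f_2}(\eta-\xi)|^2|\eta-\xi|^{n-1-2(\lambda-\varepsilon)}$ on the sphere. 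By contrast, your split produces two factors $A(\eta)=\int_{\Gamma_r}|\jp{\xi}^{\beta+1}\widehat{f_1}|^2\,d\sigma_{rk}$ and $B(\eta)=\int_{\Gamma_r}\jp{\xi}^{-2\beta-2}|\widehat{f_2}(\eta-\xi)|^2\,d\sigma_{rk}$ that \emph{both} depend on $\eta$ through the moving sphere $\Gamma_r(-2k(\eta,\theta)\theta)$; there is no clean Fubini or coarea step that separates them, and taking a supremum of $A(\eta)$ over $\eta$ asks for a trace bound you have not established.

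The tool you are missing is precisely that trace bound: the paper applies the trace theorem on spheres (Proposition~\ref{prop:traceT}, with constant $1$ independent of the radius) to the single remaining spherical integral, converting it into a volume integral over $\RR^n$ in $\xi$ at the cost of one derivative $\nabla_\xi$. After that, Fubini in $(\eta,\xi)$ is immediate. This is also where the subscript $1$ in $W^{\beta,2}_1$ actually enters: $\nabla_\xi\widehat{f}(\xi)=\widehat{-ixf}(\xi)$, so the gradient costs one spatial weight. Your guess that the weight is spent to buy an extra $(1+r)^{-\epsilon_0}$ is not how the argument works. Two smaller points: the relation $\beta=\alpha-1+\lambda$ (with $\varepsilon<\lambda<1$ and $\lambda\le(n-1)/2+\varepsilon$) is what pins down the range \eqref{eq.range.Q2}, not a kernel computation of $\int_{\Gamma_r}\jp{\xi}^{-2\beta-2}\,d\sigma_{rk}$; and your worry about $k\gg|\eta|$ near $\partial H_\theta$ is a non-issue, since the proof only uses $k\ge|\eta|/2$ (hence $k^{\lambda-1}\le C\jp{\eta}^{\lambda-1}$), never an upper bound on $k$. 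The ``pole'' problem you flag is likewise handled automatically by Lemma~\ref{lemma.integrals}, which bounds $\int_{\SP_\rho}|x-y|^{-(n-1)+2\lambda}\,d\sigma_\rho(y)$ uniformly in $x$.
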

In the proof we are going to use the following result. 
\begin{lemma}\label{lemma.integrals}
Let $\SP_\rho\subset\RR^n$ be any sphere of radius $\rho$ and let $\sigma_\rho$ be its Lebesgue measure.
Then for any $0< \lambda\le (n-1)/2$, we have that
\begin{equation*} 
\int_{\SP_\rho} \frac{1}{|x-y|^{(n-1)-2\lambda}} \, d\sigma_\rho(y) \le C_\lambda \rho^{2\lambda},
\end{equation*}
 for any $x\in\RR^n$, and for a constant $C_\lambda$ that only depends on  $\lambda$. 
\end{lemma}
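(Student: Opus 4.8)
The plan is to reduce the estimate to the case of the unit sphere by rescaling, and then to prove a bound that is \emph{uniform} over all base points by the layer–cake formula combined with the (uniform) Ahlfors upper regularity of the sphere.

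First I would write $\SP_\rho = \{c+\rho\omega : \omega\in\SP^{n-1}\}$ for its center $c$, so that under $y = c+\rho\omega$ one has $d\sigma_\rho(y)=\rho^{n-1}\,d\sigma(\omega)$, where $d\sigma$ now denotes surface measure on $\SP^{n-1}$, and $|x-y| = \rho\,|z-\omega|$ with $z:=(x-c)/\rho$. Substituting and collecting the powers of $\rho$ gives
\[
\int_{\SP_\rho}\frac{d\sigma_\rho(y)}{|x-y|^{(n-1)-2\lambda}} \;=\; \rho^{\,(n-1)-((n-1)-2\lambda)}\int_{\SP^{n-1}}\frac{d\sigma(\omega)}{|z-\omega|^{(n-1)-2\lambda}} \;=\; \rho^{2\lambda}\int_{\SP^{n-1}}\frac{d\sigma(\omega)}{|z-\omega|^{\,s}},
\]
with $s:=(n-1)-2\lambda$. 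Since $0<\lambda\le (n-1)/2$ we have $0\le s<n-1$, so it suffices to show that $\sup_{z\in\RR^n}\int_{\SP^{n-1}}|z-\omega|^{-s}\,d\sigma(\omega)=:C_\lambda<\infty$. The endpoint $s=0$ (that is, $\lambda=(n-1)/2$) is trivial, giving $|\SP^{n-1}|$, so we may assume $0<s<n-1$.

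To obtain the uniform bound when $0<s<n-1$ I would split according to the size of $|z|$. If $|z|\ge 2$ then $|z-\omega|\ge |z|-1\ge 1$ for all $\omega\in\SP^{n-1}$, so the integrand is $\le 1$ and the integral is at most $|\SP^{n-1}|$. If $|z|<2$, I would use the layer-cake identity $\int_{\SP^{n-1}}|z-\omega|^{-s}\,d\sigma(\omega)=s\int_0^\infty t^{-s-1}\,\sigma\bigl(\{\omega\in\SP^{n-1}:|z-\omega|<t\}\bigr)\,dt$, together with the elementary fact that the surface measure of the portion of $\SP^{n-1}$ lying in a Euclidean ball of radius $t$ is at most $Ct^{n-1}$, with $C$ depending only on $n$ and, crucially, independent of the center of the ball (and of course it never exceeds $|\SP^{n-1}|$ once $t$ exceeds the diameter). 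Plugging this in, the range $t\le \operatorname{diam}\SP^{n-1}$ contributes at most $sC\int_0^{\operatorname{diam}}t^{\,n-2-s}\,dt$, which is finite because $n-2-s>-1$ (here is where $\lambda>0$ enters), while the range $t\ge\operatorname{diam}\SP^{n-1}$ contributes at most $s|\SP^{n-1}|\int_{\operatorname{diam}}^\infty t^{-s-1}\,dt$, finite because $s>0$. Both bounds depend only on $n$ and $s$, hence only on $\lambda$, which yields $C_\lambda$ and completes the proof.

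The only mildly delicate point is that the estimate must hold with a single constant for \emph{every} $x\in\RR^n$, including the case in which $z$ lands exactly on $\SP^{n-1}$, where the integrand has a genuine singularity; this is precisely what the uniform-in-center Ahlfors bound $\sigma(\{|z-\omega|<t\})\le Ct^{n-1}$ takes care of, since the same $C$ works whether or not $z\in\SP^{n-1}$. Everything else is a routine change of variables and a one-dimensional integral; alternatively one could run the same computation in geodesic polar coordinates on $\SP^{n-1}$, reducing to $\int_0^\pi (\sin(\gamma/2))^{-s}\sin^{n-2}\gamma\,d\gamma<\infty$, but the layer-cake argument makes the uniformity in $z$ more transparent.
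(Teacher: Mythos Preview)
Your proof is correct and takes a genuinely different route from the paper's. Both arguments begin with the same scaling reduction to the unit sphere, but from there the paper proceeds geometrically: it fixes $\omega=x'/|x'|$, projects the sphere orthogonally onto the hyperplane through the origin perpendicular to $\omega$, and uses that this projection is $1$-Lipschitz with $P(x')=0$ to bound $|x'-\theta|^{-s}$ by $|P(\theta)|^{-s}$; the integral then reduces (away from a thin equatorial band where the Jacobian blows up, handled by a covering argument) to an $(n-1)$-dimensional Riesz integral over a bounded disk. Your approach replaces this explicit projection by the layer-cake formula together with the uniform Ahlfors $(n-1)$-regularity bound $\sigma(\SP^{n-1}\cap B(z,t))\le Ct^{n-1}$. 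This has the advantage of making the uniformity in $z$ completely transparent (no case analysis on the position of $z$ relative to the sphere beyond the trivial $|z|\ge 2$ cutoff, and no need to excise a band or count covering pieces), and it identifies cleanly where each endpoint restriction on $\lambda$ enters: $\lambda>0$ for the small-$t$ integrability and $\lambda<(n-1)/2$ for the large-$t$ tail, with $\lambda=(n-1)/2$ handled separately. The paper's argument, on the other hand, is slightly more hands-on and avoids invoking Ahlfors regularity as a black box.
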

Its proof is  a  straightforward computation and we leave it for the appendix. We also need the following natural property of the Sobolev spaces.
\begin{remark} \label{remark3}
We have that $W^{\beta,2}_\delta \subset W^{\beta',2}_{\delta'}$ if $\beta \ge \beta'$ and $\delta \ge \delta'$. This follows from the equivalence
\begin{equation*} 
  \norm{\jp{\cdot}^\delta \jp{D}^{\alpha} f}_{L^2(\RR^n)} \sim \norm{ \jp{D}^{\alpha} \jp{\cdot}^\delta f}_{L^2(\RR^n)},
  \end{equation*}
  see, for example, \cite[p. 222]{BM09quad}.
\end{remark}
\begin{proof}[Proof of Lemma $\ref{lemma.K.Q2}$]
Consider a parameter $\varepsilon <\lambda <1$, for $\varepsilon$ in the statement, and observe that $k$ satisfies $|\eta| \le 2k$. Since $C_0>1$  in (\ref{eq.cutoff1}), we have that ${\chi}(\eta)=0$  for $|\eta| \le 1$. This means that  $|\eta|^{-1} \le  2 \jp{\eta}^{-1}$ in the region where $\chi$ does not vanish. Since $\lambda<1$, putting these inequalities together  we get  $k^{\lambda -1} \le C \jp{\eta}^{\lambda-1}$, and this yields
\begin{align*}
&\norm{k^\varepsilon \widetilde{K}_r({\widehat{f_1}}, \widehat{f_2})}_{L^2_\alpha}^2 \\
 &\le C \int_{\RR^n} \jp{\eta}^{2\alpha-2 +2\lambda} \left( k^{\varepsilon-\lambda}\int_{\Gamma_{r}(-2k\theta)} |\widehat{f_1}(\xi)| |\widehat{f_2}(\eta-\xi)| \,\ds{r}(\xi) \right)^2  d\eta.
\end{align*}
Now, we ask $\lambda$ to also satisfy the relation
\begin{equation} \label{eq.par.Q2}
\beta =  \alpha -1+\lambda.
\end{equation}
We have $\eta = (\eta-\xi) + \xi$, so if we choose any $0<c<1/2$, for every $\xi\in \Gamma_r(\eta)$ at least one of the conditions $|\xi|>c|\eta|$ and $|\eta-\xi|>c|\eta|$ must hold. Since we have assumed that $\beta \ge 0$, in both cases we are led, respectively, to the estimate
\begin{equation*}
\norm{k^\varepsilon \widetilde{K}_r({\widehat{f_1}}, \widehat{f_2})}_{L^2_\alpha}^2 \le C(I_1 + I_2), \hspace{5mm}\text{where}
\end{equation*}
\vspace{-5mm}
\begin{align*} 
I_1 &:=\int_{\RR^n}  \left( k^{\varepsilon-\lambda}\int_{\Gamma_r(-2k\theta)} |\widehat{f_1}(\xi)|\jp{\xi}^{\beta} |\widehat{f_2}(\eta-\xi)| \,\ds{r}(\xi) \right)^2 d\eta, \\
I_2 &:=\int_{\RR^n}  \left( k^{\varepsilon-\lambda}\int_{\Gamma_r(-2k\theta)} |\widehat{f_1}(\xi)| |\widehat{f_2}(\eta-\xi)|\jp{\eta-\xi}^{\beta} \,\ds{r}(\xi) \right)^2 d\eta,
\end{align*}
We study the case of $I_1$. Multiplying and dividing by $|\eta-\xi|^{(n-1)/2-\lambda +\varepsilon}$, and applying the Cauchy-Schwarz inequality, since $0\le \varepsilon<\lambda$ we have
\begin{align}
\nonumber I_1 \le C  \int_{\RR^n}  &\int_{\Gamma_r(-2k\theta)} |\widehat{f_1}(\xi)|^2\jp{\xi}^{2\beta}|\widehat{f_2}(\eta-\xi)|^2|\eta-\xi|^{n-1-2(\lambda-\varepsilon)}\,\ds{r}(\xi)   \times \\
\nonumber &\hspace{40 mm} \dots \times \int_{\Gamma_r(-2k\theta)}\frac{k^{-2(\lambda-\varepsilon)}}{|\eta-\xi|^{n-1-2(\lambda-\varepsilon)}}\,\ds{r}(\xi)\, d\eta \\
\label{eq.Q2sph.1}  \le C r^{2(\lambda-\varepsilon)}&\int_{\RR^n} \int_{\Gamma_{r}(-2k\theta)} |\widehat{f_1}(\xi)|^2{\jp{\xi}^{2\beta}}|\widehat{f_2}(\eta-\xi)|^2 |\eta-\xi|^{n-1-2(\lambda-\varepsilon)} \, \ds{r}(\xi)\, d\eta,
\end{align}
 where we need to impose the condition $\lambda \le (n-1)/2 +\varepsilon$, to apply Lemma \ref{lemma.integrals} and get the last inequality (recall $\Gamma_r(-2k\theta)$ has radius $rk$). To simplify the integral over the Ewald sphere we are going to use  the trace theorem. The fundamental point is that for spheres, the constant of the trace theorem can be taken to be $1$,  independently of the radius of the sphere. See Proposition \ref{prop:traceT} in the appendix for an elementary proof of this fact. This yields
\begin{align}
\nonumber \int_{\Gamma_{r}(-2k\theta)} &|\widehat{f_1}(\xi)|^2\jp{\xi}^{2\beta}|\widehat{f_2}(\eta-\xi)|^2 \jp{\eta-\xi}^{n-1-2(\lambda-\varepsilon)}\,\ds{r}(\xi)  \\
\nonumber  &\le  \int_{\RR^n}    |\widehat{f_1}(\xi)|^2\jp{\xi}^{2\beta}|\widehat{f_2}(\eta-\xi)|^2 \jp{\eta-\xi}^{(n-1)-2(\lambda-\varepsilon)}   \, d\xi \\
\label{eq.Q2sph.3} &+  \int_{\RR^n} \left| \nabla_\xi  \left(  \widehat{f_1}(\xi)\jp{\xi}^{\beta}\widehat{f_2}(\eta-\xi) \jp{\eta-\xi}^{(n-1)/2-(\lambda-\varepsilon)} \right) \right|^2 \, d\xi .
\end{align}
Therefore, inserting (\ref{eq.Q2sph.3}) into (\ref{eq.Q2sph.1}) and changing the order of integration we get
\begin{align}
\nonumber I_1 \le  \, Cr^{2(\lambda-\varepsilon)} &\int_{\RR^n} \int_{\RR^n} | \widehat{f_1}(\xi)|^2\jp{\xi}^{2\beta}|\widehat{f_2}(\eta-\xi)|^2 \jp{\eta-\xi}^{(n-1)-2(\lambda-\varepsilon)}  d\xi\,d\eta \\
\nonumber + Cr^{2(\lambda-\varepsilon)}\int_{\RR^n} &\int_{\RR^n} \left |\nabla \left( \widehat{f_1}(\xi)\jp{\xi}^{\beta}\right) \right |^2  |\widehat{f_2}(\eta-\xi)|^2 \jp{\eta-\xi}^{(n-1)-2(\lambda-\varepsilon)} d\xi \,d\eta\\
\nonumber + \, Cr^{2(\lambda-\varepsilon)} \int_{\RR^n} &\int_{\RR^n} | \widehat{f_1}(\xi)|^2\jp{\xi}^{2\beta}\left |\nabla \left(\widehat{f_2}(\eta-\xi) \jp{\eta-\xi}^{(n-1)/2-(\lambda-\varepsilon)}\right)\right|^2 d\xi\,d\eta\\
\label{eq.Q2sph.2}  &\le  Cr^{2(\lambda-\varepsilon)} \norm{f_1}_{W^{\beta,2}_1}^2\norm{f_2}_{W^{(n-1)/2-(\lambda-\varepsilon),2}_1}^2,
\end{align} 
since by  Plancherel theorem we have
\begin{equation*}
\int_{\RR^n} \left|\nabla (\widehat{f}(\xi)\jp{\xi}^{t} )\right|^2   \, d\xi \le C \norm{f}_{W^{t,2}_1}^2 .
\end{equation*}
The estimate of $I_2$ is nearly identical, the only difference is that we multiply and divide by the weight $|\xi|^{(n-1)/2 -(\lambda-\varepsilon)}$ in (\ref{eq.Q2sph.1}), so essentially we recover estimate (\ref{eq.Q2sph.2})  with the  roles of $f_1$ and $f_2$ interchanged.
 Hence we have that
\begin{align*}
&\norm{k^\varepsilon \widetilde{K}_r({\widehat{f_1},\widehat{f_2}})}_{L^2_\alpha} \\
&\le C r^{(\lambda-\varepsilon)} \left( \norm{f_1}_{W^{\beta,2}_1}\norm{f_2}_{W^{(n-1)/2-(\lambda-\varepsilon),2}_1} + \norm{f_2}_{W^{\beta,2}_1}\norm{f_1}_{W^{(n-1)/2-(\lambda-\varepsilon),2}_1} \right).
\end{align*}
Now, as a consequence of (\ref{eq.par.Q2}) and that $\lambda>\varepsilon$, equation (\ref{eq.thm.sph.1}) will follow directly in the range $\beta \ge (n-1)/2 $ (we are also using  remark \ref{remark3}). But, by the conditions imposed in the proof we have to take into account the restrictions
\begin{equation}\label{restriccion_1}
\begin{cases}
\varepsilon < \lambda < 1 \\ \varepsilon < \lambda \le \frac{n-1}{2}+\varepsilon
\end{cases}   \Longleftrightarrow \begin{cases}
\beta < \alpha < \beta +1 -\varepsilon \\ \beta +1 - \frac{n-1}{2}-\varepsilon \le \alpha < \beta +1 -\varepsilon.
\end{cases}                
\end{equation}
We can discard the lower bounds for  $\alpha$ using that $\norm{f}_{L^2_\alpha} \le \norm{f}_{L^2_{\alpha'}}$  always holds if $\alpha\le \alpha'$. Therefore  we have only the restriction $ \alpha<\beta +1 -\varepsilon$.

Otherwise, if $\beta$ is in the range $0 \le \beta < (n-1)/2 $, estimate (\ref{eq.thm.sph.1}) will follow if we add the extra condition  
\begin{equation} \label{rest2}
(n-1)/2-(\lambda-\varepsilon) \le \beta.
\end{equation}
Then, since $\lambda<1$, we must have $\beta>(n-3)/2 +\varepsilon$ (the other conditions on $\lambda$ don't add new restrictions). Also (\ref{eq.par.Q2}) and (\ref{rest2}) imply together  that $\alpha \le 2\beta-(n-3)/2-\varepsilon$, which is a stronger condition than $\alpha<\beta+1-\varepsilon$ since we have $\beta<(n-1)/2$. Hence, we have obtained the ranges of parameters given in the statement.
\end{proof}
In  the next section we are going to need the following Lipschitz estimate for $\widetilde S_{\theta,r}$ which follows from the previous lemma,  to bound the principal  value operator $\widetilde P_\theta$.
\begin{proposition} \label{prop.esdefnQ2}
Let $n\ge 2$ and $q\in \mathcal S(\RR^n)$. Then for any $0<\delta<1$ and $r_1,r_2\in (1-\delta,1+\delta)$,
\begin{equation} \label{eq.PV2}
\norm{ k^{-1} (\widetilde S_{\theta,r_1}(q)- \widetilde S_{\theta,r_2}(q))}_{ L^2_{\alpha}}\le C |r_1-r_2|   \norm{q}_{W^{\beta,2}_2}^2, 
\end{equation}
holds when $\alpha$ and $\beta$ satisfy $(\ref{eq.range.Q2})$ with $\varepsilon=0$.
\end{proposition}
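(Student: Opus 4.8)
The plan is to reduce this Lipschitz estimate to the bilinear bound of Lemma \ref{lemma.K.Q2}, exactly as the pointwise estimate for $\widetilde S_{\theta,r}$ itself was deduced from $\widetilde K_r$. First I would write the difference $S_{\theta,r_1}(q)(\eta) - S_{\theta,r_2}(q)(\eta)$ as an integral over the region swept out between the two Ewald spheres $\Gamma_{r_1}(-2k\theta)$ and $\Gamma_{r_2}(-2k\theta)$. Concretely, parametrizing the family $\Gamma_r(-2k\theta)$ by the radial variable $t = rk$ around the point $-k\theta$ (as in the proof of Proposition \ref{prop.fixangleformula}), the quantity $k^{-1}S_{\theta,r}(q)(\eta)$ — up to the smooth weight $1/(k(1+r))$, which on $r\in(1-\delta,1+\delta)$ is comparable to a constant times $k^{-1}$ and has bounded $r$-derivative — is essentially $k^{-2}$ times the surface integral of $\widehat q(\xi)\widehat q(\eta-\xi)$ over the sphere of radius $t$. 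Taking the difference and using the fundamental theorem of calculus in $t$ converts the surface difference into a solid integral over the annular shell $\{\, (1-\delta)k \le |\xi+k\theta| \le (1+\delta)k\,\}$ (or, after the $t=rk$ scaling, an integral in $r$ over $(r_2,r_1)$ of a surface integral over $\Gamma_r(-2k\theta)$ involving a normal derivative of $\widehat q(\xi)\widehat q(\eta-\xi)$). This produces a factor $|r_1-r_2|$ out front, at the cost of one extra $\xi$-derivative hitting the integrand — which is precisely why the hypothesis has been strengthened from $W^{\beta,2}_1$ to $W^{\beta,2}_2$: each derivative of $\widehat q$ corresponds to one extra power of $\jp{\cdot}$ in the weight.

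Next I would estimate the resulting expression by $|r_1-r_2|$ times a supremum over $r$ in the interval of a quantity of the form $\| k^{-1}\widetilde K_r(\widehat{g_1},\widehat{g_2})\|_{L^2_\alpha}$, where $g_1,g_2$ range over $q$ and first-order derivatives of $q$ in the frequency variable, i.e.\ functions whose Fourier transforms are $\widehat q$ or $\partial_\xi(\widehat q)$, hence lie in $W^{\beta,2}_1$ when $q\in W^{\beta,2}_2$ (using Remark \ref{remark3} and Plancherel, as in the displayed inequality $\int|\nabla(\widehat f \jp{\cdot}^t)|^2 \le C\|f\|^2_{W^{t,2}_1}$). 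Applying Lemma \ref{lemma.K.Q2} with $\varepsilon = 0$ — note $k^{-1}\le C\jp{\eta}^{-1}$ on the support of $\chi$, so the $k^{-1}$ prefactor is harmless and in fact better than the $k^0$ needed for $\varepsilon=0$ — gives a bound of the form $C(1+r)^\lambda \|g_1\|_{W^{\beta,2}_1}\|g_2\|_{W^{\beta,2}_1}$, and on $r\in(1-\delta,1+\delta)$ the factor $(1+r)^\lambda$ is bounded by an absolute constant. Collecting terms, each factor $\|g_i\|_{W^{\beta,2}_1}$ is controlled by $\|q\|_{W^{\beta,2}_2}$, yielding the claimed estimate $C|r_1-r_2|\,\|q\|_{W^{\beta,2}_2}^2$ in the range \eqref{eq.range.Q2} with $\varepsilon=0$.

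The main obstacle I anticipate is bookkeeping rather than a genuine difficulty: one must carefully handle the normal derivative of the product $\widehat q(\xi)\widehat q(\eta-\xi)$ over the moving sphere, splitting it by the Leibniz rule into a term where the derivative falls on $\widehat q(\xi)$ and one where it falls on $\widehat q(\eta-\xi)$, and then in each term repeating the $c|\eta|$-dichotomy ($|\xi|>c|\eta|$ or $|\eta-\xi|>c|\eta|$) and the Cauchy–Schwarz/trace-theorem argument of Lemma \ref{lemma.K.Q2} with one of the two functions replaced by a derivative. Because the restriction $r\in(1-\delta,1+\delta)$ keeps the radius $rk$ comparable to $k$ and keeps the smooth coefficients $1/(k(1+r))$ and their $r$-derivatives under control, no new constraint on $\alpha,\beta$ beyond \eqref{eq.range.Q2} is introduced; the only price paid is the single extra weight, absorbed by passing from $W^{\beta,2}_1$ to $W^{\beta,2}_2$. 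The density of $\mathcal S(\RR^n)$ is used only so that all the differentiations and Fubini interchanges are justified, the final inequality then being the desired Lipschitz bound.
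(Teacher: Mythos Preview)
Your plan is correct and is essentially the paper's own argument: the paper parametrizes $\Gamma_r(-2k\theta)$ via $\xi=rk\omega-k\theta$, differentiates $S_{\theta,r}(q)(\eta)$ in $r$ explicitly (obtaining one term from the prefactor and two terms where a derivative hits $\widehat q$, the latter carrying an extra factor of $k$), applies the fundamental theorem of calculus, and then bounds each piece by Lemma~\ref{lemma.K.Q2} with $\varepsilon=0$, using $\|x_iq\|_{W^{\beta,2}_1}\le\|q\|_{W^{\beta,2}_2}$. The only point to watch in your bookkeeping is that the derivative terms come with a factor $k$ from the chain rule, so after multiplying by the $k^{-1}$ in the statement they are controlled by $\|\widetilde K_r(\widehat{g_1},\widehat{g_2})\|_{L^2_\alpha}$ (no extra $k^{-1}$), which is exactly what $\varepsilon=0$ in Lemma~\ref{lemma.K.Q2} gives.
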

In general the constant $C$ in the estimate is going to depend on $\delta$, but this has no special relevance. Observe also that the Sobolev space this time is $W^{\beta,2}_2$ instead of $W^{\beta,2}_1$.
\begin{proof}
We center the Ewald spheres in the origin with the change $\xi =  r k \omega -k\theta  $, where $\omega \in \SP^{n-1}$,
\begin{align*}
S_{\theta,r}(q)(\eta) &:= \frac{1}{k(1+r)} \int_{\Gamma_{r}(-2k\theta)}\widehat{q}(\xi) \widehat{q}(\eta-\xi) \, \ds{r}(\xi) \\
&=  \frac{2 k^{n-2} r^{n-1}}{(1+r)} \int_{\SP^{n-1}}\widehat{q}\left(r k \omega -k\theta \right) \widehat{q}\left(\eta-r k \omega +k\theta\right) \, d\sigma(\omega).
\end{align*} 
Now we can compute derivatives in the $r$ variable. Consider $\eta$ fixed, then
\begin{align*}
\nonumber &\frac{d\,}{dr}S_{\theta,r}(q)(\eta) \\
 \nonumber &=  \frac{ k^{n-2}((n-1)r^{n-2}(1+r)-r^{n-1})}{(1+r)^2} \int_{\SP^{n-1}}\widehat{q}\left( r k \omega -k\theta \right) \widehat{q}\left(\eta-r k \omega +k\theta \right) \, d\sigma(\omega) \\
\nonumber &+  \frac{ k^{n-1} r^{n-1}}{(1+r)} \int_{\SP^{n-1}}\omega \cdot\nabla \widehat{q}\left(r k \omega -k\theta \right) \widehat{q}\left(\eta-r k \omega +k\theta\right) \, d\sigma(\omega) \\
 &-   \frac{ k^{n-1} r^{n-1}}{(1+r)}  \int_{\SP^{n-1}} \widehat{q}\left(r k \omega -k\theta \right)\omega \cdot \nabla \widehat{q}\left(\eta-r k \omega +k\theta\right) \, d\sigma(\omega), 
\end{align*}
 (notice that $S_{\theta,r}(q)(\eta)$  is a smooth function in the $r$ variable for every $\eta\neq 0$). Hence, fixing some $0<\delta<1$,  for $r\in (1-\delta,1+\delta)$, if we undo the change to spherical coordinates we get
\begin{equation} \label{eq.lema.esdefnQ2.3}
  \left | \frac{d\,}{dr}  S_{\theta,r}(q)(\eta) \right | \le C  K_{r}(\widehat{q},\widehat{q})(\eta) + C k  K_{r}(|\nabla\widehat{q}|,\widehat{q})(\eta) + Ck  K_{r}(\widehat{q},|\nabla \widehat{q}|)(\eta),
 \end{equation}
taking the absolute values inside the integrals. Notice the $k$ factor multiplying the last  terms. Now, if $\eta\neq 0$, by the fundamental theorem of calculus we have
\begin{align*}
\nonumber S_{\theta,r_2}(q)(\eta) - S_{\theta,r_1}(q)(\eta) &=  \int^{r_2}_{r_1}  \frac{d\,}{dr}  S_{\theta,r}(q)(\eta) \, dr \\
 &= (r_2-r_1) \int_0^1  \left[\frac{d\,}{dr}  S_{\theta,r}(q)(\eta)\right]_{r= r(t)}  dt,
\end{align*}
where for brevity, $r(t) = (r_2-r_1)t + r_1$. Then by (\ref{eq.lema.esdefnQ2.3}) we obtain
\begin{align*} 
&|\nonumber {S}_{\theta,r_2}(q)(\eta) - {S}_{\theta,r_1}(q)(\eta)|\\ 
&\hspace{5mm}\le C   |r_2-r_1| \int_0^1  \left ( K_{r(t)}({\widehat{q}},\widehat{q})(\eta) + k K_{r(t)}(|\nabla\widehat{q}|,\widehat{q})(\eta)+k K_{r(t)}(\widehat{q},|\nabla\widehat{q}|)(\eta) \right)  \,dt,
\end{align*}
so multiplying by ${\chi}(\eta)$,  and applying Minkowski's integral inequality we have
\begin{align*}
  &\norm{ k^{-1}(\widetilde S_{\theta,r_2}(q) - \widetilde S_{\theta,r_2}(q))}_{L^2_{\alpha}} \\
   &\le C|r_2-r_1|  \int_0^1 \hspace{-1mm}\left( \norm{ k^{-1}\widetilde{K}_{r(t)}(\widehat{q},\widehat{q}) }_{L_{\alpha}^2}+  \norm{ \widetilde K_{r(t)}(|\nabla\widehat{q}|,\widehat{q})}_{L_{\alpha}^2} + \norm{ \widetilde K_{r(t)}(\widehat{q}, |\nabla \widehat{q}|)}_{L_{\alpha}^2} \right) dt.
\end{align*}
 Then, since $r(t) \in (1-\delta,1+\delta)$, we can apply Lemma \ref{lemma.K.Q2} with $\varepsilon=0$ to estimate the first term (using that $k^{-1} \le 2|\eta|^{-1}$). The others follow similarly. Observe that
\begin{equation*}
 K_{r}(|\nabla\widehat{q}|,\widehat{q})\le C \sum_{i=1}^n K_{r}(\partial_i \widehat{q},\widehat{q})= C \sum_{i=1}^n K_{r}(\widehat{x_iq},\widehat{q}),
 \end{equation*}
so again we can apply Lemma \ref{lemma.K.Q2} to estimate these terms, to obtain
\begin{equation*} 
\norm{\widetilde K_{r}(\widehat{x_iq},\widehat q)}_{ L^2_\alpha}\le C  \norm{x_iq}_{W_1^{\beta,2}} \norm{q}_{W_1^{\beta,2}} \le C  \norm{q}_{W_2^{\beta,2}}^2. 
\end{equation*}
This completes the proof.
\end{proof}


\section{Estimate of the Principal Value Operator} \label{sec.4}

As a consequence of Lemma \ref{lemma.QspheL2} and Proposition \ref{prop.esdefnQ2}, we obtain the following estimate for the principal value operator $\widetilde P_\theta$ introduced in (\ref{eq.SyPtilde}).
\begin{proposition} \label{prop.PV}
Let $n\ge 2$ and $q\in W_2^{\beta,2}(\RR^n)$ with $\beta\ge 0$. Then the estimate
\begin{equation*} 
\norm{\widetilde P_\theta(q)}_{ L^2_{\alpha}}\le C  \norm{q}_{W_2^{\beta,2}}^2,
\end{equation*}
holds  when
\begin{equation} \label{eq.range.P}
\alpha < \begin{cases}
  \beta +(\beta- (n-3)/2), \hspace{3mm} if \hspace{3mm} (n-3)/2 <\beta < (n-1)/2,\\
 \beta +1, \hspace{25mm} if \hspace{3mm} (n-1)/2 \le \beta<\infty.
         \end{cases} 
\end{equation}
\end{proposition}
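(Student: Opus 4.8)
The plan is to split the principal value in $(\ref{eq.P})$ at a fixed distance from its only singularity $r=1$. Since $\chi(\eta)$ does not depend on $r$ we may write $\widetilde P_\theta(q)(\eta)=\pv\int_0^\infty (1-r)^{-1}\widetilde S_{\theta,r}(q)(\eta)\,dr$; fixing $\delta=1/2$, I would decompose this as the two non‑singular integrals over $(0,1-\delta)$ and $(1+\delta,\infty)$ plus the principal value over $(1-\delta,1+\delta)$. On $(0,1-\delta)\cup(1+\delta,\infty)$ one has $|1-r|^{-1}\le C_\delta(1+r)^{-1}$, so Minkowski's integral inequality together with Lemma $\ref{lemma.QspheL2}$ for $\varepsilon=0$ (whose range of $\alpha$ contains $(\ref{eq.range.P})$, using also $W^{\beta,2}_2\subset W^{\beta,2}_1$ from Remark $\ref{remark3}$) bound these two pieces by a constant times $\norm{q}_{W^{\beta,2}_2}^2\int_0^\infty(1+r)^{-1-\gamma}\,dr<\infty$; the convergence as $r\to\infty$ is precisely where the decay exponent $\gamma>0$ of Lemma $\ref{lemma.QspheL2}$ is used.

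For the piece near $r=1$ I would use the cancellation $\pv\int_{1-\delta}^{1+\delta}(1-r)^{-1}\,dr=0$ to subtract $\widetilde S_{\theta,1}(q)(\eta)$ from the numerator. Since $r\mapsto S_{\theta,r}(q)(\eta)$ is $C^1$ on $(0,\infty)$ for $\eta\neq0$ (as recorded in the proof of Proposition $\ref{prop.esdefnQ2}$), the principal value becomes the absolutely convergent integral $\int_{1-\delta}^{1+\delta}(1-r)^{-1}\big(\widetilde S_{\theta,r}(q)(\eta)-\widetilde S_{\theta,1}(q)(\eta)\big)\,dr$, whose $L^2_\alpha$ norm is, by Minkowski, at most $\int_{1-\delta}^{1+\delta}|1-r|^{-1}\,\norm{\widetilde S_{\theta,r}(q)-\widetilde S_{\theta,1}(q)}_{L^2_\alpha}\,dr$. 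It therefore suffices to obtain a fractional Lipschitz bound $\norm{\widetilde S_{\theta,r}(q)-\widetilde S_{\theta,1}(q)}_{L^2_\alpha}\le C|r-1|^{a}\norm{q}_{W^{\beta,2}_2}^2$ for some $a>0$, since then $\int_{1-\delta}^{1+\delta}|1-r|^{a-1}\,dr<\infty$.

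The heart of the matter is producing such a bound. Proposition $\ref{prop.esdefnQ2}$ gives a genuine ($a=1$) Lipschitz bound, but carrying an extra weight: $\norm{k^{-1}(\widetilde S_{\theta,r}(q)-\widetilde S_{\theta,1}(q))}_{L^2_\alpha}\le C|r-1|\norm{q}_{W^{\beta,2}_2}^2$. To shed the weight I would interpolate this against Lemma $\ref{lemma.QspheL2}$. Concretely, choose $\varepsilon\in(0,1)$ small enough that $\alpha$ still lies in the corresponding range $(\ref{eq.range.Q2})$ — possible exactly because $\alpha$ is strictly inside $(\ref{eq.range.P})$ — and set $\theta_0=(1+\varepsilon)^{-1}$, so that $k^0=(k^\varepsilon)^{\theta_0}(k^{-1})^{1-\theta_0}$ pointwise. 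Hölder's inequality in $\eta$ then gives, with $g_r:=\widetilde S_{\theta,r}(q)-\widetilde S_{\theta,1}(q)$,
\[
\norm{g_r}_{L^2_\alpha}\le\norm{k^\varepsilon g_r}_{L^2_\alpha}^{\theta_0}\,\norm{k^{-1}g_r}_{L^2_\alpha}^{1-\theta_0}\le \big(C\norm{q}_{W^{\beta,2}_2}^2\big)^{\theta_0}\big(C|r-1|\,\norm{q}_{W^{\beta,2}_2}^2\big)^{1-\theta_0}=C|r-1|^{\,\varepsilon/(1+\varepsilon)}\norm{q}_{W^{\beta,2}_2}^2,
\]
where the first factor is controlled by Lemma $\ref{lemma.QspheL2}$ (uniformly for $r\in(1-\delta,1+\delta)$, absorbing the harmless bounded factor $(1+r)^{-\gamma}$) and the second by Proposition $\ref{prop.esdefnQ2}$. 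Thus $a=\varepsilon/(1+\varepsilon)>0$ and the near-$r=1$ estimate closes in the slightly reduced range $(\ref{eq.range.Q2})$ for that $\varepsilon$; letting $\varepsilon\downarrow0$ this range exhausts $(\ref{eq.range.P})$, which yields $\norm{\widetilde P_\theta(q)}_{L^2_\alpha}\le C\norm{q}_{W^{\beta,2}_2}^2$ in the full range.

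The main obstacle is exactly this interpolation step: without the fractional gain $|r-1|^{\varepsilon/(1+\varepsilon)}$ the integral $\int_{1-\delta}^{1+\delta}|1-r|^{-1}\norm{g_r}_{L^2_\alpha}\,dr$ would converge only if Proposition $\ref{prop.esdefnQ2}$ held without the weight $k^{-1}$, which it does not; trading that weight for an arbitrarily small loss $\varepsilon$ in the Sobolev scale is what makes everything fit, and the loss is invisible in $(\ref{eq.range.P})$. A routine preliminary to dispatch is that Proposition $\ref{prop.esdefnQ2}$ is stated for $q\in\mathcal S(\RR^n)$: since $\widetilde S_{\theta,r}$, $\widetilde P_\theta$ and all the auxiliary operators are bilinear in $q$ and the target estimate is quadratic, one first proves it for $q\in\mathcal S$ and then extends it to $q\in W^{\beta,2}_2(\RR^n)$ by density and polarization.
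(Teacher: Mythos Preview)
Your argument is correct and uses the same two building blocks as the paper (Lemma~\ref{lemma.QspheL2} with a small $\varepsilon>0$ and Proposition~\ref{prop.esdefnQ2}), but combines them differently. The paper does not interpolate: it introduces an $\eta$-dependent radius $\delta_k=\delta\min(k^{-2},1)$ and splits the near-singular piece as $P_{\theta,\delta_k}+P_{\theta,B_k}$. On the innermost region $|1-r|<\delta_k$ the weight $k^{-1}$ in Proposition~\ref{prop.esdefnQ2} is harmless because the width $\delta_k\lesssim k^{-2}$ supplies the missing factors after Cauchy--Schwarz in $r$; on the intermediate annulus $\delta_k<|1-r|<\delta$ a dyadic decomposition in $|1-r|$ is used, and on each dyadic shell the constraint $k^2\gtrsim 2^{j}$ converts the $k^\varepsilon$ of Lemma~\ref{lemma.QspheL2} into the summable gain $2^{-j\varepsilon/2}$. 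Your H\"older interpolation $\|g_r\|_{L^2_\alpha}\le\|k^\varepsilon g_r\|_{L^2_\alpha}^{1/(1+\varepsilon)}\|k^{-1}g_r\|_{L^2_\alpha}^{\varepsilon/(1+\varepsilon)}$ achieves the same trade-off in one stroke, producing the fractional Lipschitz exponent $\varepsilon/(1+\varepsilon)$ directly and avoiding the $\eta$-dependent splitting and the dyadic sum altogether; it is shorter and more transparent. The paper's route, on the other hand, keeps the two estimates completely decoupled (no pointwise product in $\eta$), which can be convenient if one later wants to mix different target spaces for the two ingredients. One small remark: your phrase ``letting $\varepsilon\downarrow0$'' is slightly misleading---what you actually do (and what suffices) is, for each fixed $\alpha$ satisfying \eqref{eq.range.P}, pick one $\varepsilon>0$ small enough that $\alpha$ also satisfies \eqref{eq.range.Q2}; no limiting process is needed.
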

\begin{proof}
By a density argument we might assume $q\in \mathcal S(\RR^n)$.   Take the same $\delta$ as in Proposition \ref{prop.esdefnQ2}, 
\begin{align*}
\nonumber \widetilde{P}_\theta(q)(\eta)&=P.V.\int_{|1-r|<\delta} \frac{\widetilde{S}_{\theta,r}(q)(\eta)}{1-r}dr +\int_{\delta<|1-r|} \frac{\widetilde{S}_{\theta,r}(q)(\eta)}{1-r}\,dr \\
    &:= P_{\theta,\delta}(q)(\eta)+P_{\theta,L}(q)(\eta),
\end{align*}
 (we drop the tilde symbol from the operators just defined to simplify notation). Applying Minkowski's integral inequality and estimate (\ref{eq.PV1}) with $\varepsilon=0$, we obtain
\begin{equation}\label{eq.PV.1}
\|P_{\theta,L} (q)\|_{L^2_\alpha} \le  \int_{  \delta<|1-r| } \frac{\|\widetilde{S}_{\theta,r}(q)\|_{L^2_\alpha} }{|1-r|} \, dr \leq C \norm{q}_{W_2^{\beta,2}}^2,
\end{equation} 
To study $P_{\theta,\delta}$ we need a finer decomposition in regions. Set $\delta_k :=  \delta\min \left(k^{-2},1 \right)$. To simplify notation we define the region
\[{B_k := \{r\in (0,\infty): \delta_k \le |1-r| \le \delta \},}\]
relevant when $k>1$. By using that $P.V.\int_{|1-r|<a}\frac{dr}{1-r}=0$ for any $a>0$, we have
\begin{align}
\nonumber P_{\theta,\delta}(q)(\eta) &= \int_{|1-r|<\delta_k} \frac{\widetilde{S}_{\theta,r}(q)(\eta)-\widetilde{S}_{\theta,1}(q)(\eta)}{1-r}dr  + \int_{B_k} \frac{\widetilde{S}_{\theta,r}(q)(\eta)}{1-r}\,dr \\
  \label{eq.noPV2}  &:= P_{\theta,\delta_k}(q)(\eta)+P_{\theta,B_k}(q)(\eta),
\end{align}
where in (\ref{eq.noPV2}) the $P.V.$ is no longer necessary since $q\in \mathcal S$ implies that $\widetilde S_{\theta,r}(q)(\eta)$ is smooth  in the $r$ variable, so the singularity in the denominator is cancelled by the numerator. Then, using Cauchy-Schwarz's inequality in the $r$ variable and estimate (\ref{eq.PV2}) we obtain
\begin{align}
\nonumber &\norm{P_{\theta,\delta_k}(q)}_{ L^2_{\alpha}}^2 = \int_{\RR^n} \jp{\eta}^{2\alpha}
\left |\int_{|1-r| <  \delta_k} \frac{\widetilde{S}_{\theta,1}(q)(\eta)-\widetilde{S}_{\theta,r}(q)(\eta)}{1-r}  \,dr\right|^2 d\eta \\
 \nonumber &\le 2 \delta \int_{\RR^n} \jp{\eta}^{2\alpha} \int_{|1-r| <  \delta_k}
\left( k^{-1}\frac{|\widetilde{S}_{\theta,1}(q)(\eta)-\widetilde{S}_{\theta,r}(q)(\eta)| }{|1-r|}  \right)^2 dr  \,d\eta \\
 \label{eq.PV.2} & \le 2 \delta \int_{|1-r| <  \delta} \frac{\norm{k^{-1}(\widetilde{S}_{\theta,1}(q)(\eta)-\widetilde{S}_{\theta,r}(q)(\eta))}_{L_\alpha^{2} }^2}{|1-r|^2} \,dr     \le C \delta^2\norm{q}_{W_2^{\beta,2}}^4.
\end{align}
Now, to estimate $P_{\theta,B_k}(q)(\eta)$, set $N(k) = - \log_2(\delta k^{-2})$, and consider the next dyadic decomposition,
\begin{equation*}
P_{\theta,B_k}(q)(\eta) = \sum_{0 \le j<N(k)}  \int_{\{2^{-(j+1)}<|1-r|<2^{-j}\}} \chi_{B_k}(r)  \frac{1}{1-r} {\widetilde{S}_{\theta,r}(q)}(\eta) \,dr ,
\end{equation*}
 where $\chi_{B_k}$ is the characteristic function of ${B_k}$. For $\eta$ fixed, if $0\le j <N(k)$, the definition of $N(k)$ implies that ${2^j \le  k^{2}}/\delta$. Therefore
 \begin{equation}  \label{eq.diadic1}
|P_{\theta,{B_k}}(q)(\eta)|\le \sum_{j=0}^\infty 2^{j+1} \chi_{(\delta 2^{j},\infty)}(k^{2})\int_{|1-r|<2^{-j}} |{\widetilde{S}_{\theta,r}(q)}(\eta)| \,dr ,
\end{equation}
where  $\chi_{(\delta 2^{j},\infty)}$ is again a characteristic function. But observe that in the last line we have a sublinear operator of the kind
\[{P^{\lambda}(q)}(\eta) := \chi_{({\delta \lambda^{-1}},\infty)} (k^2) \int_{|1-r|\le \lambda} |{\widetilde{S}_{\theta,r}(q)}(\eta)| \, dr,\]
 with $0<\lambda<1$. Take $\varepsilon>0$ small. Computing the $L^2_{\alpha}$ norm of $P^{\lambda}$ and applying Minkowski's integral inequality we obtain
\begin{equation}  \label{eq.diadic2}
\norm{P^{\lambda}(q)}_{ L^2_{\alpha}}  \le C{\lambda}^{\varepsilon/2}  \int_{|1-r|\le {\lambda} } \norm{k^\varepsilon \widetilde{S}_{\theta,r}(q)}_{ L^2_\alpha} \,dr \le {\lambda}^{1+\varepsilon/2}  C  \norm{q}_{W_2^{\beta,2}}^2,
\end{equation}
using estimate (\ref{eq.PV1}), and that in the region where the characteristic function does not vanish we have that $k^{-\varepsilon} \le C \lambda^{\varepsilon/2}$. Hence, taking the $L^2_{\alpha}$ norm of (\ref{eq.diadic1}) and applying estimate (\ref{eq.diadic2}),
\begin{equation} \label{eq.PV.3}
\norm{P_{\theta,{B_k}}(q)}_{ L^2_{\alpha}} \le 2 \sum^{\infty}_{j=0}  2^{j} \norm{ {P^{2^{-j}}(q)} }_{ L^2_{\alpha}} 
\le    C \norm{q}_{W_2^{\beta,2}}^2 \sum^{\infty}_{j=0}  2^{-j \varepsilon/2},
\end{equation}
and the dyadic sum converges. This estimate holds when $\alpha$ is in the range given by (\ref{eq.range.Q2}), so for every $\alpha$ in the range given by (\ref{eq.range.P}) is possible to chose $\varepsilon$  so that (\ref{eq.PV.3}) holds.  Therefore since $\widetilde{P}_\theta = P_{\theta,L}+P_{\theta,\delta_k}+P_{\theta,{B_k}}$ we conclude the proof putting together estimates (\ref{eq.PV.1}), (\ref{eq.PV.2}) and (\ref{eq.PV.3}). 
\end{proof}

\section{Some limitations on the regularity of the double dispersion operator}  \label{sec.5}
In this section we construct a family of real, radial and compactly supported functions $g_\beta$ to obtain upper bounds  for the regularity gain of the $Q_{\theta,2}$ and $Q_{F,2}$ operators.  We will also give the proofs of Theorems \ref{teo.1} and \ref{teo.Q2}.
  \begin{theorem} \label{teo.count}
For every $0<\beta<\infty$, if $\alpha_0:= \min(\beta + 1, 2\beta  -(n-4)/2)$, there is a radial, real and compactly supported function $g_\beta$ satisfying  $g_\beta \in W^{\gamma,2}(\RR^n)$ if $\gamma<\beta$, and such that
\begin{enumerate}[i)]
\item $ Q_{\theta,2}(g_\beta) \in W_{loc}^{\alpha,2}(\RR^n)$ only if $\alpha< \alpha_0$.
\item $ Q_{F,2}(g_\beta) \in W_{loc}^{\alpha,2}(\RR^n)$ only if $\alpha< \alpha_0$.
\end{enumerate}
\end{theorem}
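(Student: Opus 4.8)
The plan is to construct $g_\beta$ explicitly as a radial function whose Fourier transform has a prescribed decay and oscillation, so that we can compute (or estimate from below) the Fourier transform of $Q_{\theta,2}(g_\beta)$ near the critical frequencies. The natural candidate is something like $\widehat{g_\beta}(\xi) \sim \langle\xi\rangle^{-s} e^{i|\xi|}$ (times a smooth cutoff making $g_\beta$ compactly supported), with $s$ chosen so that $g_\beta \in W^{\gamma,2}$ exactly for $\gamma < \beta$, i.e. $s = \beta + n/2$. The oscillatory factor $e^{i|\xi|}$ is the key device: it encodes a singularity of $g_\beta$ on a sphere (the boundary of its "essential support"), and controls how the spherical averages defining $Q_{\theta,2}$ behave. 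A cleaner route, which I would pursue, is to start from a model singularity in physical space — e.g. $g_\beta$ behaving like $\mathrm{dist}(x,\partial B)_+^{a}$ near a sphere $\partial B$, with $a$ tuned to the Sobolev exponent $\beta$ — since the double dispersion operator interacts transparently with conormal singularities.

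**The main steps, in order.** First, use Proposition \ref{prop.fixangleformula} to write $\widehat{Q_{\theta,2}(g_\beta)}(\eta) = \chi_{H_\theta}(\eta)[i\pi S_{\theta,1}(g_\beta)(\eta) + P_\theta(g_\beta)(\eta)] + (\theta \to -\theta)$. Second, identify the dominant term: for the counterexample it should be the spherical term $S_{\theta,1}$ (the principal value term $P_\theta$ being of lower order, or at least not better behaved), since $S_{\theta,1}$ restricts $\widehat{g_\beta}\otimes\widehat{g_\beta}$ to the Ewald sphere $\Gamma_1(-2k\theta)$, and the worst contribution comes from the region of that sphere where \emph{both} factors $\widehat{g_\beta}(\xi)$ and $\widehat{g_\beta}(\eta-\xi)$ are large, i.e. where the singular spheres in frequency overlap. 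Third, carry out a stationary-phase / geometric analysis of this restricted integral: the oscillation $e^{i(|\xi| \pm |\eta - \xi|)}$ on $\Gamma_1(-2k\theta)$ has critical points along a submanifold whose dimension jumps (a "fold" or tangency) precisely at certain $\eta$, and the size of $\widehat{Q_{\theta,2}(g_\beta)}(\eta)$ for large $|\eta|$ is governed by the worse of two regimes — this is exactly what produces the two-branch formula $\min(\beta+1, 2\beta - (n-4)/2)$. Fourth, from the pointwise lower bound $|\widehat{Q_{\theta,2}(g_\beta)}(\eta)| \gtrsim \langle\eta\rangle^{-\alpha_0}$ on a set of positive density, conclude $\langle D\rangle^\alpha Q_{\theta,2}(g_\beta) \notin L^2_{loc}$ for $\alpha \ge \alpha_0$. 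Fifth, for part (ii): by \eqref{eq.BjF}, $\widehat{Q_{F,2}(g_\beta)}$ is the average over $\theta$ of $B_{\theta,2}(g_\beta)$; since $g_\beta$ is radial the integrand's size is essentially $\theta$-independent and there is no cancellation in the average, so the same lower bound survives. (This is where radiality is essential, as flagged in the statement.)

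**The hard part** is the third step: extracting a \emph{lower} bound — not just an upper bound — on the oscillatory spherical integral defining $S_{\theta,1}(g_\beta)(\eta)$, uniformly enough in $\eta$ to reach $L^2$. Upper bounds tolerate crude estimates; lower bounds require knowing that the stationary-phase contributions do not cancel, which forces one to track the precise phase $|\xi| \pm |\eta-\xi|$ on the Ewald sphere, locate its critical manifold, and verify the Hessian is non-degenerate (generically) so that the leading asymptotic term is genuinely nonzero. The delicate geometric point is the tangency between the two frequency-singular spheres and the Ewald sphere $\Gamma_1(-2k(\eta,\theta)\theta)$, which as $|\eta|\to\infty$ along suitable directions produces the slower decay rate $\langle\eta\rangle^{-(2\beta-(n-4)/2)}$ that beats the "generic" rate $\langle\eta\rangle^{-(\beta+1)}$ when $\beta < (n-2)/2$. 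Once this model computation is set up correctly, showing $Q_\theta$ and $Q_F$ cannot gain more than one derivative (Theorem \ref{teo.1}) follows by combining (i)–(ii) here with the convergence of the higher-order tail from Proposition \ref{prop.Qj} in the regime $\beta \ge m$, exactly as in the proof of Theorem \ref{teo.recovery} but run in reverse.
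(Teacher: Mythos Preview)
Your plan is over-engineered in a way that creates exactly the difficulty you flag as ``the hard part.'' The paper's construction sidesteps stationary phase entirely by choosing $g_\beta$ so that $\widehat{g_\beta}$ is \emph{real and nonnegative}: take $g_\beta = \phi\, G_\beta$ where $\widehat{G_\beta}(\xi)=\jp{\xi}^{-n/2-\beta}$ is a Bessel kernel and $\phi\in C_c^\infty$ is built as $\psi*\psi$ for a real radial $\psi$, so that $\widehat\phi=\widehat\psi^{\,2}\ge 0$. Then $\widehat{g_\beta}=\widehat\phi*\widehat{G_\beta}\ge 0$, with $\widehat{g_\beta}(0)>0$ and $\widehat{g_\beta}(\xi)\gtrsim\jp{\xi}^{-n/2-\beta}$ for $|\xi|$ large. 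With this choice the spherical integral $S_{\theta,1}(g_\beta)(\eta)$ has a nonnegative integrand, so lower bounds come for free by restricting to convenient subsets of the Ewald sphere: integrating over the full sphere (minus small caps near $0$ and $\eta$) gives $S_\theta(g_\beta)(\eta)\gtrsim\jp{\eta}^{-2\beta-2}$, while integrating only over a fixed small ball around the origin (where $\widehat{g_\beta}\gtrsim 1$) gives $S_\theta(g_\beta)(\eta)\gtrsim\jp{\eta}^{-\beta-n/2-1}$. These are precisely the two branches of $\alpha_0$, obtained without any oscillatory analysis. The separation of $S_\theta$ from $P_\theta$ is also immediate: since $\widehat{g_\beta}$ is real, $P_\theta(g_\beta)$ is real and $i\pi S_\theta(g_\beta)$ is purely imaginary, so $|B_{\theta,2}(g_\beta)(\eta)|\ge\pi S_\theta(g_\beta)(\eta)$ with no possible cancellation. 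Part (ii) then follows because the $\theta$-average in \eqref{eq.BjF} is an average of nonnegative quantities.

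By contrast, your oscillatory ansatz $\widehat{g_\beta}(\xi)\sim\jp{\xi}^{-s}e^{i|\xi|}$ fails the reality requirement on $g_\beta$ (the Fourier transform of a real function must satisfy $\widehat f(-\xi)=\overline{\widehat f(\xi)}$, which $e^{i|\xi|}$ violates), and even after symmetrizing you would still face the two problems you yourself anticipate: proving the stationary-phase leading terms do not cancel, and ruling out cancellation between $S_\theta$ and $P_\theta$. Neither is needed. The moral is that for a \emph{lower} bound on a bilinear spherical average, an oscillatory construction is the wrong tool; positivity of the integrand is the right one.
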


To simplify notation, we put $S_\theta :=S_{\theta,1}$, $\Gamma(\zeta):=\Gamma_1(\zeta)$ and denote by $\sigma_k$ the Lebesgue measure of this Ewald sphere. 
The key idea behind the proof of this theorem is to  study the asymptotic behavior of $|{ {B_{\theta,2}}(q)}(\eta)|$ when $|\eta| \to \infty $ and $q=g_\beta$. Now,  by construction (see Proposition \ref{prop:constr} below), $g_\beta$ has  real and nonnegative Fourier transform. Hence, by (\ref{eq:Bformula}),  for  $\eta\in H_\theta$, ${B_{\theta,2}(g_\beta)}(\eta)$ has a real part given by $P_\theta(g_\beta)(\eta)$ and an imaginary part given by $ \pi S_\theta(g_\beta)(\eta)$ (see Proposition \ref{prop.fixangleformula}). Therefore, since there are no possible cancellations between the real and imaginary parts,  we are going to study  the asymptotic behavior  of only the spherical integral $S_\theta(g_\beta)$, which has the advantage of having a positive integrand.  The key estimate is the following.
\begin{lemma} \label{lemma.lowBound}
Consider the half cone  $D_\theta := \{ \eta \in \RR^n :\eta \cdot \theta \le -a |\eta|\}$ for some $0<a<1$. Assume also that $q_\beta \in L^2(\RR^n)$ satisfies the following conditions,
\begin{enumerate}[i)]
\item Its Fourier transform $\widehat{q_\beta}(\xi)$ is continuous, real and nonnegative in all $ \RR^n$.

\item There is a constant $c>0$ such that if $|\xi|>c$, then $\widehat{q_\beta}(\xi)\ge C\jp{\xi}^{-n/2-\beta}$.

\item   $\widehat{q_\beta}(0) >0$.
\end{enumerate}
Then we  have that if  $|\eta|>4c$, there is a constant $C$ independent of $\eta$ and $\theta$ such that
\begin{equation*}  
S_\theta(q_\beta)(\eta) \ge  C \chi_{D_\theta}(\eta) \max \left(  \jp{\eta}^{-\beta-n/2 -1} , \jp{\eta}^{-2\beta-2 }\right ) ,
\end{equation*}
where $\chi_{D_\theta}$ denotes the characteristic function of the cone.
\end{lemma}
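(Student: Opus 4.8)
The plan is to bound $S_\theta(q_\beta)(\eta)$ from below by restricting the integral over the Ewald sphere $\Gamma(-2k\theta)$ to a portion where both Fourier factors $\widehat{q_\beta}(\xi)$ and $\widehat{q_\beta}(\eta-\xi)$ are controlled from below, and then estimating the measure of that portion. Since all quantities in the integrand $\frac{1}{2k}\int_{\Gamma(-2k\theta)} \widehat{q_\beta}(\xi)\widehat{q_\beta}(\eta-\xi)\,d\sigma_k(\xi)$ are nonnegative by hypothesis (i), such a restriction only loses a constant. Recall that $\Gamma(-2k\theta)=\Gamma_1(-2k\theta)$ is the sphere through the origin and through $-2k\theta$, with center $-k\theta$ and radius $k=k(\eta,\theta)=-|\eta|^2/(\theta\cdot\eta)$; note $|\eta|\le 2k$ always, and on the cone $D_\theta$ we have $\theta\cdot\eta\le -a|\eta|$, so $k\le |\eta|/a$, i.e.\ $k\sim |\eta|$ up to constants depending on $a$. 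Thus on $D_\theta$ the sphere $\Gamma(-2k\theta)$ has radius comparable to $|\eta|$.

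The first step is to identify a good arc. The point $\eta$ itself lies on $\Gamma(-2k\theta)$: indeed $|\eta-(-k\theta)|^2=|\eta|^2+2k(\theta\cdot\eta)+k^2 = |\eta|^2 - 2|\eta|^2 + k^2 = k^2-|\eta|^2$... one checks instead that $\xi=\eta$ gives $\xi\cdot(\xi+2k\theta)=|\eta|^2+2k(\theta\cdot\eta)=|\eta|^2-2|\eta|^2\cdot(\ldots)$; the clean fact is that $0\in\Gamma(-2k\theta)$ and $-2k\theta\in\Gamma(-2k\theta)$, and I will use the two natural ``caps'' near these special points. Near $\xi=0$: there $\widehat{q_\beta}(\xi)$ is close to $\widehat{q_\beta}(0)>0$ by continuity (hypothesis (iii)), and $\widehat{q_\beta}(\eta-\xi)\gtrsim\jp{\eta}^{-n/2-\beta}$ by hypothesis (ii) provided $|\eta-\xi|>c$, which holds on a cap of $\Gamma$ around $0$ of angular size bounded below (using $|\eta|>4c$). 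The measure of a cap of $\Gamma$ of fixed angular aperture is $\sim k^{n-1}\sim|\eta|^{n-1}$, and dividing by the $2k\sim|\eta|$ in front yields a contribution $\gtrsim |\eta|^{n-2}\cdot|\eta|^{-n/2-\beta}=|\eta|^{n/2-\beta-2}$, which is not quite what we want. So instead the correct cap to use is the one where BOTH arguments stay bounded: I take $\xi$ on the arc where simultaneously $|\xi|\lesssim|\eta|$ but $|\xi|$ ranges over a set of measure $\sim|\eta|^{n-1}$ with $\widehat{q_\beta}(\xi)\gtrsim\jp{\xi}^{-n/2-\beta}\gtrsim|\eta|^{-n/2-\beta}$ and likewise $\widehat{q_\beta}(\eta-\xi)\gtrsim|\eta|^{-n/2-\beta}$; this gives $\frac{1}{k}\cdot k^{n-1}\cdot|\eta|^{-n-2\beta}\sim |\eta|^{n-2-n-2\beta}=|\eta|^{-2\beta-2}$, matching the second term in the max. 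For the first term $\jp{\eta}^{-\beta-n/2-1}$, I instead localize to a small cap around the point $\xi=0$ of angular radius $\sim|\eta|^{-1}$ (so $|\xi|\lesssim 1$ there), on which $\widehat{q_\beta}(\xi)\gtrsim 1$ by (iii) and $\widehat{q_\beta}(\eta-\xi)\gtrsim\jp{\eta}^{-n/2-\beta}$ by (ii); this cap has measure $\sim k^{n-1}\cdot(|\eta|^{-1})^{n-1}\sim 1$... that's too small. Rather, the first bound should come from a cap of angular radius bounded below around a point where $\eta-\xi$ is small, i.e.\ $\xi$ near $\eta$; there one needs $\eta$ near $\Gamma$, which on $D_\theta$ forces controlling $|\,|\eta+k\theta|-k\,|$. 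I expect the honest computation to show that near the point of $\Gamma$ closest to $\eta$ the factor $\widehat{q_\beta}(\eta-\xi)$ can be taken $\gtrsim 1$ over a cap of measure $\sim |\eta|^{(n-1)/2}$ (a cap whose linear size is $\sim|\eta|^{1/2}$, the generic ``tangential'' scale), yielding $\frac1k\cdot|\eta|^{(n-1)/2}\cdot|\eta|^{-n/2-\beta}\sim|\eta|^{-1+(n-1)/2-n/2-\beta}=|\eta|^{-\beta-n/2-3/2}$; getting exactly $|\eta|^{-\beta-n/2-1}$ will require the cap to have linear size $\sim|\eta|$ in one direction, which is the generic situation when $\eta\in D_\theta$ is well inside the cone. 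I would organize this as: (a) reduce to $\eta\in D_\theta$, $k\sim|\eta|$; (b) parametrize $\Gamma(-2k\theta)$ by $\xi=k\omega-k\theta$, $\omega\in\SP^{n-1}$, turning the surface integral into $2k^{n-2}\int_{\SP^{n-1}}\widehat{q_\beta}(k\omega-k\theta)\widehat{q_\beta}(\eta-k\omega+k\theta)\,d\sigma(\omega)$; (c) for the $\jp{\eta}^{-2\beta-2}$ bound, restrict $\omega$ to a fixed-size cap away from the poles $\theta$ and $-\theta+\eta/k$, apply (ii) to both factors, and compute; (d) for the $\jp{\eta}^{-\beta-n/2-1}$ bound, restrict $\omega$ to a cap of angular radius $\sim 1$ where $k\omega-k\theta$ stays bounded (so the first factor is $\gtrsim\widehat{q_\beta}(0)$ by continuity), apply (ii) to the second factor, and compute $k^{n-2}\cdot 1\cdot|\eta|^{-n/2-\beta}$.

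Let me reconsider (d) once more, since this is where I expect the main obstacle: the set $\{\omega\in\SP^{n-1}: |k\omega-k\theta|<c\}$ is a cap around $\omega=\theta$ of angular radius $\sim c/k\sim|\eta|^{-1}$, hence of measure $\sim|\eta|^{-(n-1)}$, giving only $k^{n-2}\cdot|\eta|^{-(n-1)}\cdot|\eta|^{-n/2-\beta}\sim|\eta|^{-1-n/2-\beta}$ — which is exactly the claimed first bound. So (d) works as is, with the cap being the small one around $\theta$. Good. Conversely for (c) I want $\omega$ in a fixed cap so that $|k\omega-k\theta|\sim k\sim|\eta|>c$ and also $|\eta-k\omega+k\theta|\sim|\eta|>c$; since $|\eta|>4c$ both hold on, say, the cap $\{\omega\cdot\theta<1/2,\ \omega\cdot(\theta-\eta/k)<1/2\}$ which has measure bounded below, giving $k^{n-2}\cdot 1\cdot(|\eta|^{-n/2-\beta})^2\sim|\eta|^{n-2-n-2\beta}=|\eta|^{-2\beta-2}$ as claimed. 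Taking the larger of the two caps' contributions gives the $\max$. The main technical care is uniformity in $\theta$ and $\eta\in D_\theta$ of all the implied constants — this follows because after the parametrization everything depends on $\theta$ and $\eta$ only through $k\sim_a|\eta|$ and through the two ``pole'' directions on $\SP^{n-1}$, and the caps I chose are defined in a rotation-invariant way, so the constants depend only on $n$, $a$, $c$, $\widehat{q_\beta}(0)$ and the constant in (ii). Finally, multiplying through by $\chi_{D_\theta}(\eta)$ and recording that the estimate is vacuous (lower bound $0$) off $D_\theta$ completes the proof; the restriction $|\eta|>4c$ is exactly what makes both caps nonempty and keeps $\jp{\cdot}\sim|\cdot|$ on the relevant arcs.
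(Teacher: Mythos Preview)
Your final argument (after the exploratory detours) is essentially the paper's proof: on $D_\theta$ one has $k\sim|\eta|$, the bound $\jp{\eta}^{-2\beta-2}$ comes from the bulk of the Ewald sphere where both $|\xi|>c$ and $|\eta-\xi|>c$ so hypothesis (ii) applies to each factor, and the bound $\jp{\eta}^{-\beta-n/2-1}$ comes from the small cap near $\xi=0$ where one factor is bounded below by (iii) and the other by (ii). One small correction to (d): the cap should have radius equal to the continuity radius $\varepsilon$ at the origin (chosen so that $\widehat{q_\beta}\ge \tfrac12\widehat{q_\beta}(0)$ on $B_\varepsilon$), not the constant $c$ from (ii) --- hypothesis (iii) alone does not give a lower bound on all of $\{|\xi|<c\}$ --- but this changes nothing in the exponent count.
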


\begin{proof} 
Observe that if $\eta \cdot \theta  \le -a |\eta|$, by (\ref{eq.keta}) we have that $k \sim |\eta|$. Since $\widehat{q_\beta}$ is nonnegative, we have that
\begin{equation} \label{eq.example.2}
 S_\theta(q_\beta)(\eta) \ge   \frac{1}{2k}\int_{A(\eta)}\widehat{q_\beta}(\xi)\widehat{q_\beta}(\eta-\xi) \,d\sigma_k(\xi),
\end{equation}
where $A(\eta)\subset \Gamma(-2k\theta)$ is defined  as follows
\[A(\eta) := \{ \xi \in \Gamma(-2k\theta): |\xi|>c \, \, \, \text{and} \, \, \, |\eta-\xi|>c \}.\]
That is, we take the points on the Ewald sphere which are not contained in two  balls of radius $c$ centered, respectively, around $\eta$ and the origin. This implies that the measure of $A(\eta)$ satisfies $|A(\eta)| \ge Ck^{n-1}$  for some constant $C>0$, since $|\eta|>4c$ implies $k>2c$.  Therefore, by condition $ii)$, in (\ref{eq.example.2}) we get
\begin{align}
\nonumber S_\theta(q_\beta)(\eta) &\ge C\frac{1}{k}\int_{A(\eta)} \jp{\eta-\xi}^{-\beta-n/2}\jp{\xi}^{-\beta-n/2} \,d\sigma_k(\xi)\\
  \label{eq.example.3} &\ge  C \frac{1}{k}\jp{\eta}^{-2\beta-n} k^{n-1} \ge C \jp{\eta}^{-2\beta -2},
\end{align}
where  we have used that $|\xi|\le 2k \le C|\eta|$ and  that ${|\eta-\xi| \le |\eta| + 2k \le C|\eta|}$. 

 Now, if $\widehat{q_\beta}$ is continuous  and $\widehat{q_\beta}(0)>0$, we can take a ball $B_\varepsilon$ around the origin of radius $0 < \varepsilon< c$  such that $\widehat{q_\beta}{(\xi)}$ is positive in its closure. Then if $|\eta|> 2c$, $\xi \in B_\varepsilon\cap \Gamma(-2k\theta)$ implies $|\eta-\xi| > c$, so
\begin{align}
\nonumber S_\theta(q_\beta)(\eta) &\ge  \frac{1}{k}\int_{B_\varepsilon\cap \Gamma(-2k\theta)}\widehat{q_\beta}(\xi)\widehat{q_\beta}(\eta-\xi) \,d\sigma_k(\xi)\\
\label{eq.example.4} &\ge C \frac{1}{k}\int_{B_\varepsilon\cap\Gamma(-2k\theta)} \jp{\eta-\xi}^{-\beta-n/2} \,d\sigma_k(\xi) \ge  C \jp{\eta}^{-\beta-n/2 -1} ,
\end{align}
using again that  $|\eta-\xi| \le C|\eta|$ and that the measure $|B_\varepsilon\cap\Gamma(-2k\theta)|$ is bounded below by a positive constant independent of $\eta$ (this is because the region $B_\varepsilon\cap\Gamma(-2k\theta)$ approaches a flat disc of radius $\varepsilon$ for $\eta$ large). To finish we just have  to put together (\ref{eq.example.3}) and (\ref{eq.example.4}).
\end{proof}
We construct now the family of functions $g_\beta$.
  \begin{proposition}\label{prop:constr}
For every $0<\beta<\infty$ there is a radial, real and compactly supported function $g_\beta \in W^{\gamma,2}(\mathbb{R}^n)$ for every  $\gamma<\beta$, such that $\widehat{g_\beta}$ is nonnegative in $\mathbb{R}^n$, $\widehat{g_\beta}(0)>0$, and for  some $c>0$ we have that
\begin{equation}  \label{eq:gbLowerbound}
\widehat{g_\beta}(\xi) \ge  C <\xi>^{-n/2-\beta} \text{\, \, if \, \,} |\xi|>c.
\end{equation}
\end{proposition}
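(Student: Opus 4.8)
The plan is to build $g_\beta$ directly on the Fourier side, so that all three required properties — nonnegativity of $\widehat{g_\beta}$, positivity at the origin, and the lower bound \eqref{eq:gbLowerbound} — become transparent, and then verify compact support and the Sobolev regularity. A clean way to do this is to take $\widehat{g_\beta}$ to be (a smoothed, radial version of) $\jp{\xi}^{-n/2-\beta}$ itself. More precisely, I would start from a nonnegative, radial, smooth compactly supported bump $\varphi \in C_c^\infty(\RR^n)$ with $\varphi \ge 0$ everywhere, $\widehat{\varphi} \ge 0$ everywhere, $\widehat{\varphi}(0) > 0$, and $\widehat{\varphi}$ strictly positive on a neighbourhood of the origin (for instance $\varphi = \psi * \widetilde\psi$ where $\widetilde\psi(x) = \overline{\psi(-x)}$ and $\psi \in C_c^\infty$ is radial, so that $\widehat{\varphi} = |\widehat{\psi}|^2 \ge 0$; choosing $\psi \ge 0$ and radial makes $\varphi$ compactly supported, radial, and gives $\widehat{\varphi}(0) = (\int \psi)^2 > 0$). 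Then define $g_\beta$ by
\begin{equation*}
\widehat{g_\beta}(\xi) := \jp{\xi}^{-n/2-\beta}\, \widehat{\varphi}(\xi) + \widehat{\varphi}(\xi),
\end{equation*}
or even more simply $\widehat{g_\beta}(\xi) := \jp{\xi}^{-n/2-\beta}$ regularized near infinity by nothing at all — since $\jp{\xi}^{-n/2-\beta}$ is already smooth everywhere — but then compact support of $g_\beta$ must be recovered separately, which is the delicate point addressed below.

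The first route (multiplying $\jp{\xi}^{-n/2-\beta}$ by $\widehat\varphi$) does not give compact support of $g_\beta$ either, so the real issue is: we want $g_\beta$ compactly supported in $x$, yet with $\widehat{g_\beta}$ pointwise $\gtrsim \jp{\xi}^{-n/2-\beta}$ and everywhere nonnegative. The cleanest fix is to write $g_\beta = \phi \cdot h_\beta$ where $\phi \in C_c^\infty(\RR^n)$ is a fixed radial cutoff, $0 \le \phi \le 1$, $\phi \equiv 1$ near the origin, and $h_\beta$ is the radial function with $\widehat{h_\beta}(\xi) = c_{n,\beta}\jp{\xi}^{-n/2-\beta}$ — i.e. $h_\beta$ is (a constant multiple of) a Bessel potential kernel $G_{n/2+\beta}$. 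Then $g_\beta$ is compactly supported by construction; $g_\beta \in W^{\gamma,2}$ for every $\gamma < \beta$ because $h_\beta \in W^{\gamma,2}_{loc}$ precisely when $\gamma < \beta$ (as $\jp{\xi}^\gamma \jp{\xi}^{-n/2-\beta} \in L^2$ iff $\gamma - \beta < 0$) and multiplication by $\phi \in C_c^\infty$ preserves $W^{\gamma,2}$ and localizes it. It remains to control $\widehat{g_\beta} = \widehat\phi * \widehat{h_\beta}$: I must show it is nonnegative and satisfies \eqref{eq:gbLowerbound}. For nonnegativity I would again arrange $\phi = \psi * \widetilde\psi$ with $\psi$ real, radial, so that $\widehat\phi = |\widehat\psi|^2 \ge 0$; since $\widehat{h_\beta} \ge 0$ as well, the convolution $\widehat\phi * \widehat{h_\beta}$ is nonnegative, and positive everywhere (in particular at $0$) as long as $\widehat\phi$ and $\widehat{h_\beta}$ are not a.e. zero, which they are not.

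The main obstacle is the lower bound \eqref{eq:gbLowerbound}: convolving $\widehat{h_\beta}(\xi) = c\jp{\xi}^{-n/2-\beta}$ with the probability-like density $\widehat\phi$ (after normalizing $\int\widehat\phi = \phi(0) \ne 0$, or rescaling) should not destroy the decay rate, because $\jp{\cdot}^{-n/2-\beta}$ is a slowly varying, essentially radially decreasing weight: for $|\xi|$ large and $|\zeta|$ in the (fixed) effective support scale of $\widehat\phi$, one has $\jp{\xi - \zeta}^{-n/2-\beta} \gtrsim \jp{\xi}^{-n/2-\beta}$ uniformly, so
\begin{equation*}
\widehat{g_\beta}(\xi) = \int \widehat\phi(\zeta)\, \widehat{h_\beta}(\xi-\zeta)\, d\zeta \gtrsim \jp{\xi}^{-n/2-\beta} \int_{|\zeta| \le R} \widehat\phi(\zeta)\, d\zeta \gtrsim \jp{\xi}^{-n/2-\beta}
\end{equation*}
for $|\xi| > c$, provided $\widehat\phi$ has positive integral over some ball (true since $\widehat\phi \ge 0$ and $\widehat\phi(0) > 0$ with $\widehat\phi$ continuous). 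Here I would need the elementary estimate $\jp{\xi - \zeta} \le \jp{\xi}\jp{\zeta} \le \jp{\xi}(1+R)$ on $|\zeta| \le R$, giving $\jp{\xi-\zeta}^{-n/2-\beta} \ge (1+R)^{-n/2-\beta}\jp{\xi}^{-n/2-\beta}$; integrating this against $\widehat\phi \ge 0$ over $|\zeta| \le R$ yields the claim with an explicit constant. The continuity of $\widehat{g_\beta}$ follows from $\widehat{g_\beta} \in L^1$ or directly from dominated convergence in the convolution. I expect the only slightly technical points to be (i) making sure $\widehat\phi \ge 0$ can be arranged simultaneously with $\phi$ compactly supported, radial, and $\phi \equiv 1$ near $0$ — which is why I use a self-convolution $\psi * \widetilde\psi$ and then, if needed, renormalize so that $\phi(0) = 1$ or simply absorb constants into $c_{n,\beta}$ — and (ii) bookkeeping the weights $\jp{\cdot}$ in the convolution lower bound, both of which are routine.
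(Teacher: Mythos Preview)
Your proposal is correct and matches the paper's approach almost exactly: take the Bessel kernel $G_\beta$ with $\widehat{G_\beta}(\xi)=\jp{\xi}^{-n/2-\beta}$, multiply by a compactly supported radial $\phi=\psi*\psi$ (so $\widehat\phi=\widehat\psi^{\,2}\ge 0$), and obtain the lower bound from $\widehat{g_\beta}=\widehat\phi*\widehat{G_\beta}$ by restricting the convolution integral to a ball where $\widehat\phi$ is bounded below. The condition $\phi\equiv 1$ near $0$ that you flag as potentially delicate is not needed --- the paper only uses $\phi(0)\neq 0$ --- and you correctly note it can be dropped by absorbing constants.
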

\begin{proof}
We introduce the functions $G_\beta(x)$ given  by the relation
\[\widehat{G_\beta}(\xi) := \frac{1}{ <\xi>^{n/2+\beta}}.\] 
These functions are,  up to normalizing factors,  kernels of Bessel potential operators. We observe that the Fourier transform of a radial and real function in $\mathbb{R}^n$ is also radial and real. 
As a consequence, the  $G_\beta$ functions satisfy the  statement of the proposition except for the condition of compact support.

The regularity properties of the $G_\beta$ function are determined by its behavior when $|x|\to 0$. Far from the origin $G_\beta(x)$ is smooth with exponential decay (see, for example, chapter V of \cite{stein}).  This motivates us to choose $g_\beta = \phi G_\beta$ where, $\phi$ is any $C^\infty_c(\mathbb{R}^n)$ function nonvanishing at the origin. Then clearly we have $g_\beta \in W^{\gamma,2}(\mathbb{R}^n)$ for every $\gamma<\beta$, as desired.

 The rest of the properties of $g_\beta$ follow if we choose $\phi$ in the following way. Consider again a  radial and real function $\psi\in C^\infty_c(\mathbb{R}^n)$ such that $\widehat{\psi} (0) \neq 0$, and put  $\phi= \psi*\psi$. Then $\phi$ is going to be compactly supported, radial, real, and nonzero at the origin. Moreover its Fourier transform satisfies $\widehat \phi(\xi)= \widehat \psi(\xi)^2 \ge 0$ for all $\xi \in \mathbb{R}^n$ and also that $\widehat{ \phi}(0) >0$. 
 
Using this we get that  $\widehat{g_\beta}(\xi) = \widehat{\phi}*\widehat{G_\beta}(\xi)$ is  nonnegative. Also, since $\widehat{ \phi}(0) >0$, there is an $\varepsilon>0$ such that $\widehat{\phi}(\xi)$ is bounded below in $B_\varepsilon = \{ \xi \in \mathbb{R}^n : |\xi|<\varepsilon\}$.
 This yields \eqref{eq:gbLowerbound} since we have 
 \begin{multline*}
\widehat{g_\beta}(\xi) =  \int_{\mathbb{R}^n} \widehat{G_\beta}(\xi-\zeta)\widehat \phi(\zeta)\,d\zeta \ge  \int_{B_\varepsilon} <\xi-\zeta>^{-n/2-\beta} \widehat \phi(\zeta)\,d\zeta \\
 \ge C \int_{B_\varepsilon} <\xi-\zeta>^{-n/2-\beta} \, d\zeta  \ge C <|\xi|+\varepsilon>^{-n/2-\beta} \ge C <\xi>^{-n/2-\beta},
 \end{multline*}
for $|\xi|> \varepsilon$. 
To finish the proof we only have to verify that $\widehat{g_\beta}(0)>0$. But this is immediate,
\begin{equation*}
\widehat{g_\beta}(0) = \int_{\mathbb{R}^n} \widehat{G_\beta}(-\xi)\widehat \phi(\xi)\,d\xi>0,
\end{equation*} 
since $\widehat{G_\beta}(\xi)>0$  and $\widehat \phi(\xi)\ge 0$ for every $\xi\in \mathbb{R}^n$.
\end{proof}

We can now prove  Theorem \ref{teo.count}, with the help of the following simple result.
\begin{lemma} \label{lemma:Wloc}
Let $D_\theta$ be a half cone as  defined  in Lemma $\ref{lemma.lowBound}$, and  let $f \in \mathcal S'(\RR^n)$ be such that $\widehat f \in L^1_{loc}(\RR^n)$ and  $\widehat{f}(\eta)\ge 0$ almost everywhere. Assume also that for some $c>0$, $\gamma \in \RR $ and $|\eta|>c$ we have   ${\widehat{f}(\eta)\ge C \jp{\eta}^{-n/2-\gamma}}$ for $\eta \in {D_\theta}$. Then we have that   $f\notin W_{loc}^{\alpha,2}$ if $\alpha \ge \gamma$.
\end{lemma}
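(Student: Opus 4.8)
The plan is to argue by contradiction from a local-to-global reduction. Suppose $f \in W^{\alpha,2}_{loc}$ with $\alpha \ge \gamma$. Since the lower bound $\widehat f(\eta) \ge C\jp{\eta}^{-n/2-\gamma}$ on $D_\theta$ only controls the behaviour of $\widehat f$ at infinity, and $W^{\alpha,2}_{loc}$ membership means $\phi f \in W^{\alpha,2}$ for every $\phi \in C^\infty_c$, the first step is to choose a fixed cut-off $\phi \in C^\infty_c(\RR^n)$ with $\widehat \phi$ concentrated near the origin so that multiplication by $\phi$ (convolution by $\widehat\phi$ on the Fourier side) does not destroy the pointwise lower bound. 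Concretely, since $\widehat f \ge 0$ everywhere, one can pick $\phi = \psi * \overline{\tilde\psi}$ of positive type so that $\widehat\phi \ge 0$, and then for $|\eta|$ large and $\eta$ in a slightly narrower subcone $D'_\theta \subset D_\theta$ one estimates
\[
\widehat{\phi f}(\eta) = (\widehat\phi * \widehat f)(\eta) \ge \int_{|\zeta| \le \varepsilon} \widehat\phi(\zeta)\, \widehat f(\eta - \zeta)\, d\zeta \ge c_0 \int_{|\zeta|\le\varepsilon} \jp{\eta-\zeta}^{-n/2-\gamma}\,d\zeta \ge C \jp{\eta}^{-n/2-\gamma},
\]
using that $\eta - \zeta$ stays in $D_\theta$ with $|\eta-\zeta| \sim |\eta|$ when $\eta \in D'_\theta$ is large and $|\zeta|$ is small.

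The second step is the norm computation. If $\phi f \in W^{\alpha,2}$ then
\[
\int_{\RR^n} \jp{\eta}^{2\alpha}\, |\widehat{\phi f}(\eta)|^2 \, d\eta < \infty,
\]
but restricting the integral to $\{\eta \in D'_\theta : |\eta| > c'\}$ and inserting the lower bound gives
\[
\int_{D'_\theta,\ |\eta|>c'} \jp{\eta}^{2\alpha}\, \jp{\eta}^{-n-2\gamma}\, d\eta \le C \int \jp{\eta}^{2\alpha}|\widehat{\phi f}(\eta)|^2\,d\eta < \infty.
\]
Passing to polar coordinates on the cone $D'_\theta$, whose angular cross-section has fixed positive measure, the radial integral becomes $\int_{c'}^\infty \rho^{2\alpha - n - 2\gamma}\, \rho^{n-1}\, d\rho = \int_{c'}^\infty \rho^{2(\alpha-\gamma)-1}\, d\rho$, which diverges exactly when $\alpha \ge \gamma$. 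This contradiction proves $f \notin W^{\alpha,2}_{loc}$ for $\alpha \ge \gamma$.

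The main obstacle — really the only subtle point — is the first step: ensuring the multiplication by a compactly supported cut-off, which is forced on us by the \emph{local} nature of $W^{\alpha,2}_{loc}$, does not spoil the pointwise lower bound on $\widehat f$. This works precisely because $\widehat f \ge 0$ (so there are no cancellations in the convolution $\widehat\phi * \widehat f$) and because one is free to take $\widehat\phi$ supported in an arbitrarily small neighbourhood of the origin, so that $\widehat f(\eta-\zeta)$ is evaluated only at points near $\eta$, which remain in the cone with comparable modulus; shrinking $D_\theta$ slightly to $D'_\theta$ absorbs the boundary effect. Everything else — the polar-coordinate divergence and the contradiction with finiteness of the Sobolev norm — is routine. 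When applied in the proof of Theorem~\ref{teo.count}, this lemma will be used with $f = Q_{\theta,2}(g_\beta)$ (resp. $Q_{F,2}(g_\beta)$) and $\gamma$ chosen according to which of the two bounds in Lemma~\ref{lemma.lowBound} dominates, giving the threshold $\alpha_0 = \min(\beta+1, 2\beta - (n-4)/2)$.
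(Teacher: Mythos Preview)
Your argument is correct and essentially identical to the paper's proof: choose a cut-off $\phi\in C^\infty_c$ with $\widehat\phi\ge 0$ and $\widehat\phi(0)>0$, restrict the convolution $\widehat\phi*\widehat f$ to a small ball $B_\varepsilon$ (legitimate since the integrand is nonnegative), use continuity to bound $\widehat\phi$ below on $B_\varepsilon$, and observe that $\eta-\zeta$ stays in $D_\theta$ with $|\eta-\zeta|\sim|\eta|$ on a slightly shrunken cone; the resulting lower bound $\widehat{\phi f}(\eta)\ge C\jp{\eta}^{-n/2-\gamma}$ then forces $\phi f\notin W^{\alpha,2}$ for $\alpha\ge\gamma$ by the divergent radial integral you wrote.

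One wording slip to fix: in your final paragraph you say one is ``free to take $\widehat\phi$ supported in an arbitrarily small neighbourhood of the origin''. This is impossible for $\phi\in C^\infty_c$, since $\widehat\phi$ is then entire and cannot have compact support. What you actually use --- and what your displayed inequality correctly exploits --- is that $\widehat\phi\ge 0$ everywhere and $\widehat\phi\ge c_0>0$ on $B_\varepsilon$, so that discarding the integral over $\RR^n\setminus B_\varepsilon$ only decreases $\widehat{\phi f}$. Replace the support claim by this positivity-and-lower-bound argument and the proof is complete.
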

We postpone the proof of this lemma until the end of this section.
\begin{proof}[Proof of Theorem $\ref{teo.count}$]
By Proposition \ref{prop:constr}, we have that $g_\beta \in W^{\gamma,2}$ if and only if $\gamma<\beta$. Let's   prove $i)$. $g_\beta$ satisfies all of the conditions necessary to apply Lemma \ref{lemma.lowBound}, and  hence, for $|\eta|>4c$, we have that
\begin{equation}\label{eq.max1}
S_\theta(g_\beta)(\eta) \ge C \chi_{D_\theta}(\eta) \max \left(  \jp{\eta}^{-\beta-n/2 -1} , \jp{\eta}^{-2\beta-2 }\right ).
 \end{equation}
 As we mentioned before, since $g_\beta$ is real, there are no cancellations possible between $P_\theta(g_\beta)$ and $i\pi S_\theta(g_\beta)$. Hence if we assume $Q_{\theta,2}(g_\beta) \in W_{loc}^{\alpha,2}$, it implies that $\mathcal F^{-1} (S_\theta(g_\beta)) \in W_{loc}^{\alpha,2} $. As a consequence, by Lemma \ref{lemma:Wloc} and (\ref{eq.max1}) we obtain that $\alpha$ must satisfy  simultaneously $\alpha< \beta+1$ and  $\alpha< 2\beta+ (n-4)/2$.

To prove $ii)$ observe that taking the imaginary part of (\ref{eq.BjF}), we have
\begin{equation*} 
 \mathcal{I} (\widehat{Q_{F,2}(g_\beta)})(\eta) =  \frac{2\pi}{|\SP^{n-1}|}\int_{ \{\theta \in S^{n-1}: \, \eta \cdot \theta<0\}} S_\theta(g_\beta)(\eta) \, d\sigma(\theta).
\end{equation*}
 Therefore, since the integrand is positive for every $\theta$, if we consider  an $\eta$  fixed satisfying $|\eta|>4c$, we can restrict the integral to the subset of points  $\theta \in \SP^{n-1}$ such that $\eta \in D_\theta$. Then we obtain
 \begin{align*} 
 \mathcal{I} (\widehat{Q_{F,2}(g_\beta)})(\eta) &\ge  \frac{2 \pi }{|\SP^{n-1}|} \int_{\{ \theta \in S^{n-1}: \, \, \eta \cdot \theta < - a |\eta| \}} { S_\theta(g_\beta)}(\eta) \, d\sigma(\theta) \\
 &\ge C \max \left(  \jp{\eta}^{-\beta-n/2 -1} , \jp{\eta}^{-2\beta-2 }\right ),
\end{align*}
where the last line follows from (\ref{eq.max1}). From this estimate, reasoning as in the proof of $i)$ we obtain $ii)$.
\end{proof}
 We can finally prove the remaining main theorems of this work.
 \begin{proof}[End of proof of theorem $\ref{teo.Q2}$]
  Condition (\ref{eq.remarkable}) is an immediate consequence of Theorem \ref{teo.count}.
By Proposition \ref{prop.fixangleformula} and equations (\ref{eq.QjB}) and (\ref{eq:Bformula}), we have that 
\[\widehat{\widetilde Q_{\theta,2}(q)}(\eta)= \chi_{H_\theta}(\eta) \left(\widetilde S_\theta(q)(\eta) + \widetilde P_\theta(q)(\eta)\right) + \chi_{H_{-\theta}}(\eta) \left(\widetilde S_{-\theta}(q)(\eta) + \widetilde P_{-\theta}(q)(\eta)\right),\]
so, the estimate of the spherical operators follows from Lemma $\ref{lemma.QspheL2}$ with $\varepsilon=0$ and $r=1$, and the estimate of the principal value operators from Proposition \ref{prop.PV}.
\end{proof}
\begin{proof}[Proof of Theorem $\ref{teo.1}$]
Take $\alpha\ge 0$ and assume that we have that $q-q_\theta \in W^{\alpha,2}_{loc}(\RR^n)$ for every compactly supported, real and radial potential $q\in W^{\beta,2}(\RR^n)$. We are going to prove that then necessarily $Q_{\theta,j}(q) \in W^{\alpha,2}_{loc}(\RR^n)$ also.

Consider the Born series in (\ref{eq.seriesT}), for the potential $q(\lambda) = \lambda q$, where $\lambda \in (0,1)$, and denote by $q_\theta(\lambda)$ its corresponding Born approximation. By the multilinearity  of the $\widetilde Q_{\theta,j}$ operators we have
\begin{equation} \label{eq.algebra.1}
{\lambda q}- { q}_{\theta}(\lambda) =  - \sum_{j=2}^{l-1} \lambda^{j}{\widetilde Q_{\theta,j}(q)} + \sum_{j=l}^{\infty} \lambda^{j}{\widetilde Q_{\theta,j}(q)},
\end{equation}
modulo a $C^\infty$ function (possibly  dependent on $\lambda$).
 By Proposition \ref{prop.Qj}, we have that if $\beta \ge m$, we can take  $l$ in (\ref{eq.algebra.1}) such that $\alpha< \alpha_l $, and hence $\sum_{j=l}^{\infty} \lambda^{j}{\widetilde Q_{\theta,j}(q)} $   will converge in $W^{\alpha,2}(\RR^n)$, for every $\lambda \in (0,1)$.
 
Let $V := \mathcal{S}'/ W^{\alpha,2}_{loc}$ be the quotient vector space of $\mathcal{S}'(\RR^n)$ with $W^{\alpha,2}_{loc}(\RR^n)$.  We denote the elements of $V$ by $[h]$ where $h\in \mathcal{S}'$ is any member of the equivalence class. Since by hypothesis we have that ${\lambda q}- { q}_{B}(\lambda)$ is  a $W_{loc}^{\alpha,2}(\RR^n)$ function,  then (\ref{eq.algebra.1}) becomes in $V$,
\[ \sum_{j=2}^{l-1} \lambda^{j}\left [ \widetilde Q_{\theta,j}(q)\right] = 0.\]
 Now, for every $2 \le  i \le l-1$, we can always choose a $\lambda_i \in (0,1)$  such that the $l-2$ vectors of $\RR^{l-2}$ with coordinates $(\lambda_i^2,\dots,\lambda_i^{l-1})$ are linearly independent. This implies that  $[\widetilde Q_{\theta,j}(q) ]= 0$, and hence, that $\widetilde Q_{\theta,j}(q) \in W^{\alpha,2}_{loc}(\RR^n)$ for all $2 \le j \le l-1$. As a consequence, we also obtain that $ Q_{\theta,j}(q) \in W^{\alpha,2}_{loc}(\RR^n) $ for every $j$. But by Theorem \ref{teo.Q2}, we know that this implies that  $\alpha \le \min(2\beta-(n-4)/2, \beta +1)$. 
 
 Using  Remark \ref{remark2} and Theorem \ref{teo.count} the case of full data scattering can be proved in the same way.
\end{proof}

\begin{proof}[Proof of Lemma $\ref{lemma:Wloc}$]
In  the proof of Proposition \ref{prop:constr} we have seen that we can take a function $\phi \in C^\infty_c$ such that $\widehat \phi(\xi) \ge 0$ in $\RR^n$ and $\widehat \phi(0)>0$. Then we can choose an $0<\varepsilon<c$ small so that $\widehat{\phi}(\xi)$ is bounded below by a positive constant when $|\xi|<\varepsilon$. Then, if $ D_{\theta,\varepsilon} :=\{ \eta \in D_\theta : (\eta-\xi) \in D_\theta \, \forall \,|\xi|<\varepsilon\}$ and we ask $ |\eta| \ge 2c$ and $\eta \in D_{\theta,\varepsilon}$, we obtain that
\begin{align*}
\widehat{\phi f}(\eta) &= \int_{\RR^n} \widehat \phi(\xi) \widehat{f}(\eta-\xi) \, d\xi \\
&\ge \int_{B_\varepsilon} \widehat \phi(\xi) \widehat{f}(\eta-\xi) \, d\xi \ge C\jp{\eta}^{-n/2-\gamma}.
\end{align*}
As a consequence we have that $\phi f\notin W^{\alpha,2}(\RR^n)$ for $\alpha\ge\gamma$, which implies that $f\notin W_{loc}^{\alpha,2}(\RR^n)$ by  definition of the local Sobolev spaces.
\end{proof}

\section*{Appendix}
 \setcounter{equation}{0}
\renewcommand{\theequation}{A.\arabic{equation}}
 \setcounter{theorem}{0}
     \renewcommand{\thetheorem}{A.\arabic{theorem}}
     
 We begin giving some indications about how to prove Proposition \ref{prop.Qj} from  analogous results in \cite{R}.
 \begin{proof}[Proof of Proposition $\ref{prop.Qj}$]
 In the proof of Proposition 4.1 of \cite{R}, using estimates for the resolvent $R_k$, it is shown that for compactly supported $q$
\begin{equation} \label{eq.Af}
\norm{\widetilde Q_{\theta,j}(q)}_{W^{\alpha,2}}^2 \le  C^{2j} \norm{q}_{W^{\beta,2}}^{2j} \int_{C_0}^{\infty} k^{n-1 + 2\alpha -2\gamma} \, dk,
\end{equation}
(notice that in \cite{R}, the operators are indexed differently, with $j' =j-1$) where $C=C(n,\alpha,\beta,\supp \, q)>0$, 
\begin{equation} \label{eq.Afprima}
\gamma =  \beta-(j-1) - \frac{(n-1)}{2}  \left ( \frac{1}{t_{1}} - \sum_{\ell =1}^{j-1} \left(\frac{1}{r_{\ell}} - \frac{1}{t_{\ell +1}}\right)  -\frac{3}{2} \right),
\end{equation}
and $t_\ell,r_\ell$,  are parameters that for $\ell= 1, \dots, j-1$ must satisfy the conditions $t_{\ell+1} < 2$,  $ t_{\ell+1}<r_\ell$ and
\begin{align}
\nonumber &0\le \frac{1}{t_\ell} -\frac{1}{2} \le \frac{1}{n+1}, \hspace{18mm} 0\le \frac{1}{t_{j+1}} -\frac{1}{2} \le \frac{1}{n+1}  ,\\
\label{eq.Afprimaprima} &  0\le \frac{1}{2} -\frac{1}{r_{\ell}} \le \frac{1}{n+1} ,  \hspace{18mm} 0\le \frac{1}{2} +\frac{1}{r_{\ell}} - \frac{1}{t_{\ell +1}} \le  \frac{\beta}{n}.
\end{align}
 (We are applying the results of \cite{R} always with integrability exponents $p,p_\ell=2$.) The previous inequalities imply that 
 \[ 
 0 < \frac{1}{t_{\ell +1}} -\frac{1}{r_{\ell}}  \le \frac{2}{n+1},
 \]
 and this together with the last condition in (\ref{eq.Afprimaprima}) gives the restriction $\beta \ge \max(0,m)$ where $m$ was defined in (\ref{eq.m}).
 
  Now, we can always choose $t_1 =2$, and $t_{\ell +1}$, $r_\ell$, $\ell= 1, \dots, j-1$ such that they satisfy  all the previous conditions and 
\[ \frac{1}{t_{\ell +1}} -\frac{1}{r_{\ell}} = \max{\left( \varepsilon,\frac{1}{2} - \frac{\beta}{n} \right )},\]
 for any $\varepsilon>0$ small. This choice is slightly different from the one in \cite{R}, and it is the only change necessary to extend their results to the range $\beta \ge n/2$.
 
  On the other hand (\ref{eq.Af}) yields
\begin{equation} \label{eq.Af2}
\norm{\widetilde Q_{\theta,j}(q)}_{W^{\alpha,2}}^2 \le  C^{2j} \norm{q}_{W^{\beta,2}}^{2j} C_0^{2\alpha -2\gamma + n} ,
\end{equation}
if we have that $n-1 + 2\alpha - 2\gamma<-1$ which, together with (\ref{eq.Afprima}) and the previous choice of parameters,  implies
\[ 
\alpha < \beta - \frac{1}{2} + (j-1)  -  (j-1) \frac{(n-1)}{2}  \max{\left( \varepsilon,\frac{1}{2} - \frac{\beta}{n} \right )} .\]
 Since we can take $\epsilon>0$ as small as necessary, this gives the condition  $\alpha<\alpha_j$ where $\alpha_j$ was defined in (\ref{eq.alphaj}). 

To show the convergence of the tail  of the series, take any $\alpha \in \RR$. Then, since $\alpha_j$ grows linearly with $j$ in the range $\max(0,m) \le \beta <\infty$, we can find an $l\ge 2$ and  an $\varepsilon = \varepsilon(n,\beta)>0$ such that $\alpha_j-\alpha >\varepsilon j$  if $j\ge l$. Then by (\ref{eq.Af2}) we have
\begin{equation*} 
\norm{ \sum_{j=l}^\infty \widetilde Q_{\theta,j}(q)}_{W^{\alpha,2}}  \le  \sum_{j=l}^\infty \norm{  \widetilde Q_{\theta,j}(q)}_{W^{\alpha,2}} \le  \sum_{j=l}^\infty   C^j \norm{q}_{W^{\beta,2}}^j C_0^{-\varepsilon j} ,
\end{equation*}
and therefore, taking $C_0$ large as specified in the statement of the proposition,  we get the desired result.
\end{proof}
 The following proposition gives an upper bound for the constant in the trace theorem on spheres. 
\begin{proposition} \label{prop:traceT}
Let $f \in W^{1,2}(\RR^n)$, and let $\SP_\rho\subset \RR^n$ be the any sphere of radius $\rho$. Then we have that 
\[\int_{\SP_\rho} |f(x)|^2 \, d\sigma_\rho(x) \le  \int_{\RR^n} |f(x)|^2 \, dx + \int_{\RR^n} |\nabla f(x)|^2 \, dx .\] 
\end{proposition}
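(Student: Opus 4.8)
The plan is to reduce to the case of a sphere centered at the origin (translation does not change either side) and then, by density, to assume $f \in C^\infty_c(\mathbb{R}^n)$. Writing points of $\mathbb{R}^n$ in polar coordinates $x = r\omega$ with $r>0$ and $\omega \in \mathbb{S}^{n-1}$, the key identity is the fundamental theorem of calculus in the radial variable: for fixed $\omega$,
\[
|f(\rho\omega)|^2 = -\int_\rho^\infty \frac{\partial}{\partial r}\,|f(r\omega)|^2 \, dr = -\int_\rho^\infty 2\,\mathrm{Re}\big(\overline{f(r\omega)}\,\partial_r f(r\omega)\big)\, dr,
\]
since $f$ has compact support. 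Multiplying by $\rho^{n-1}$ and integrating over $\omega \in \mathbb{S}^{n-1}$ gives the left-hand side $\int_{\mathbb{S}_\rho}|f|^2\,d\sigma_\rho$ on the left.

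Next I would bound the right-hand side. The crucial observation is that $\rho^{n-1} \le r^{n-1}$ for all $r \ge \rho$, so after inserting $\rho^{n-1}$ and estimating $\rho^{n-1} \le r^{n-1}$ inside the $r$-integral we obtain
\[
\int_{\mathbb{S}_\rho}|f(x)|^2\,d\sigma_\rho(x) \le \int_{\mathbb{S}^{n-1}}\int_0^\infty 2\,|f(r\omega)|\,|\partial_r f(r\omega)|\, r^{n-1}\,dr\,d\sigma(\omega) = 2\int_{\mathbb{R}^n} |f(x)|\,|\partial_r f(x)|\,dx,
\]
where $\partial_r f = \omega\cdot\nabla f$ is the radial derivative, so $|\partial_r f(x)| \le |\nabla f(x)|$ pointwise. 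Applying the elementary inequality $2ab \le a^2 + b^2$ with $a = |f(x)|$, $b = |\nabla f(x)|$ and integrating yields exactly
\[
\int_{\mathbb{S}_\rho}|f(x)|^2\,d\sigma_\rho(x) \le \int_{\mathbb{R}^n}|f(x)|^2\,dx + \int_{\mathbb{R}^n}|\nabla f(x)|^2\,dx,
\]
which is the claimed bound with constant $1$. The general case $f \in W^{1,2}(\mathbb{R}^n)$ follows by approximating $f$ in the $W^{1,2}$ norm by $C^\infty_c$ functions and passing to the limit, using that the trace operator onto $\mathbb{S}_\rho$ is then seen to be bounded (the inequality just proved shows the approximating sequence is Cauchy in $L^2(\mathbb{S}_\rho)$).

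There is essentially no serious obstacle here: the proposition is a clean consequence of the fundamental theorem of calculus plus the monotonicity $\rho^{n-1}\le r^{n-1}$ for $r\ge\rho$, which is precisely what makes the constant independent of $\rho$. The only mild point of care is justifying the density argument and the interchange of integration (handled by Fubini, legitimate since the integrand is nonnegative or, for $C^\infty_c$ $f$, absolutely integrable), but these are routine.
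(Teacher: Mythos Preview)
Your proof is correct and essentially identical to the paper's: both reduce to a centered sphere, apply the fundamental theorem of calculus radially to $|f|^2$, and finish with $2ab\le a^2+b^2$. The only cosmetic difference is that the paper differentiates $f(r\theta)^2 r^{n-1}$ and discards the resulting nonnegative term $(n-1)\int_\rho^\infty f^2 r^{n-2}\,dr$, whereas you multiply by $\rho^{n-1}$ afterwards and use $\rho^{n-1}\le r^{n-1}$ for $r\ge\rho$; these two steps are equivalent.
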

\begin{proof}
Assume that $\SP_\rho$ is centered in the origin. The general case follows by the invariance under translations of the Sobolev norm. Without loss of generality consider a  real function $f\in C^\infty_c(\RR^n)$. Then using spherical coordinates with $\theta \in \SP^{n-1}$ we have that
\[\frac{d\, \,}{dr} \left( f(r\theta)^2 r^{n-1}\right) = 2\frac{df}{dr}(r\theta) f(r\theta) r^{n-1} + (n-1)f^2(r\theta)r^{n-2}. \]
Fix $\rho\in(0,\infty)$. If we integrate the previous equation in the $r$ variable, by the fundamental theorem of calculus and the compactness of the support of $f$   we have
\begin{align*}
f(\rho\theta)^2 \rho^{n-1} & = -\int^\infty_\rho 2\frac{df}{dr}(r\theta) f(r\theta) r^{n-1} \, dr - (n-1) \int^\infty_\rho  f^2(r\theta)r^{n-2} \,dr \\
&\le 2\int^\infty_0 |\nabla f(r\theta)| |f(r\theta)| r^{n-1} \, dr,
\end{align*}
since the second integral in the first line is negative.
Then, integrating both sides in the unit sphere $\SP^{n-1}$ we recover the statement of the proposition,
\begin{align*}
\int_{\SP^{n-1}} f(\rho\theta)^2 \rho^{n-1} d\sigma(\theta) &\le 2\int_{\RR^n}|\nabla f(x)| |f(x)|  \, dx  \\
&\le \int_{\RR^n} |f(x)|^2  \, dx + \int_{\RR^n}|\nabla f(x)|^2   \, dx. 
\end{align*}
\end{proof}

We now give the proof of Lemma \ref{lemma.integrals}, used in the estimate of the spherical operator.

\begin{proof}[Proof of Lemma $\ref{lemma.integrals}$]
We can consider only the case of spheres $\SP_\rho$ centered on the origin. By homogeneity, if $y=\rho \theta$ we have
 \[ \int_{\SP_\rho} \frac{1}{|x-y|^{(n-1)-2\gamma}} \, d\sigma_\rho(y) =  \rho^{2\gamma} \int_{\SP^{n-1}} \frac{1}{|x'-\theta|^{(n-1)-2\gamma}} \, d\sigma(\theta),\]
 where $x'\rho= x$ and $\SP^{n-1}$ is the sphere of radius 1 centered on the origin. Hence we need to bound uniformly on $x'$ the last integral. Now, assume that $x'\neq 0$ and take $\omega \in \SP^{n-1}$ such that $\omega = x'/|x'|$. Let $P_\omega = \{x\in \RR^n: x \cdot \omega=0\}$, and let $P(z):= z-(z\cdot\omega)\omega$, be the projection of $z\in \RR^n$ on the plane $P_\omega$. Consider the half sphere comprised between this plane and the parallel one that goes trough $\omega$. The Jacobian of the projection $P$ restricted to $\SP^{n-1}$ is bounded if we exclude a small band of $\varepsilon$ width from it. Let's denote this region by $S_\varepsilon$ (the half sphere minus the band). In the first place we have
 \[\int_{\SP^{n-1}} \frac{1}{|x'-\theta|^{(n-1)-2\gamma}} \, d\sigma(\theta) \le 2n \int_{S_\varepsilon} \frac{1}{|x'-\theta|^{(n-1)-2\gamma}} \, d\sigma(\theta).\]
 This is because in the region $S_\varepsilon$ the integrand has larger values than in the rest of the sphere, since we are in the half which is closer to $x'$, and it is possible to cover generously $\SP^{n-1}$ with $2n$ pieces like $S_\varepsilon$. But since the Jacobian of $P$ is bounded we can use the change of variables $y=P(\theta)$ to integrate in the corresponding region of the plane. Hence
\begin{align*}
\int_{S_\varepsilon} \frac{1}{|x'-\theta|^{(n-1)-2\gamma}}& \, d\sigma(\theta) \le  \int_{S_\varepsilon} \frac{1}{|P(\theta)|^{(n-1)-2\gamma}} \, d\sigma(\theta)\\
&\le C \int_{P(S_\varepsilon)} \frac{1}{|y|^{(n-1)-2\gamma}} \, dy \le C \int_{\RR^{n-1}\cap B_1} \frac{1}{|y|^{(n-1)-2\gamma}} \, dy,
\end{align*}
where we have used that $P(x')=0$. The last integral is finite and it does not depend in any way on $x'$. Therefore we have finished the proof.
\end{proof}

\section*{Acknowledgments}

I am very grateful to  my PhD advisors Alberto Ruiz  and Juan Antonio Barceló for their invaluable advice during the development of this work.  

The author was supported by Spanish government predoctoral grant BES-2015-074055 (project MTM2014-57769-C3-1-P).

  \bibliographystyle{myplainurl}
  \bibliography{references}

\end{document}